\title[Volumes of Subvarieties]{Volumes of Subvarieties of Complex Ball Quotients \\and Sparsity of Rational Points}
\author{Soheil Memariansorkhabi}\address{Dept. of Mathematics, Purdue University, West Lafayette, IN 47907, USA.}
\email{smemaria@purdue.edu}
\theoremstyle{plain}
 \newtheorem{theorem}{Theorem}[section]
\theoremstyle{definition} 
 \newtheorem{remark}[theorem]{Remark}
\newtheorem{Lemma}[theorem]{Lemma}
\newtheorem{Proposition}[theorem]{Proposition}
\newtheorem{Corollary}[theorem]{Corollary}
\theoremstyle{definition}
\newtheorem{Remark}[theorem]{Remark}
\newtheorem{definition}[theorem]{Definition}
\newcommand{\R}{\mathbb{R}} % reals
\newcommand{\Z}{\mathbb{Z}} % integers
\newcommand{\Q}{\mathbb{Q}} % rationals
\newcommand{\C}{\mathbb{C}} % complex
\newcommand{\Sn}{\mathbb{S}}
\newcommand{\Proj}{\mathbb{P}}
\newcommand{\Xb}{\overline{X}}
\newcommand{\dis}{\displaystyle}
\newcommand{\del}{\partial}
\newcommand{\delb}{\bar{\partial}}
\newcommand{\bi}{\big}
\newcommand{\la}{\langle}
\newcommand{\ra}{\rangle}
\newcommand{\tr}{\operatorname{tr}}
\newcommand{\sy}{\operatorname{sys}(X) }
\newcommand{\inj}{\operatorname{inj}_x(X)}
\setlist[enumerate,1]{label=(\roman*)}
\let\origmaketitle\maketitle
\def\maketitle{
  \begingroup
  \def\uppercasenonmath##1{} % this disables uppercasing title
  \let\MakeUppercase\relax % this disables uppercasing authors
  \origmaketitle
  \endgroup
}
\begin{document}
\maketitle
\begin{abstract}
Let $X=\Gamma \backslash \mathbb{B}^{n} $ be an $n$-dimensional complex ball quotient by a torsion-free non-uniform lattice $\Gamma$ whose parabolic subgroups are unipotent. We prove that the volumes of subvarieties of $X$ are controlled by
the systole of $X,$ which is the length of a shortest closed geodesic of $X$.

There are a number of arithmetic and geometric consequences: the systole of $X$ controls the growth rate of rational points on $X,$ uniformly in the field of definition. Also, we obtain effective global generation and very ampleness results for multiples of the canonical bundle $K_{\overline{X}},$ where $\overline{X}$ is the toroidal compactification of $X.$ These follow from the bound we find for the Seshadri constant of $K_{\overline{X}}$ in terms of the systole.   

\end{abstract}

\section{Introduction}

Let $X=\Gamma \backslash \mathbb{B}^{n} $ be an $n$-dimensional complex ball quotient by a torsion-free lattice $\Gamma.$ The complex ball has an intrinsic Hermitian metric (Bergman metric) which induces a K\"ahler form on $X.$ This K\"ahler form also induces a K\"ahler form on a subvariety $V$ of $X.$ The volume of $V$ with respect to the induced K\"ahler form on $X$ will be called the induced K\"ahler volume and will be denoted by $\operatorname{vol}_{X}(V).$ 

The main goal of this paper is to find a uniform lower bound  
 for the induced K\"ahler and canonical volumes of all subvarieties of a non-compact ball quotient $X$ in terms of a geometric quantity of $X:$
\renewcommand*{\thetheorem}{\Alph{theorem}}
 \begin{theorem}\label{Hyperbolic Vol Int}(Theorem \ref{Bound Hyperbolic Volume})  Let $X=\Gamma \backslash \mathbb{B}^{n} $ be a complex ball quotient by a torsion-free non-uniform lattice $\Gamma$ whose parabolic stabilizers are unipotent. Let $V\subset X$ be an irreducible  subvariety of dimension $m>0.$ Then,
\begin{align} \label{hyb vol}
          \operatorname{vol}_{X}(V)
     &\ge
      \frac{ (4 \pi)^m}{m!}\sinh^{2m}\bi(\sy /2\bi),         
\end{align}
where $\operatorname{vol}_{X}(V)$ is the volume of $V$ induced by the Bergman metric on $V$ and $\sy$ is the length of a shortest closed geodesic on $X.$
 \end{theorem}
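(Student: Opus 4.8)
The plan is to combine a monotonicity (Lelong--Wirtinger type) lower bound for the volume of a complex analytic set inside a metric ball of $\mathbb{B}^n$ with a geometric argument, using the toroidal compactification, showing that a positive‑dimensional subvariety of $X$ cannot be swallowed by the cuspidal (thin) part. Write $\omega$ for the K\"ahler form of the Bergman metric, normalised so that every totally geodesic complex curve $\mathbb{B}^1\subset\mathbb{B}^n$ carries the area form of constant curvature $-1$, and let $B(p,\rho)\subset\mathbb{B}^n$ denote the metric ball of radius $\rho$ about $p$.

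\emph{Step 1 (a local estimate on $\mathbb{B}^n$).} First I would establish that for an irreducible $m$‑dimensional complex analytic subset $\widetilde V\subset\mathbb{B}^n$ passing through a point $p$ and every $\rho>0$,
\[
  \int_{\widetilde V\cap B(p,\rho)}\frac{\omega^{m}}{m!}\ \ge\ \int_{\mathbb{B}^{m}\cap B(p,\rho)}\frac{\omega^{m}}{m!}\ =\ \frac{(4\pi)^{m}}{m!}\,\sinh^{2m}\bi(\rho/2\bi),
\]
where $\mathbb{B}^{m}\subset\mathbb{B}^n$ is a totally geodesic complex submanifold through $p$ (the closed form of the right‑hand side is the direct computation of the volume of a geodesic ball in complex hyperbolic $m$‑space in the chosen normalisation). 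The inequality itself is the Hwang--To monotonicity for subvarieties of bounded symmetric domains; I would reprove it by writing $\omega=i\partial\bar\partial\varphi$ for a potential $\varphi$ which on geodesic spheres about $p$ depends only on the distance to $p$, restricting $(i\partial\bar\partial\varphi)^{m}$ to $\widetilde V$, and applying Wirtinger together with a Lelong--Jensen comparison, the model case $\widetilde V=\mathbb{B}^m$ giving equality.

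\emph{Step 2 (descent to $X$).} Let $\pi\colon\mathbb{B}^n\to X$ be the universal cover, $x$ a smooth point of $V$, $\tilde x\in\pi^{-1}(x)$, and $\widetilde V$ the component of $\pi^{-1}(V)$ through $\tilde x$. If $\rho\le\inj$, then $\pi$ is injective on $B(\tilde x,\rho)$, so $\pi$ restricted to $\widetilde V\cap B(\tilde x,\rho)$ is a biholomorphism onto a subset of $V$; combined with Step 1 this yields
\[
  \operatorname{vol}_{X}(V)\ \ge\ \operatorname{vol}_{X}\bi(V\cap B(x,\rho)\bi)\ \ge\ \frac{(4\pi)^{m}}{m!}\,\sinh^{2m}\bi(\inj/2\bi).
\]
Hence it suffices to exhibit a single point $x\in V$ at which the injectivity radius is at least $\sy$; since $\sinh$ is increasing, any point with $\inj\ge\sy$ forces the displayed lower bound to dominate $\frac{(4\pi)^m}{m!}\sinh^{2m}(\sy/2)$, which is the assertion.

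\emph{Step 3 (the subvariety is not cuspidal), and the main obstacle.} Producing such a point is where the hypotheses ``non‑uniform'' and ``parabolic stabilisers unipotent'' enter, and this is the step I expect to be delicate. Two facts drive it. First, every hyperbolic element of $\Gamma$ moves \emph{every} point of $\mathbb{B}^n$ by at least its translation length, hence by at least $\sy$; therefore $\inj$ is small only because of parabolic elements, i.e.\ only in the standard horoball neighbourhoods of the cusps, whose depth one can normalise in terms of $\sy$. Second, $V$ cannot be contained in such a cusp neighbourhood: in the toroidal compactification $\Xb$ the boundary $D=\Xb\setminus X$ is a disjoint union of abelian varieties with negative normal bundle, so each boundary component is exceptional (Grauert); a cusp neighbourhood of $X$ is therefore a punctured disc bundle over an abelian variety containing no compact positive‑dimensional subvariety, and the closure in $\Xb$ of a positive‑dimensional $V\subset X$ meets $X$ and so cannot lie in a single such bundle — consequently $V$ meets the complement of the cusp neighbourhoods. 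At such a point $\inj$ is controlled below by $\sy$ (up to the cusp‑truncation constant). The genuine obstacle is quantitative: one must (i) choose the horoball truncation so that points just outside it are still moved by $\ge\sy$ by the unipotent parabolics — which is exactly where the ``unipotent'' hypothesis is used to pin down the cusp geometry — and (ii) recover the full radius $\sy$ rather than $\sy/2$, for instance by replacing the single embedded ball in Step 2 by an embedded tube around a minimising geodesic segment of $V$, or by locating $x$ in the part of $V$ that is far from all shortest geodesics; carrying this out cleanly, with all constants depending on $\sy$ alone, is the crux of the argument.
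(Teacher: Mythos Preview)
Your strategy is the paper's strategy: Hwang--To at a point of $V$ with large injectivity radius, plus an argument that such a point exists because $V$ cannot sit entirely in the cuspidal region. The differences are in how Step~3 is executed and in your two self--identified obstacles, both of which the paper dissolves rather than overcomes.

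\textbf{Step 3.} The paper does not invoke Grauert or exceptionality. It works with the Siegel model and the height coordinate $u$: the function $-u$ is plurisubharmonic on $\Sn$ and invariant under the Heisenberg stabiliser $\Gamma_\infty$ (this is exactly where the unipotence hypothesis is used). Define the \emph{thin part} around a cusp as the set where some element of its parabolic stabiliser has displacement $<\sy/2$. Two short lemmas show (a) the thin parts at distinct cusps are disjoint, and (b) the monodromy of each thin component lies in the corresponding parabolic stabiliser; both are proved by observing that a product of parabolics based at different cusps (or more generally a parabolic times anything not in its stabiliser) is hyperbolic, hence has displacement $\ge\sy$, forcing a contradiction with the triangle inequality. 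Granting (a) and (b), $-u$ descends to a well--defined plurisubharmonic function on each thin component; if the closure $\overline V\subset\Xb$ avoided the thick part and were not contained in $D$, the maximum principle would force $-u$ to be constant on $\overline V$, contradicting that $\omega_X=-2i\partial\bar\partial\log u$ restricts nontrivially. This is shorter and more robust than the disc--bundle/contraction picture you sketch.

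\textbf{Your obstacle (i)} evaporates with this definition: the thin part is \emph{defined} by the displacement condition, not as a horoball, so at any thick point every parabolic displacement is $\ge\sy/2$ by fiat, and every hyperbolic displacement is $\ge\sy$ automatically. No truncation constant has to be tuned.

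\textbf{Your obstacle (ii)} is not present in the paper's argument: once $V$ meets the thick part, Hwang--To applied at a single such point already yields the bound as stated. No tubes, no ``far from shortest geodesics'' refinement. The shortfall you perceive between $\sy$ and $\sy/2$ is a normalisation artefact: your Step~1 formula has $\sinh^{2m}(\rho/2)$ for a ball of radius $\rho$, whereas the paper quotes Hwang--To in the form $\sinh^{2m}(r)$ with $r=\operatorname{inj}_x$; these differ by a convention (curvature normalisation, or whether $\operatorname{inj}_x$ denotes the radius of the largest embedded ball or half the minimal displacement), not by a missing geometric idea. Fix the convention and the numbers line up without any extra work.
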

 When $X$ is a compact ball quotient, inequality \eqref{hyb vol} was proved by Hwang and To \cite{hwang1999seshadri}. Their inequality bounds the induced K\"ahler volume of subvarieties in terms of the injectivity radius of $X.$ 
 While the injectivity radius is a positive real number for compact $X$, for non-compact  $X$, it is zero, as the injectivity radius becomes arbitrarily small near the cusps.  We generalize their inequality for non-compact $X,$ under a mild assumption on the parabolic stabilizer of 
$\Gamma,$ by replacing the injectivity radius with half of the systole. For compact 
$X,$ the injectivity radius is half of the systole. However, for non-compact 
$X,$ the systole is non-zero (see Proposition \ref{positive sys}) and can be estimated using the absolute value of the trace of hyperbolic elements in $\Gamma$ (see Lemma \ref{length trace}). 
 
Note that the assumption that the parabolic stabilizers of the lattice are unipotent is mild, as it holds for every neat lattice, and any lattice $\Gamma$ admits a finite index subgroup with this property (see Selberg's lemma \cite[page 331]{ratcliffe2006foundations}). With this assumption, the variety $X$ admits a smooth projective toroidal compactification $\Xb$ whose boundary divisor $D=\Xb \setminus X$ is a disjoint union of abelian varieties with ample conormal bundle (\cite{MOK}). Bakker and Tsimerman \cite{bakker2018kodaira} proved that if the uniform depth of cusps of $X$ is sufficiently large, then the canonical bundle of the toroidal compactification  $K_{\Xb}$ is ample. We prove in Theorem \ref{systolDepth} that the systole of $X$ bounds the uniform depth of cusps from below. Therefore, if  $\sy$ is sufficiently large, then $K_{\Xb}$ is ample.  
   
   For a subvariety $V\subset \Xb$ of dimension $m>0,$ we denote the degree of $V$ with respect to the line bundle $K_{\Xb}$ by $\operatorname{deg}_{\Xb}(V):$
   $$
    \operatorname{deg}_{\Xb}(V):= K_{\Xb}^m \cdot V.
   $$
Also, we study the canonical volume of a subvariety $V$ which is an intrinsic quantity of $V$ and a priori does not depend on the ambient space $\Xb.$ Let $V'$ be a smooth variety birational to $V$ with a canonical bundle $K_{V'}.$ The canonical volume of the variety $V$ is

$$\widetilde{\operatorname{vol}}_{V}:=\limsup\limits_{b\rightarrow \infty}\dfrac{h^0(V', bK_{V'})}{b^m/m!},$$
which does not depend on the choice of $V'.$ In particular, if $V$ is an integral curve, that is, a reduced and irreducible algebraic curve, then the canonical volume of $V$ is  $2g-2,$ where $g$ denotes the genus of the curve.  The canonical volume of $V$ measures the asymptotic growth rate of the pluricanonical linear series $|bK_{V'}|.$  The canonical volume is a non-negative real number and it is positive if and only if the linear system
$|bK_{V'}|$ embeds $V'$ birationally in a projective space for a large enough $b$, i.e., $V$ is of general type. 

    We prove that the systole controls both the canonical volume of $V$ and its degree with respect to $K_{\Xb}$ in the following sense:

       \begin{theorem} \label{LargeIntersection Int}(Theorem \ref{LargeIntersection}$+$Theorem \ref{algebriac Volume}) With the same assumption on $X$ as Theorem \ref{Hyperbolic Vol Int}, let $\Xb$ be the toroidal compactification of $X$ and let $V\subset \Xb$ be a subvariety of dimension $m>0$ with $X\cap V\neq \varnothing.$ Suppose that $\sy  \ge 4\ln\bi(5n+(8\pi)^4\bi).$     
       Then the following inequalities hold:  
       \begin{align*} %\label{Large Can Vol and deg}
       \widetilde{\operatorname{vol}}_{V}
       &>
     (\frac{m}{4\pi})^m e^{m\sy/16},
       \\
    \operatorname{deg}_{\Xb}(V)
    &>
    (\frac{n}{4\pi})^m  e^{m\sy/16 }.
\end{align*}

\end{theorem}

Note that systole cannot decrease in a cover and for every $X$ there exists a finite cover $X'$ such that $\operatorname{sys}(X')$ is sufficiently large (see Proposition \ref{sys-growth}). As a byproduct of Theorem~\ref{LargeIntersection Int}, we observe that in a cofinal normal tower of coverings of $X$ (see Definition~\ref{cofinal def}), the canonical volume of subvarieties can be made arbitrarily large by going sufficiently far up the tower (see Proposition \ref{sys-growth}).

\subsection*{Application I: sparsity of rational points}
A smooth toroidal compactification
$\Xb$ of $X$ can be defined over a number field $F$ provided that $\Gamma$ is neat and arithmetic (see \cite{Faltings}). Combining Theorem \ref{Hyperbolic Vol Int} and Theorem \ref{LargeIntersection Int} with the determinant method (in particular \cite[Theorem 3.4]{brunebarbe2022counting}), we get that $\sy$ controls the growth rate of rational points:
\begin{Corollary}(Corollary \ref{sparsity})\label{sparsity int} 
Suppose $\Xb$ is defined on the number field $F.$ Let $\epsilon$ be a positive number and $B$ any number such that $B\ge \epsilon[F:\Q].$
\begin{enumerate} 

    \item Let $L_1=K_{\Xb}+D.$ Then, there exists a constant $c_1$ depending on $X, F$ and $\epsilon$ such that:
\begin{align*}
         \#\Big\{
x\in X(F) \mid \operatorname{H}_{L_1}(x)\le B\Big\} \le c_1 B^{\delta},
     \end{align*}
where $$\delta=\frac{[F:\Q] n(n+3)}{ \sinh^{2}\bi(\sy /2\bi) (n+1) }(1+\epsilon),$$ 
and $H_{L_1}$ is the multiplicative height (see equation \eqref{Height def} for the definition of multiplicative height). 

\item Let $L_2=K_{\Xb}$ and assume that $\sy \ge 4\ln\bi(5n+(4\pi)^4\bi).$ Then, there exists a constant $c_2$ depending on $X, F$ and $\epsilon$ such that
\begin{align*}
         \#\Big\{
x\in X(F) \mid \operatorname{H}_{L_2}(x)\le B\Big\} \le c_2 B^{\delta},
     \end{align*}
where $$\delta=\frac{4\pi [F:\Q] (n+3)}{ e^{\sy/16}}(1+\epsilon),$$ 
and $H_{L_2}$ is the multiplicative height. 

\end{enumerate}
     
\end{Corollary}
Corollary \ref{sparsity int} tells us that if we fix $n$ and $[F:\Q],$ then the growth rate of $F$-rational points decreases as $\sy$ gets larger. This aligns with the philosophy in Diophantine geometry that geometric constraints naturally govern the arithmetic properties of a variety.   
\subsection*{Application II: effective very ampleness and Seshadri constant}

   Combining Theorem \ref{LargeIntersection Int} with the results in the adjunction theory proved by Angehrn-Siu \cite{angehrn1995effective}, Kollar \cite{kollar1997singularities} and Ein-Lazersfeld-Nakamaye \cite{ein1996zero} gives effective results in global generation, very ampleness and separation of jets:
\begin{Corollary}\label{Very amplness Int}(Corollary \ref{Effective 1}) With the same $X$ and $\Xb$ as Theorem \ref{LargeIntersection Int}, suppose that 
$$\sy 
\ge 
20\operatorname{max}\{n\ln\bi((1+2n+n!)(n+1)\bi), \ln\bi(5 n+(8\pi)^4\bi) \}.$$
Then, the following hold \begin{enumerate} 
    \item $2K_{\Xb}$ is globally generated and very ample modulo $D.$
    \item $3K_{\Xb}$ is very ample.
\end{enumerate}
\end{Corollary}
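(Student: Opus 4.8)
\medskip

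\noindent\emph{Proof strategy.} The hypothesis already supplies everything needed from the earlier results: $\sy\ge 20\ln\bi(5n+(8\pi)^4\bi)$ is stronger than the hypothesis of Theorem~\ref{LargeIntersection Int}, and by Theorem~\ref{systolDepth} together with the ampleness criterion of Bakker--Tsimerman the canonical bundle $K_{\Xb}$ is ample; we use this throughout. The plan is to feed the degree bound of Theorem~\ref{LargeIntersection Int} into the effective adjunction theorems of Angehrn--Siu, Koll\'ar, and Ein--Lazarsfeld--Nakamaye. In the form we quote them, these assert: if $L$ is ample on the smooth projective $n$-fold $\Xb$ and $x$ (resp.\ $x$ and $y$) are points such that
\[
L^{\dim Z}\cdot Z \;>\; \bi((1+2n+n!)(n+1)\bi)^{\dim Z}
\]
for every positive-dimensional irreducible subvariety $Z\subset\Xb$ through $x$ (resp.\ through $x$, through $y$, and through both), then $K_{\Xb}+L$ is globally generated at $x$, separates $x$ from $y$, and separates tangent vectors at $x$. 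So it suffices to verify these inequalities for $L=K_{\Xb}$ and $L=2K_{\Xb}$ at the relevant points.

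\medskip

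\noindent\emph{Points of $X$.} If $x\in X$ then every positive-dimensional $Z\ni x$ meets $X$, so Theorem~\ref{LargeIntersection Int} gives $K_{\Xb}^{\dim Z}\cdot Z=\operatorname{deg}_{\Xb}(Z)>\bi(\tfrac{n}{4\pi}\,e^{\sy/16}\bi)^{\dim Z}$. From $\sy\ge 20n\ln\bi((1+2n+n!)(n+1)\bi)$ one gets $e^{\sy/16}\ge\bi((1+2n+n!)(n+1)\bi)^{5n/4}$, hence $\tfrac{n}{4\pi}e^{\sy/16}>(1+2n+n!)(n+1)$, and a fortiori the inequality holds with $2K_{\Xb}$ in place of $K_{\Xb}$. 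Applying the adjunction theorems with $L=K_{\Xb}$ shows $2K_{\Xb}=K_{\Xb}+K_{\Xb}$ is globally generated at every point of $X$ and separates points and tangent vectors on $X$; this is the part of~(1) concerning $\Xb\setminus D$. Applying them with $L=2K_{\Xb}$ shows $3K_{\Xb}=K_{\Xb}+2K_{\Xb}$ is globally generated along $X$, separates points of $X$ from one another, separates a point of $X$ from any other point of $\Xb$, and separates tangent vectors at points of $X$ — here one uses that $2K_{\Xb}$ is big and that its local positivity is controlled at the $X$-point, which is all the Angehrn--Siu type construction needs to produce a section separating that point from any other.

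\medskip

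\noindent\emph{Points of $D$.} Write $D=\bigsqcup_i A_i$ with each $A_i$ an abelian variety with ample conormal bundle $N^{\vee}_{A_i/\Xb}$. Adjunction gives $\mathcal{O}_{A_i}=K_{A_i}=(K_{\Xb}+A_i)|_{A_i}$, hence $K_{\Xb}|_{A_i}=-A_i|_{A_i}=N^{\vee}_{A_i/\Xb}$ is ample on $A_i$. By the classical theorem on abelian varieties, $2(K_{\Xb}|_{A_i})$ is globally generated and $3(K_{\Xb}|_{A_i})$ is very ample on $A_i$. To transfer this to a neighbourhood of $A_i$ in $\Xb$ one uses the restriction sequences
\[
0\to\mathcal{O}_{\Xb}(mK_{\Xb}-A_i)\to\mathcal{O}_{\Xb}(mK_{\Xb})\to\mathcal{O}_{A_i}(mK_{\Xb}|_{A_i})\to 0
\]
for $m=2,3$, together with their analogues twisted by $\mathcal{I}_x^2$, $\mathcal{I}_x\cdot\mathcal{I}_y$, and $\mathcal{O}(-A_i-A_j)$. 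Since $mK_{\Xb}-A_i=K_{\Xb}+\bi((m-1)K_{\Xb}-A_i\bi)$, Kawamata--Viehweg vanishing reduces everything to the single input that $K_{\Xb}-A_i$ (and hence the more positive classes $2K_{\Xb}-A_i$, $2K_{\Xb}-A_i-A_j$, $2(K_{\Xb}-A_i)$) is nef and big on $\Xb$; this is where the explicit local model of the toroidal compactification near a cusp and the lower bound of Theorem~\ref{systolDepth} on the cusp depth re-enter, large depth forcing $K_{\Xb}$ to dominate $A_i$ numerically. Granting this, $H^1(\Xb,mK_{\Xb}-A_i)=0$, the restriction maps on global sections are surjective, and combining with the abelian-variety statements: $2K_{\Xb}$ is globally generated along $D$, completing~(1); and $3K_{\Xb}$ is globally generated everywhere, separates two points of the same $A_i$, separates the tangent directions to $A_i$ and (via a section vanishing to order one along $A_i$) the normal direction, and separates points of distinct boundary components as well as a boundary point from an $X$-point. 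With the previous paragraph this gives~(1) and~(2).

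\medskip

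\noindent\emph{Main difficulty.} Matching the constant $(1+2n+n!)(n+1)$ to the precise normalisation of whichever version of the Angehrn--Siu/Koll\'ar/Ein--Lazarsfeld--Nakamaye bounds one cites, and propagating $e^{\sy/16}\ge\bi((1+2n+n!)(n+1)\bi)^{5n/4}$, is routine. The real obstacle is on the boundary: ampleness of $K_{\Xb}$ by itself does not force $K_{\Xb}-A_i$ to be nef, since a curve $C$ lying mostly in $X$ but meeting $A_i$ can have $K_{\Xb}\cdot C$ close to $0$ while $A_i\cdot C>0$. One must show that when the systole is large the cusp is deep enough that $(K_{\Xb}-A_i)\cdot C\ge 0$ for every curve $C$; this is precisely the place where the geometry of the toroidal boundary and Theorem~\ref{systolDepth} do the essential work, after which the remainder is Kawamata--Viehweg vanishing and the classical projective geometry of abelian varieties.
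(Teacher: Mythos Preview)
Your overall architecture matches the paper's: feed the degree bound of Theorem~\ref{LargeIntersection} into Angehrn--Siu (for global generation), Koll\'ar (for point separation), and Ein--Lazarsfeld--Nakamaye (for tangent/jet separation) at points of $X$; handle $D$ via the restriction sequences $0\to mK_{\Xb}-D\to mK_{\Xb}\to mK_{\Xb}|_D\to 0$, using that $(m-1)K_{\Xb}-D$ is ample (Bakker--Tsimerman, once the depth and hence the systole is large) so that Kodaira vanishing gives surjectivity on $H^0$, and then invoke the classical facts that $2L$ is base-point-free and $3L$ very ample for any ample $L$ on an abelian variety. Your ``main difficulty'' paragraph correctly isolates the one genuinely geometric input.

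There is, however, a real gap in the step where you separate a point $x\in X$ from a point $y\in D$ (needed for part~(2)). You write that ``local positivity is controlled at the $X$-point, which is all the Angehrn--Siu type construction needs''; but Koll\'ar's separation theorem, in the form the paper quotes, explicitly requires the degree lower bound for every subvariety through $x_1$ \emph{or} $x_2$, and for $V\subset D$ the systole gives you no control on $K_{\Xb}^m\cdot V$. Your restriction-sequence paragraph does not plug this either: surjectivity of $H^0(\Xb,3K_{\Xb})\to H^0(A_i,3K_{\Xb}|_{A_i})$ lifts sections \emph{from} $A_i$, but does not by itself manufacture a section vanishing on $A_i$ yet nonzero at a prescribed $x\in X$. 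The paper's fix (Proposition~\ref{Freeness of 2k-D}) is a \emph{second} application of Angehrn--Siu, now with $L=K_{\Xb}-D$ rather than $K_{\Xb}$: one checks, by the same expansion-and-sign argument as in Theorem~\ref{LargeIntersection}, that $(K_{\Xb}-D)^m\cdot V$ is still exponentially large in $\sy$ for every $V\ni x$ (necessarily $V\not\subset D$), whence $K_{\Xb}+(K_{\Xb}-D)=2K_{\Xb}-D$ has a section nonzero at $x$; viewed as a section of $2K_{\Xb}$ it vanishes along all of $D$ and separates $x$ from every boundary point. With this one correction your outline is essentially the paper's proof.
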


Another implication of Theorem \ref{LargeIntersection Int} is the following bound on the Seshadri constant of $K_{\Xb}$:   
       \begin{Corollary}(Theorem 
 \ref{Seperation of Jets}) \label{sepration of jets and seshadri cons int}
           Suppose that $$\sy 
\ge 
20\operatorname{max}\{n\ln\bi((1+2n+n!)(n+s)\bi), \ln\bi(5 n+(8\pi)^4\bi) \}.$$
Then $2K_{\Xb}$ separates any $s$-jets and in particular for every $x\in X,$ we have
$$\epsilon(K_{\Xb},x) \ge s/2,$$
where $\epsilon(K_{\Xb},x)$ is the Seshadri constant of $K_{\Xb}$ at $x,$ as defined in Definition \ref{DefSesh}.

       \end{Corollary}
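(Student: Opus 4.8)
The plan is to deduce both conclusions from the intersection-number lower bound of Theorem~\ref{LargeIntersection Int} together with an effective jet-separation criterion of Ein--Lazarsfeld--Nakamaye \cite{ein1996zero} type (compare Angehrn--Siu \cite{angehrn1995effective}, Koll\'ar \cite{kollar1997singularities}). First I would record that the hypothesis on $\sy$ is far stronger than what is needed for $K_{\Xb}$ to be ample: Theorem~\ref{systolDepth} bounds the uniform depth of the cusps from below in terms of $\sy$, so the Bakker--Tsimerman criterion \cite{bakker2018kodaira} gives that $K_{\Xb}$ is ample, hence nef and big. The jet-separation input I will invoke is: on a smooth projective $Y$ of dimension $n$, if $L$ is nef and big and $x\in Y$ is a point such that
\[
   \bi(L^{d}\cdot Z\bi)^{1/d} \;>\; C(n,s) \qquad\text{for every irreducible } Z\subseteq Y,\ x\in Z,\ d=\dim Z>0,
\]
where $C(n,s)$ is the explicit constant produced by the cutting-down induction in \cite{ein1996zero} (dominated, in the notation of the hypothesis, by $(1+2n+n!)(n+s)$), then $K_Y+L$ separates every $s$-jet at $x$.

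I then apply this with $Y=\Xb$, $L=K_{\Xb}$ and $x\in X$. The restriction to $x\in X$ is essential and is why the statement is phrased only for such $x$: a positive-dimensional subvariety through a boundary point may be contained in $D$, a disjoint union of abelian varieties, and then misses $X$, so Theorem~\ref{LargeIntersection Int} would not apply. But if $x\in X$ and $Z\subseteq\Xb$ is irreducible with $x\in Z$ and $\dim Z=d>0$, then $Z\cap X\neq\varnothing$; and the hypothesis $\sy\ge 20\ln\bi(5n+(8\pi)^4\bi)$ (implied by the $\max$) puts us in the regime $\sy\ge 4\ln\bi(5n+(8\pi)^4\bi)$ of Theorem~\ref{LargeIntersection Int}, which yields
\[
   \deg_{\Xb}(Z)=K_{\Xb}^{d}\cdot Z \;>\; \bi(\tfrac{n}{4\pi}\bi)^{d}\,e^{d\sy/16},
   \qquad\text{so}\qquad
   \bi(K_{\Xb}^{d}\cdot Z\bi)^{1/d} \;>\; \tfrac{n}{4\pi}\,e^{\sy/16}.
\]

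What remains is the elementary inequality $\tfrac{n}{4\pi}e^{\sy/16}\ge (1+2n+n!)(n+s)$ under $\sy\ge 20\max\{n\ln((1+2n+n!)(n+s)),\ \ln(5n+(8\pi)^4)\}$: when the first term attains the maximum, $e^{\sy/16}\ge\bi((1+2n+n!)(n+s)\bi)^{5n/4}$ and one uses $5n/4-1\ge\tfrac14$ together with $(1+2n+n!)(n+s)\ge(8\pi)^{4/n}$ valid in that regime; when the second term attains the maximum, $e^{\sy/16}\ge(5n+(8\pi)^4)^{5/4}$ while $(1+2n+n!)(n+s)<(5n+(8\pi)^4)^{1/n}$, and again the exponent gap closes because $n\ge1$. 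Combining with the inequality of the previous paragraph and feeding the result into the criterion with $L=K_{\Xb}$ gives that $K_{\Xb}+K_{\Xb}=2K_{\Xb}$ separates every $s$-jet at every $x\in X$ (the version for clusters of points, if one wants the full $s$-jet ampleness over $X$, follows from the analogous criterion for $0$-dimensional subschemes).

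For the Seshadri bound I would use the standard comparison between jet separation and Seshadri constants (see \cite{ein1996zero}): writing $j(pL,x)$ for the largest $k$ with $pL$ separating $k$-jets at $x$, the product map on sections shows $j$ is superadditive in $p$, so $\epsilon(L,x)=\lim_{p\to\infty}j(pL,x)/p\ge \tfrac12 j(2L,x)$; with $L=K_{\Xb}$ and $j(2K_{\Xb},x)\ge s$ this gives $\epsilon(K_{\Xb},x)\ge s/2$. The main obstacle is not geometric but bookkeeping: pinning down the precise numerical form of the cutting-down procedure in \cite{ein1996zero} (the origin of the $n!$ in the hypothesis), checking that it yields separation of genuine $s$-jets rather than mere base-point freeness, and then carrying out the two-case elementary estimate uniformly over $1\le d\le n$ and over all $s\ge 0$.
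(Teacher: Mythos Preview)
Your overall architecture matches the paper's: bound $K_{\Xb}^{m}\cdot V$ from below via Theorem~\ref{LargeIntersection Int} for every $V$ through $x\in X$, feed this into the Ein--Lazarsfeld--Nakamaye machinery to get $s$-jet separation for $2K_{\Xb}$, and then pass to the Seshadri constant by the standard superadditivity/limit comparison (the paper cites \cite[Proposition~2.2.5]{bauer2009primer} for this last step, which is exactly your argument).

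There is, however, one structural step the paper makes explicit and you skip. The Ein--Lazarsfeld--Nakamaye criterion actually invoked (their \cite[Theorem~4.4]{ein1996zero}, quoted in the paper as Theorem~\ref{Laz}) is not purely a degree hypothesis: it requires as input a positive integer $m_0$ with $m_0L$ already globally generated, and the degree threshold $(b+m_0 m+\tfrac{n!}{(n-m)!})^{n-m}(n+s)^n$ depends on that $m_0$. The paper therefore runs a two-stage argument: first it applies Angehrn--Siu (Theorem~\ref{Siu Angehrn}) together with the same degree bounds to conclude that $2K_{\Xb}$ is globally generated (this is Theorem~\ref{Freeness}), and only then applies ELN with $L=K_{\Xb}$, $m_0=2$, $b=1$. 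Your black-box ``criterion with constant $C(n,s)$ dominated by $(1+2n+n!)(n+s)$'' is not a statement you can cite directly from \cite{ein1996zero}; the factor $2n$ inside $1+2n+n!$ is precisely the $m_0 m\le 2n$ coming from the freeness of $2K_{\Xb}$, so you are implicitly using that fact without having established it. The fix is immediate---your degree bounds already dominate $\binom{n+1}{2}^m$, so Angehrn--Siu gives freeness of $2K_{\Xb}$---but it should appear as an explicit preliminary step. Once that is in place, your elementary two-case estimate and the paper's more direct comparison $K_{\Xb}^m\cdot V\ge n^m(1+2n+n!)^n(n+s)^n\ge (1+2m+\tfrac{n!}{(n-m)!})^{n-m}(n+s)^n$ accomplish the same thing.
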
       
As the boundary divisor $D$ is a disjoint union of abelian varieties, the adjunction formula gives that $K_{\Xb|D}$ is isomorphic to the conormal bundle $O_{D}(-D),$ which is always an ample bundle due to \cite{MOK}. It is classical that every ample line bundle on an abelian variety determines a positive definite Hermitian form on that abelian variety. Let $\operatorname{sys}(D)$ be the length of a shortest closed geodesic on $D$ with respect to the metric induced by the ample line bundle $K_{\Xb|D}.$
Assuming that both $\sy$ and $\operatorname{sys}(D)$ are sufficiently large relative to $n,$ we get that the bicanonical bundle $2K_{\Xb}$ is very ample:
\begin{Corollary}\label{full Very amplness Int}(Corollary \ref{full Very amplness}) With the same $X$ and $\Xb$ as Theorem \ref{Hyperbolic Vol Int}, suppose that $\operatorname{sys}(D)>2\sqrt{2n/\pi}$ and that $$\sy 
\ge 
20\operatorname{max}\{n\ln\bi(5n(1+2n+n!)\bi), \ln\bi(5 n+(8\pi)^4\bi) \}.$$  
 Then, for every $x\in \Xb$ we have
 $$
    \epsilon(K_{\Xb},x)
    \ge 
   2n,
 $$
      and in particular $2K_{\Xb}$ is very ample.  
  
\end{Corollary}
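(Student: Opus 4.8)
The plan is to deduce the very ampleness of $2K_{\Xb}=K_{\Xb}+K_{\Xb}$ from a uniform lower bound on the Seshadri constant. By the adjunction‑theoretic results of Angehrn--Siu, Koll\'ar and Ein--Lazarsfeld--Nakamaye already used for Corollary \ref{Effective 1} (and since $K_{\Xb}$ is nef, indeed ample, here by Theorem \ref{systolDepth} together with Bakker--Tsimerman), it is enough to prove
$$\epsilon(K_{\Xb},x)\ \ge\ 2n\qquad\text{for every }x\in\Xb .$$
Writing $\Xb=X\sqcup D$, I would bound $K_{\Xb}\cdot C\ge 2n\,\operatorname{mult}_x(C)$ for every irreducible curve $C$ through $x$, treating $x\in X$ and $x\in D$ separately; the role of the extra hypothesis on $\operatorname{sys}(D)$ is precisely to remove the ``modulo $D$'' from part (1) of Corollary \ref{Effective 1}.

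For $x\in X$ I would simply invoke Theorem \ref{Seperation of Jets} with $s=4n$: its hypothesis then reads
$$\sy\ \ge\ 20\max\{\,n\ln((1+2n+n!)(n+4n)),\ \ln(5n+(8\pi)^4)\,\}=20\max\{\,n\ln(5n(1+2n+n!)),\ \ln(5n+(8\pi)^4)\,\},$$
which is exactly the hypothesis of the present statement, so $2K_{\Xb}$ separates $4n$-jets at $x$ and hence $\epsilon(K_{\Xb},x)\ge 4n/2=2n$.

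Now let $x\in D$, lying on a component $D_i$, an abelian variety whose conormal bundle $O_{D_i}(-D_i)$ is ample; by adjunction $K_{D_i}=(K_{\Xb}+D_i)|_{D_i}=0$, so $K_{\Xb}|_{D_i}=O_{D_i}(-D_i)$. For a curve $C\subset D_i$ through $x$, I would run the flat analogue of the Hwang--To estimate: the polarization $O_{D_i}(-D_i)$ endows $D_i$ with a flat K\"ahler metric whose shortest closed geodesic has length $\ge\operatorname{sys}(D)$, and the monotonicity formula for the complex (hence minimal) curve obtained by lifting $C$ to $\C^n$ shows that its area inside a Euclidean ball of radius $\operatorname{sys}(D)/2$ --- which embeds in $D_i$ --- is at least $\operatorname{mult}_x(C)\cdot\pi(\operatorname{sys}(D)/2)^2$; since this area is at most $\int_C c_1(O_{D_i}(-D_i))=K_{\Xb}\cdot C$, one gets $\epsilon(K_{\Xb}|_{D_i},x)\ge\frac{\pi}{4}\operatorname{sys}(D)^2>\frac{\pi}{4}\cdot\frac{8n}{\pi}=2n$ from $\operatorname{sys}(D)>2\sqrt{2n/\pi}$. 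For a curve $C$ through $x$ with $C\not\subset D$ --- so $C$ meets $X$ --- I would re-examine the singular-metric and multiplier-ideal construction behind Theorem \ref{Seperation of Jets}, now centered at the boundary point $x$: it is local near $x$ and needs only enough positivity of $K_{\Xb}$ near $D$, which Theorem \ref{systolDepth} supplies transversally to $D$ (deep cusps) while the conormal polarization, controlled as above by $\operatorname{sys}(D)$, supplies it along $D$; combining the two as in the weight estimates of Theorem \ref{Seperation of Jets} should again give $K_{\Xb}\cdot C\ge 2n\,\operatorname{mult}_x(C)$.

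The main obstacle is this last case, $x\in D$ and $C\not\subset D$: one has to carry the Angehrn--Siu/Nadel machinery across the boundary divisor and keep precise track of the Mumford-type degeneration of the K\"ahler--Einstein metric on $K_{\Xb}$ near $D$, showing that the transverse positivity coming from the depth of the cusps and the horizontal positivity coming from $\operatorname{sys}(D)$ really do combine to the stated bound. Granting this, $\epsilon(K_{\Xb},x)\ge 2n$ for all $x\in\Xb$, and the very ampleness of $2K_{\Xb}$ follows from the cited adjunction results.
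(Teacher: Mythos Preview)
Your case split and the first two cases are exactly what the paper does: for $x\in X$ you invoke Theorem~\ref{Seperation of Jets} with $s=4n$ (the paper writes $s=2n$, but the hypothesis $5n(1+2n+n!)$ indeed matches $n+s=5n$, so your reading is the consistent one), and for $x\in D$ with $C\subset D_i$ you use the flat Hwang--To/Lazarsfeld estimate $\epsilon(K_{\Xb|D_i},x)\ge\frac{\pi}{4}\operatorname{sys}(D)^2$, which is the paper's Lemma~\ref{SeshOnD}.

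The case you flag as the main obstacle, $x\in D$ with $C\not\subset D$, is where you diverge. You propose to push the Angehrn--Siu/Nadel multiplier-ideal machinery through the Mumford degeneration of the K\"ahler--Einstein metric near $D$, combining transverse positivity from the cusp depth with horizontal positivity from $\operatorname{sys}(D)$. This is left as a sketch, and it is not clear it can be made to work without substantial further argument. The paper bypasses all of this with a two-line numerical computation: the systole hypothesis forces the uniform depth $d\ge 8\pi$ via Theorem~\ref{systolDepth}, so by Bakker--Tsimerman (Theorem~\ref{Tsi}) the class $K_{\Xb}-\bigl(\tfrac{(n+1)d}{4\pi}-1\bigr)D$ is nef. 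Since $x\in D\cap C$ but $C\not\subset D$, one has $D\cdot C\ge\operatorname{mult}_x(C)$, and hence
\[
K_{\Xb}\cdot C\ \ge\ \Bigl(\tfrac{(n+1)d}{4\pi}-1\Bigr)\,D\cdot C\ \ge\ \tfrac{nd}{4\pi}\,\operatorname{mult}_x(C)\ \ge\ 2n\,\operatorname{mult}_x(C).
\]
In particular $\operatorname{sys}(D)$ is irrelevant in this case; the transverse positivity you were looking for is already packaged numerically in Theorem~\ref{Tsi}. The very ampleness of $2K_{\Xb}$ then follows from Demailly's Seshadri-constant criterion rather than the Angehrn--Siu/Koll\'ar/Ein--Lazarsfeld--Nakamaye results you cite.
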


\subsection*{Previous results and comparison }
Besides the results mentioned above, the central purposes of our paper are the following technical advancements on the subject: 
\begin{itemize}
    \item[$\bullet$] Corollary \ref{sparsity int} does not follow from the main results of Ellenberg-Lawrence-Venkatesh \cite{ellenberg2023sparsity} or Brunebarbe-Maculan \cite{brunebarbe2022counting}
    or Chiu \cite{chiu2022arithmetic} on the growth rate of integral points. 
In general, bounding the growth rate of rational points on a quasi-projective variety is more difficult than bounding the growth rate of integral points. For example, on $X=\mathbb{P}_{F}^1\setminus \{0,1,\infty\},$ there are infinitely many $F$-rational points; however, there are only finitely many integral points on $X$ because of the famous theorem of Siegel (see \cite[Remark 3.3]{brunebarbe2022counting}).   \\
It is observed in \cite{ellenberg2023sparsity,brunebarbe2022counting} that if one has a control on the degree of all subvarieties, the bound on the growth rate of rational points improves in the strategy of Bombieri-Pila \cite{bombieri1989number} and Heath-Brown \cite{heath2002density}. However, to get the lower bound on the degree of subvarieties, they passed to an \'etale cover and this restricts them to get results only on the integral points, rather than rational points. The point is that when one pulls back rational points along finite \'etale maps on quasi-projective variety, the field of definition cannot be controlled, but for integral points, it can be.   
Our intrinsic approach has the advantage that it does not require passage to a cover to raise the degree of subvarieties and hence we can get the bound on the growth rate of rational points.

    \item[$\bullet$] Our results show that the hyperbolicity properties of a non-compact ball quotient can be controlled by its systole, and hence the injectivity radius of the interior and depth of cusps need not to be dealt with separately (see for example \cite{Wong} for the other approach). In our paper, this is achieved by proving that the systole gives a lower bound for both the uniform depth of cusps (Theorem \ref{systolDepth}) and injectivity radius of the thick part (section \ref{thin-thick decomposition}).

    \item[$\bullet$] Our results depend intrinsically on $X,$ and it is not required to pass to a cover of $X$ to apply them. In particular, our results apply even in the case that $X$ is not a normal cover of another variety (see for example \cite[Theorem 5]{yeung2012tower} and \cite[Corollary 1.6.]{di2014seshadri} in which the passage to a cover of $X$ is required. Indeed, these results are about a cover of $X$ rather than $X$ itself). Note that when $\Gamma$ is a maximal lattice, $X$ is not a finite cover of other locally symmetric domain.

    \item[$\bullet$] The systole can be estimated by estimating the absolute values of traces of hyperbolic elements (see Lemma~\ref{length trace}). Especially when the coverings arise from congruence relations (see Proposition~\ref{Picard Modular Surface Ex} for an example), estimating the traces of hyperbolic elements is possibly within reach, and our results then allow one to conclude the increasing hyperbolicity behavior.
    
    This kind of increasing hyperbolicity behavior has been extensively studied for locally symmetric spaces in towers of coverings arising from congruence relations, with high ramification at the cusps (see for example \cite{nadel1989nonexistence,brunebarbe2020increasing,brunebarbe2016strong,abramovich2018level,abramovich2017level,rousseau2016hyperbolicity}). However, our approach does not require high ramification of cusps, and our results apply even in cases where some cusps do not ramify. A typical example is the covering of modular curves $X_{1}(p)\to X(1),$
which has \((p-1)/2\) cusps that do not ramify (see \cite[page 26]{shimura1971introduction} and \cite{ogg1972rational} for more details). We see their higher-dimensional analogue in Proposition~\ref{Picard Modular Surface Ex}, where the systole tends to infinity as \(p \to \infty\).
\end{itemize}

The following are some of the previous effective results for pluricanonical bundles:
\begin{enumerate}
\item Yeung proved in \cite{yeung2012tower} that for a quasi-projective variety $M,$ there exists a 
finite normal cover $M'$ such that $L^2$-holomorphic sections of $K_{M'}$ give rise to a holomorphic immersion of $M'$ into some projective space.  
    \item Di Cerbo and Lombardi proved in \cite[Corollary 1.6.]{di2014seshadri} that for a smooth projective $X$ with ample $K_{X}$ and large fundamental group, there exists a normal cover $X'$ such that  $2K_{X'}$ is very ample. In \cite[Theorem 1.3]{di2015effective}, Di Cerbo and Di Cerbo also proved an effective result for the multiple of the log canonical bundle $K_{\Xb}+D$ of the toroidal compactification of ball quotient: If $m\ge (\operatorname{n}+1)^3,$ then $m(K_{\Xb}+D)$ is ample modulo $D.$

    \item Hwang proved in \cite[Proposition 2.1.]{hwang2005number} that for a non-compact ball quotient $X,$ the sections of the line bundle $\frac{n^2+3n+4}{2}K_{X^*}$ separate any two points of Siu-Yau compactification $X^*.$
    \item  For various compact locally symmetric spaces, effective very ampleness has been studied in \cite{ 
 hwang1999seshadri, yeung2018very, yeung2001effective,yeung2017canonical,yeung2000very, wang2015effective}.
    
\end{enumerate}

\subsection*{Strategy of proof}  
To prove Theorem \ref{Hyperbolic Vol Int}, we decompose $X=\Gamma \backslash \mathbb{B}^{n} $ into two disjoint parts. The first part is the thin neighborhoods around the cusps. This part consists of all points on $X$ which have a displacement less than $\sy/2$ with respect to a parabolic element in $\Gamma.$
The second part is the complement of the first part, which we call the thick part. In Proposition \ref{radofThick} we prove that the thick part is not empty. Moreover, We prove in Theorem~\ref{thick} that every subvariety of $\Xb$ which is not entirely contained in the boundary $D$ contains a point from the thick part. Therefore, using the inequality proved by Hwang-To 
we conclude that the volume of a subvariety $V$ of $X$ is controlled by the systole.

To prove Theorem \ref{LargeIntersection Int}, the main new ingredient needed is that we show that the uniform depth of cusps is controlled by the systole in Theorem \ref{systolDepth}. Combining this with an inequality proved in \cite{memarian2022positivity}, it follows that the canonical volume of a subvariety of $\Xb$ which intersects with $X$ is also controlled by the systole. The bounds on the degree of a subvariety with respect to $K_{\Xb}$ follow from Theorem \ref{Hyperbolic Vol Int} and the previous result of Bakker-Tsimerman in \cite{bakker2018kodaira}, which is restated in Theorem \ref{Tsi}.

\section*{Acknowledgements}I would like to thank my advisor Jacob Tsimerman for many enlightening discussions and for his comments on an earlier draft of this paper.  
I also thank Sai-kee Yeung for useful conversations around the effective very ampleness problem.  
Finally, I am grateful to the anonymous referee for several helpful comments that clarified the text and the proofs in this paper.

\numberwithin{theorem}{section}

\section{Background and notation}\label{back}

In this section, we collect
the necessary background and notation which will be used frequently in the sequel. We refer to \cite{goldman1999complex, parker1998volumes, kapovich2022survey,bakker2018kodaira}
for a much fuller account.
\subsection{Geometry of complex ball quotients} \label{Geometry of complex ball quotients}
The complex unit ball $\mathbb{B}^n$ is defined as 
$$\mathbb{B}^n= \{z\in \C^n\ |\ |z|^2<1\}.$$
 The complex ball $\mathbb{B}^n$ has an intrinsic Hermitian metric called Bergman metric. The holomorphic isometry group of $\mathbb{B}^n$ with respect to this metric is the projective unitary group
$$G:= \mathrm{PU}(n,1)=\dfrac{\mathrm{U}(n,1)}{\operatorname{Z}(\mathrm{U}(n,1))},$$
where the center $\operatorname{Z}(\mathrm{U}(n,1))$ can be identified with the circle group $\{\mu I:|\mu|=1\}.$
The group $G$ acts transitively on $\mathbb{B}^n$ and acts doubly transitively on the boundary sphere $\partial \mathbb{B}^n.$ The stabilizer of the center of $\mathbb{B}^n$ is $\mathrm{U}
(n).$ Every isometry $g \in G$ is continuous
on the closed ball $\overline{\mathbb{B}^n}$ and it follows from Brouwer's fixed point theorem that $g$ has a fixed point on the closed ball $\overline{\mathbb{B}^n}$. Moreover, if there is no fixed point on $\mathbb{B}^n,$ there can be at most two fixed points on the boundary sphere $\partial \mathbb{B}^n.$ Accordingly,
an isometry $g \in G$ is classified as follows:
\begin{enumerate}
   \item Elliptic: $g$ has a fixed point $z$ in $\mathbb{B}^n$. After conjugating $g$ via $h\in G$ which
sends $z$ to $0$,
$hgh^{-1} \in \mathrm{U}(n),$ 
and therefore all eigenvalues of $g$ are roots of unity. 
    \item Parabolic: $g$ has a unique fixed point in $\overline{\mathbb{B}^n}$ and this fixed point is on the boundary $\partial \mathbb{B}^n.$ Equivalently,
    $$\dis\inf_{z \in \mathbb{B}^n}\ d(z, gz) = 0,$$
where $d(\cdot,\cdot)$ denotes the Bergman metric. This infimum is not realized for a parabolic $g.$
    \item Hyperbolic: $g$ has exactly two fixed points in $\overline{\mathbb{B}^n}$ and both are in $\partial \mathbb{B}^n.$ In
particular, $g$ preserves the unique geodesic connecting these two fixed points in $\mathbb{B}^n$ and acts as a translation along this geodesic. This geodesic is called the axis of $g.$ The length of a hyperbolic isometry $g\in G$ is $$\ell(g):=\dis\inf_{z \in \mathbb{B}^n}\ d(z, gz).$$
This infimum is not zero and is realized by any point on the axis of $g.$ The work of Chen-Greenberg on the conjugacy classification of elements of $\mathrm{U}(n,1)$ (see \cite[Theorem 3.4.1]{chen1974hyperbolic}) implies that a hyperbolic isometry $g$ has two eigenvalues $re^{i\theta}$ and $r^{-1}e^{i\theta}$ with $r>1$ and $n-1$ eigenvalues with norm $1.$  

\end{enumerate}

 Let $\Gamma \subset \mathrm{PU}(n,1)$ be a torsion-free lattice whose parabolic elements are unipotent (Selberg's lemma \cite[page 331]{ratcliffe2006foundations} tells us that every lattice in $\mathrm{PU}(n,1)$ has a finite index subgroup with this property). With this property, an element $g \in \Gamma$ is hyperbolic if and only if $g$ is semi-simple. Therefore, we will denote the set of the hyperbolic elements in $\Gamma$ by $\Gamma
 _s.$

 Let $X=\Gamma \backslash\mathbb{B}^n.$  The systole of $X$ is the length of a shortest closed geodesic with respect to the Bergman metric:  
 \begin{align} \label{sys def}
     \sy :
=
\dis\inf_{g \in \Gamma_{s}} \ell(g)
=\dis\inf_{g \in \Gamma_{s}}\{d(z, gz) | z \in \mathbb{B}^n \}. \end{align}
Equivalently, the systole of $X$ is the length of a shortest hyperbolic element in $\Gamma.$ The systole $\sy$ is always positive (see Proposition \ref{positive sys}) and the infimums in \eqref{sys def} are attained as minimums (see Remark \ref{inf is min}).

Consider $x \in X$ with stabilizer $\Gamma_x$ in $\Gamma.$ Choose a fiber $\tilde{x} \in \mathbb {B}^n.$ The injectivity radius of $x$ in $X$ is defined to be
$$\operatorname{inj}_{x}(X):=\frac{1}{2} \operatorname{inf}_{\gamma \in \Gamma\setminus \Gamma_x }d(\tilde{x},\gamma \cdot \tilde{x}),$$
which is independent of choice of $\tilde{x}.$ The injectivity radius of $X$ is $\operatorname{inj}(X):= \operatorname{inf}_{x\in X }\operatorname{inj}_{x}(X).$ In the case that $X$ is compact, $\Gamma$ only has semi-simple elements and hence $\sy=inj(X)/2.$ However, this relation does not hold for a non-compact $X$ because of the parabolic elements in $\Gamma. $

\begin{remark}\label{discontinous} Since \(\mathrm{PU}(n,1)\) acts on the unit ball \(\mathbb{B}^n\) by isometries, every lattice \(\Gamma\) in \(\mathrm{PU}(n,1)\) acts discontinuously: for every \(z\in\mathbb{B}^n\) there exists a neighborhood \(U\) of \(z\) such that
\[
\{\gamma\in\Gamma\mid\ \gamma U\cap U\neq\varnothing\}
\]
is finite. This fact is well known to hold for any discrete subgroup of the isometry group, but we will only use it for lattices (see for example \cite[Theorem 5.3.5]{ratcliffe2006foundations}).
 
\end{remark}

\subsection{Siegel domain model} \label{Siegel model}
The half-plane model of the 1-dimensional complex ball quotient is generalized by the Siegel domain model in higher dimensions. In horospherical coordinates, the Siegel domain of (complex) dimension $n$ is $\Sn = \mathbb{C}^{n-1} \times \mathbb{R} \times \mathbb{R}^{+}$. The points of $\Sn$ are written as $(\zeta, v, u) \in \mathbb{C}^{n-1} \times \mathbb{R} \times \mathbb{R}^{+}$. The boundary of $\Sn$ is $H_0 \cup \{q_\infty\}$, where $q_\infty$ is a distinguished point at infinity and $H_0 = \mathbb{C}^{n-1} \times \mathbb{R} \times \{0\}$. The point with coordinates $(0, 0, 0) \in H_0$ will be denoted by $q_0$. 

To describe the topology of the boundary, we first introduce neighborhoods of $q_\infty$. A neighborhood of $q_\infty$ is $q_\infty$ together with all points $(\zeta,v,u) \in \Sn$ with $u > \tilde{u}$ for some $\tilde{u} > 0$. For a general boundary point $q \in H_0$, there exists $g \in \mathrm{PU}(n,1)$ such that $q = g(q_\infty)$. The neighborhoods of $q$ are then defined as the images under $g$ of neighborhoods of $q_\infty$. This construction provides a basis for the topology of the boundary $H_0 \cup \{q_\infty\}$.

We follow \cite{parker1998volumes} in describing 
$\mathrm{PU}(n,1)$ via the embedding of the Siegel domain as a paraboloid in $\Proj(\mathbb{C}^{n,1}).$ To do so, we should choose a Hermitian form of signature $(n,1)$ on $\Proj(\mathbb{C}^{n,1}).$ Let  
$$
J_0:=
\begin{bmatrix}
    0&0&1\\
    0&I_{n-1}& 0\\
    1&0&0\\
\end{bmatrix},
$$
 and  $Q(z,w):=w^* J_{0} z,$
where $z$ and $w$ are column vectors in $\Proj(\mathbb{C}^{n,1})$ and $^{*}$ is the Hermitian transpose, that is, transpose the matrix and complex conjugate each
of its entries.

Consider the map  $\psi: \overline{\Sn} \to \Proj(\mathbb{C}^{n,1})$ given by

\begin{align} \label{Psi map}
    \psi: (\zeta,v,u) \longrightarrow \begin{bmatrix}
        \frac{1}{2}(-||\zeta||^2-u+i v)\\
        \zeta \\
        1
    \end{bmatrix}, \  \mbox{for} \ (\zeta,v,u) \in \overline{\Sn} \backslash \{q_\infty\}; \ 
    \psi: q_\infty \longrightarrow 
    \begin{bmatrix}
        1\\
        0 \\
        0
    \end{bmatrix}.
\end{align}
The image of this map is the set of points in $\Proj(\C^{n,1})$, where the
Hermitian form $Q$ is negative. Also $\psi$ is a homeomorphism of $\partial \Sn$ onto the set of
points where $Q$ is zero.

Let $\mathrm{U}(Q)$ be the unitary group preserving the Hermitian form $Q$ (see \cite[Section~3]{falbel2009products}):  
\begin{align} \label{def: U(Q)}
    \mathrm{U}(Q) := \{\, h \in \mathrm{GL}_{n+1}(\C) \;\mid\; Q(hz,hw) = Q(z,w) 
\ \text{for all } z,w \in \C^{n+1} \,\}.
\end{align}
The condition $Q(hz,hw) = Q(z,w)$
is equivalent to $h^* J_0 h = J_0$,  
so $h \in \mathrm{U}(Q)$ if and only if $h^{-1} = J_0 h^* J_0.$  
In particular, $h \in \mathrm{GL}_{n+1}(\C)$ lies in $\mathrm{U}(Q)$ if and only if $h$ and its inverse have the form  
\begin{align}\label{MatrixPu}
h = \begin{bmatrix}
a & \tau^* & b\\
\alpha & A & \beta \\
c & \delta^* & e
\end{bmatrix},
\quad
h^{-1} = \begin{bmatrix}
\bar{e} & \beta^* & \bar{b}\\
\delta & A^* & \tau\\
\bar{c} & \alpha^* & \bar{a}
\end{bmatrix},
\end{align}
where $A$ is an $(n-1)\times (n-1)$ matrix, $a,b,c,e \in \C$, and
$\tau,\delta,\alpha,\beta$ are column vectors in $\C^{n-1}$ (see \cite[page 438]{parker1998volumes}) . 

The projective unitary group is defined by  
\[
\mathrm{PU}(Q) := \mathrm{U}(Q) \big/ \operatorname{Z}(\mathrm{U}(Q)),
\]  
where the center $\operatorname{Z}(\mathrm{U}(Q))$ can be identified with the circle group
$\{\mu I : |\mu|=1\}$.  
Every element of $\mathrm{PU}(Q)$ is represented by a matrix in $\mathrm{U}(Q)$, uniquely determined up to multiplication by a scalar $\mu \in \C$ with $|\mu|=1$.  

The holomorphic isometry group of $\mathbb S^n$ with respect to the
Bergman metric is $\mathrm{PU}(Q)$. Its action is given by matrix
multiplication of a representative in $\mathrm{U}(Q)$ on the
paraboloid model of the Siegel domain, embedded in
$\Proj(\C^{n,1})$ via the map~\eqref{Psi map}.

The following lemma easily follows: 
\begin{Lemma} \label{FixPt} Let $\gamma$ be an element of $\mathrm{PU}(Q).$  
    \begin{enumerate}
        \item (\cite[page 7]{parker1997uniform}) If $\gamma$ swaps $q_\infty$ and $q_0,$ then it has a representative $h\in\mathrm{U}(Q)$ of the form
$$
h=\begin{bmatrix}
    0&0&1/ \overline{c}\\
    0&A&0\\
    c&0&0
\end{bmatrix},
$$
where $A\in \mathrm{U}(n-1)$ and $c\in \C.$ Consequently, $h$ acts on the horospherical coordinates $(\zeta,u,v)$ via:
$$T_{h}:(\zeta,u,v)\longrightarrow \Big(\dfrac{-2 A \zeta }{c( ||\zeta||^2+u-iv)},\dfrac{-4v }{\bi|c\bi|^2 \bi| ||\zeta
||^2+u-iv \bi|^2 },  \dfrac{4 u }{\bi|c\bi|^2 \bi| ||\zeta
||^2+u-iv \bi|^2 }\Big).$$
\item If $\gamma$ fixes both $q_\infty$ and $q_0$, then it must have a representative $h \in\mathrm{U}(Q)$ of the form
$$
h=\begin{bmatrix}
    a&0&0 \\
    0&A& 0 \\
    0& 0 & 1/\bar{a}
\end{bmatrix},
$$
where $A\in \mathrm{U}(n-1)$ and $a\in \C.$

    \end{enumerate}
\end{Lemma}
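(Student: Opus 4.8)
The lemma is a direct linear‑algebra computation, so the plan is to translate the cusp conditions into conditions on the columns of $h$, then impose membership in $\operatorname{PU}(Q)$ to pin down the remaining entries. Recall that $q_\infty=\psi(\infty)=\begin{bmatrix}1\\0\\0\end{bmatrix}$ and $q_0=\psi(0)=\begin{bmatrix}0\\0\\1\end{bmatrix}$. Writing $h$ in the general form \eqref{MatrixPu}, applying $h$ to $q_\infty$ (resp.\ $q_0$) just reads off the first column $\begin{bmatrix}a\\\alpha\\c\end{bmatrix}$ (resp.\ the last column $\begin{bmatrix}b\\\beta\\e\end{bmatrix}$). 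The statement ``$h$ swaps the cusps'' means, projectively, that $hq_\infty$ is a multiple of $q_0$ and $hq_0$ is a multiple of $q_\infty$; this forces $a=0$, $\alpha=0$, $e=0$, $\beta=0$, with $b,c\neq 0$. Likewise ``$h$ fixes both cusps'' forces $\alpha=c=0$ and $b=\beta=0$. In each case this already gives the advertised zero‑pattern, leaving scalars $a,b,c,e$, the block $A\in\operatorname{GL}(n-1,\C)$, and the column vectors $\tau,\delta$ still undetermined.

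Next I would impose that (a representative of) $h$ lies in $U(Q)$, i.e.\ $h^{*}J_0h=J_0$, equivalently $hJ_0h^{*}=J_0$, equivalently $h^{-1}=J_0h^{*}J_0$ as recorded just before \eqref{MatrixPu}. Multiplying out $hJ_0h^{*}$ for the partially reduced $h$ and comparing with $J_0$ entry by entry: the $(1,1)$ entry gives $\|\tau\|^{2}=0$, so $\tau=0$; the central $(n-1)\times(n-1)$ block gives $AA^{*}=I$, so $A\in U(n-1)$; the central off‑diagonal entries then give $A\delta=0$, so $\delta=0$; and the $(1,3)$ entry gives the scalar relation $b\bar c=1$ in the swap case and $a\bar e=1$ in the fixing case, i.e.\ $b=1/\bar c$ (resp.\ $e=1/\bar a$). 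This yields precisely the two matrix forms in the statement; the first is the content of \cite[page 7]{parker1997uniform}.

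Finally, for the explicit transformation formula in part (1), I would apply the reduced matrix $h=\begin{bmatrix}0&0&1/\bar c\\0&A&0\\c&0&0\end{bmatrix}$ to $\psi(\zeta,v,u)=\begin{bmatrix}\tfrac12(-\|\zeta\|^{2}-u+iv)\\\zeta\\1\end{bmatrix}$, getting $\begin{bmatrix}1/\bar c\\A\zeta\\\tfrac{c}{2}(-\|\zeta\|^{2}-u+iv)\end{bmatrix}$, and then rescale so the last coordinate equals $1$. The first $n-1$ coordinates become $\zeta'=\dfrac{-2A\zeta}{c(\|\zeta\|^{2}+u-iv)}$; writing the new top coordinate as $\tfrac12(-\|\zeta'\|^{2}-u'+iv')$, multiplying numerator and denominator by $\overline{\|\zeta\|^{2}+u-iv}$, and separating real and imaginary parts (using $\|A\zeta\|=\|\zeta\|$ to simplify $\|\zeta'\|^{2}$) produces the stated expressions for the new $v$‑ and $u$‑components. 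There is no genuine obstacle in the argument — every step is a short computation — so the only thing demanding care is bookkeeping: keeping straight the horospherical coordinate order $(\zeta,v,u)$ used to define $\psi$, and being consistent about whether one works with $h^{*}J_0h=J_0$ or $hJ_0h^{*}=J_0$ (both hold for elements of $U(Q)$).
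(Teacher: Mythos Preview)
Your argument is correct and is exactly the straightforward linear-algebra computation one expects here; the paper itself omits the proof (it simply says the lemma ``easily follows'' and cites \cite{parker1997uniform} for part~(1)), so there is no alternative approach to compare against. Your caution about the coordinate order is well placed: the lemma's header ``$(\zeta,u,v)$'' is a typo for the paper's convention $(\zeta,v,u)$, and the displayed image $\bigl(\cdot,\,-4v/\!\cdots,\,4u/\!\cdots\bigr)$ is indeed consistent with $(\zeta',v',u')$, matching your computation.
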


\subsection{Bergman metric}
For any pair of points $z_1=(\zeta_{1},v_1,u_1)$ and $z_2=(\zeta_{2},v_2,u_2)$ in $\Sn,$ the Bergman metric is given by:
\begin{align} \label{metric}
    d(z_1,z_2)
    =
    2\cosh^{-1} \Big(\dfrac{1}{2 \sqrt{u_1 u_2}}\bi| ||\zeta_1-\zeta_2||^2 
    +u_1+u_2
    +i v_1-i v_2
    +2 i \operatorname{Im} \la \zeta_1, \zeta_2 \ra
    \bi|\Big),
\end{align}
where  $\la.,.\ra$  denotes the standard positive definite Hermitian form on $\C^{n-1}$. Since $\cosh^{-1}(x)$ is increasing, the following lower bound can be obtained for the metric :

\begin{align} \label{metricBound} 
    d\bi((\zeta_1,v_1,u_1),(\zeta_{2},v_2,u_2)\bi)
    \ge 
    2\cosh^{-1} \Big(\dfrac{|u_1+u_2|}{2 \sqrt{u_1 u_2}}\Big)
\end{align}
The holomorphic sectional curvature of this metric is $-1$ and the sectional curvature of this metric varies on $[-1,-\frac{1}{4}]$ (see \cite{goldman1999complex}). It follows that the holomorphic bisectional curvature of this metric is bounded above by $-\frac{1}{2}$ because the holomorphic bisectional curvature always can be written as the sum of two sectional curvatures.

\subsection{Toroidal compactification}
The complex ball quotient $X$ has a unique toroidal compactification $\Xb,$ which is a smooth projective variety (see \cite{MOK}). The boundary divisor of this compactification $D:=\Xb\setminus X$ is a disjoint union of abelian varieties with ample conormal bundle. The K\"ahler form of the Bergman metric on $\Sn$ is given by 
\begin{align}
    \omega_{\Sn}:= -2i\del\delb\log(u)
\end{align}
(see \cite[Lemma 2.1]{bakker2018kodaira}). Let  $\omega_{X}$ be the K\"ahler form induced by the K\"ahler form $\omega_{\Sn}.$ It follows from Mumford's work on the singular Hermitian metric \cite{mumford1977hirzebruch} that the Bergman metric on $X$ extends as a good Hermitian metric to $\Xb.$  Integration against $\omega_{X}$ on the open part represents (as a current) a multiple of the first Chern class
\begin{align} \label{ChernForm}
    \operatorname{c}_1(K_{\Xb}+D)= \frac{1}{2\pi} \frac{n+1}{2}[\omega_{X}] \in H^{1,1}(\Xb, \R),
\end{align}

where $K_{\Xb}$ is the canonical bundle of $\Xb$ (see \cite{bakker2018kodaira}). 
\subsection{Stabilizer of cusps}
We denote the parabolic stabilizer
of $q_{\infty}$ in $G$ by
$G_{\infty}.$ 
With our choice of Hermitian form, the matrices corresponding to elements of $G_{\infty}$ are upper triangular. There is an equivalent way to identify these matrices:
\begin{Lemma}(\cite{parker1998volumes})\label{Stab of inifinty} Let $\gamma$ be an element of $\mathrm{PU}(Q).$ Let $h\in \mathrm{U}(Q)$ be a representative of $\gamma$ written in the form \eqref{MatrixPu}. Then, $\gamma$ fixes $q_{\infty}$ if and only if the $c$ entry of $h$ is $0.$ 
    
\end{Lemma}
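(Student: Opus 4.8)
The plan is to use the matrix form \eqref{MatrixPu} for $h \in \operatorname{PU}(Q)$ together with the explicit matrix for the inverse, and simply track what fixing $q_\infty$ means at the level of the homogeneous coordinate vector $\psi(\infty) = (1,0,0)^t$. First I would recall that $h$ fixes the cusp $q_\infty$ precisely when $h \cdot \psi(\infty)$ is a scalar multiple of $\psi(\infty)$ in $\Proj(\C^{n,1})$; since $\psi$ is a homeomorphism onto the locus $\{Q=0\}$ and $\psi(\infty) = [1:0:0]$, this is equivalent to $h$ sending the line spanned by $(1,0,0)^t$ to itself. So the whole statement reduces to a one-line linear-algebra computation.

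Second, I would carry out that computation. Applying $h$ in the form \eqref{MatrixPu} to the column vector $(1,0,0)^t$ gives the first column of $h$, namely $(a, \alpha, c)^t$. For this to be proportional to $(1,0,0)^t$ we need $\alpha = 0$ and $c = 0$; conversely if $c = 0$ then I must check that $\alpha = 0$ follows automatically from the unitarity constraint $h^{-1} = J_0 h^* J_0$. Reading off the bottom row of $h^{-1}$ from \eqref{MatrixPu}, it is $(\bar c, \alpha^*, \bar a)$; on the other hand $J_0 h^* J_0$ has bottom row obtained by conjugate-transposing $h$ and multiplying by $J_0$ on both sides, which sends the first column $(a,\alpha,c)^t$ of $h$ to the bottom row, appropriately permuted. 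Matching these: the relation forces the bottom row of $h^{-1}$, i.e. $(\bar c, \alpha^*, \bar a)$, to equal (a permutation of) the conjugate of the first column of $h$; concretely $c = 0$ in $h$ corresponds to the first entry of the bottom row of $h^{-1}$ vanishing, and working through $J_0 h^* J_0$ one sees the relevant relation is that the $(n+1,1)$-entry of $h^{-1}$ equals $\bar c$ while simultaneously $h h^{-1} = I$ forces $c = 0 \implies$ the off-diagonal block $\alpha$ of $h$ is killed against $A^*$, hence $\alpha = 0$ since $A \in \operatorname{GL}(n-1,\C)$. Thus $c=0$ alone already puts $h$ in upper-triangular form and in particular the first column is $(a,0,0)^t$, which is the desired proportionality.

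Concretely, the cleanest route is: from $h^{-1} = J_0 h^* J_0$ one has, comparing the $(1,1)$ entries of $h h^{-1} = I$ after substituting the explicit forms in \eqref{MatrixPu}, the identity $a\bar e + \tau^* \delta + b \bar c = 1$ and similar relations; but more directly, the $(n+1,1)$ entry of $h^{-1}$ is $\bar c$, and the condition "$h$ fixes $q_\infty$" says $h(1,0,0)^t \sim (1,0,0)^t$, i.e. $(a,\alpha,c)^t \sim (1,0,0)^t$. The forward direction ($h$ fixes $q_\infty \Rightarrow c = 0$) is then immediate. For the reverse, $c = 0$ gives first column $(a,\alpha,0)^t$; plugging $c=0$ into the unitarity relations from $J_0 h^* J_0 \cdot h = I$ and reading the appropriate block shows $A^* \alpha = 0$, and since $A$ is invertible (it is the $A^*$ block of the invertible matrix $h^{-1}$, or one invokes that $A \in \operatorname{U}(n-1)$ up to the scalar structure), we get $\alpha = 0$, so the first column is $(a,0,0)^t \sim (1,0,0)^t$ and $h$ fixes $q_\infty$.

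The main obstacle — really the only subtle point — is the reverse implication: showing that the vanishing of the single entry $c$ already forces $\alpha = 0$ and hence genuine fixing of the cusp, rather than merely preserving the boundary sphere. This is where one genuinely needs the pseudo-unitarity relation $h^{-1} = J_0 h^* J_0$ rather than just the shape of $h$; everything else is bookkeeping with \eqref{MatrixPu}. I expect this to be a short paragraph citing \cite{parker1998volumes} for the explicit relations among the blocks $a, A, \alpha, c, \dots$, since those are already recorded there; the lemma is then essentially a restatement extracting the single coordinate condition.
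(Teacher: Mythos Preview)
Your approach matches the paper's: apply $h$ to $(1,0,0)^t$ to get $(a,\alpha,c)^t$, so fixing $q_\infty$ projectively is equivalent to $c=0$ and $\alpha=0$, and then use $h^{-1}h=I$ with the explicit $h^{-1}$ from \eqref{MatrixPu} to deduce that $c=0$ already forces $\alpha=0$. One correction: the relation you want is not $A^*\alpha=0$ (the $(2,1)$ block of $h^{-1}h$ with $c=0$ reads $\delta a+A^*\alpha=0$, and invertibility or unitarity of $A$ is not yet established), but rather the $(n{+}1,1)$ entry of $h^{-1}h$, namely $\bar c\,a+\alpha^*\alpha+\bar a\,c=0$; setting $c=0$ gives $|\alpha|^2=0$ and hence $\alpha=0$ directly, with no appeal to $A$.
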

\begin{proof}
Note that 
\begin{align*}
    h\cdot q_{\infty} 
    =
    \begin{bmatrix}
a& \tau^*& b\\
\alpha &A & \beta \\
c& \delta^* & e
\end{bmatrix}
\begin{bmatrix}
        1\\
        0 \\
        0
    \end{bmatrix}
    = 
    \begin{bmatrix}
        a\\
        \alpha  \\
        c
    \end{bmatrix},
\end{align*}
and therefore $\gamma$ fixes $q_{\infty}$ projectively if and only if $c=0$ and $\alpha=0.$ Note that if the $c$ entry of $h$ is $0,$ the multiplication of the matrix of $h$ and $h^{-1}$ in the form \eqref{MatrixPu} yields that $\alpha$ (and also $\delta$) must be $0.$  
    \end{proof}

The group $G_{\infty }$ is generated by Heisenberg isometries $I_{\infty}$ and a one-dimensional torus $T$. Heisenberg isometries consist of Heisenberg Rotations $\mathrm{U}(n-1)$ and Heisenberg translations $\mathfrak{N}$. Heisenberg Rotations $\mathrm{U}(n-1)$ act on $\zeta$-coordinates of $\Sn$ in the usual way, namely by linear isometries preserving the standard Hermitian form. Heisenberg translations $\mathfrak{N} \cong \C^{n-1} \times \mathbb{R}$ act on $\zeta$ and $v$ coordinates of  $\Sn$ via
$$(\tau, t): (\zeta, v,u) \longrightarrow \bi(\zeta+\tau, v+t+ 2i\operatorname{Im}\la\tau, \zeta\ra, u\bi).$$

The element $(0,t)\in \mathfrak{N}$ will be called the vertical translation by $t$, and the subgroup of $G_\infty$ generated by vertical translations, which is isomorphic to $\mathbb{R}$, will be denoted by $V_{\infty}.$ The vertical translation $V_{\infty}$ is the center of $G_{\infty}$ and the quotient $V_{\infty}\backslash I_{\infty}$ is isomorphic to the group of unitary transformations of 
$\C^{n-1}.$

A Heisenberg translation $(\tau, t)\in \mathfrak{N}$ fixing $q_{\infty}$ has a representative $g_{\infty}\in\mathrm{U}(Q)$ and a Heisenberg translation $(\sigma, s)\in \mathfrak{N}$ fixing $q_{0}$ has a representative $g_{0}\in\mathrm{U}(Q),$ where 
\begin{align} \label{Matrix for Hisenberg}
g_{\infty}
=
\begin{bmatrix}
    1&-\tau^*&-(|\tau|+it)/2\\
    0&I&\tau\\
    0&0&1\\
\end{bmatrix},
\ \ \ 
g_{0}=\begin{bmatrix}
    1&0&0\\
    \sigma&I&0\\
    -(|\sigma|+is)/2&-\sigma^*&1\\
\end{bmatrix}.
\end{align}
 With our assumption on $\Gamma,$ all parabolic stabilizers of $q_{\infty}$ in $\Gamma,$ i.e., $\Gamma_{\infty}:=\Gamma \cap G_{\infty}$ are Heisenberg translations.

The following statement is classical, but we include it here for completeness.
\begin{Lemma}\label{existance of shortest vertical translation} The group $\Gamma_{\infty}$ contains a shortest vertical translation.

\end{Lemma}
\begin{proof}
Note that the group of Heisenberg translations is $\mathfrak{N} \cong \mathbb{C}^{\,n-1} \times \mathbb{R}$. When $n=1$, all Heisenberg translations are vertical, so we may assume $n \ge 2$. If $g_1 = (\tau_1, t_1)$ and $g_2 = (\tau_2, t_2)$, then their product in the Heisenberg translation is
$
g_1 g_2 = \big(\tau_1 + \tau_2,\; t_1 + t_2 + 2\, \operatorname{Im} \la \tau_1, \tau_2 \ra \big).
$
Therefore, their commutator is
$
[g_1, g_2] = (0,\; 4\, \operatorname{Im} \la \tau_1, \tau_2 \ra),
$
which is a vertical translation. Note that Heisenberg translations form a non-abelian (2-step nilpotent) group when $n \geq 2$. Since $\Gamma_{\infty}$ must contain $2n-1$ generators, it necessarily contains nontrivial commutators, which correspond to vertical translations.

Moreover, because the lattice is discrete, there exists a neighborhood of the identity containing no nontrivial lattice elements. This implies the existence of a shortest nontrivial vertical translation in the parabolic stabilizer.
\end{proof}

The following fact is also well known, but we include a proof for completeness:

\begin{Lemma}\label{hyperbolic stab} 
There is no hyperbolic element of $\Gamma$ fixing $q_{\infty}$. 
\end{Lemma}

\begin{proof} 
Suppose there exists a hyperbolic element $h\in \Gamma$ fixing $q_{\infty}$. 
 Let $g$ be a vertical translation fixing $q_{\infty}.$ Using \eqref{MatrixPu}, Lemma \ref{Stab of inifinty} and \eqref{Matrix for Hisenberg}, we can choose representatives $\widetilde{h}$ and $\widetilde{g}$ of $h$ and $g$ in $\mathrm{U}(Q)$ given by
\[
\widetilde{g}=
\begin{bmatrix}
1& 0& -it/2\\
0 & I & 0 \\
0& 0 & 1
\end{bmatrix}, \quad
\widetilde{h}=
\begin{bmatrix}
a& \tau^*& b\\
0 & A & \beta \\
0& 0 & e
\end{bmatrix}, \quad
\widetilde{h}^{-1}=
\begin{bmatrix}
\bar{e}& \beta^*& \bar{b}\\
0 & A^* & \tau\\
0& 0 & \bar{a}
\end{bmatrix},
\]
where $t \in \R,$ $A$ is an $(n-1)\times (n-1)$ matrix, $a,b,e \in \C$, and 
$\tau, \beta$ are column vectors in $\C^{n-1}$. 
Since $\widetilde{h}$ and $\widetilde{h}^{-1}$ are inverses, we have $a \bar{e} = 1$, and as $h$ is hyperbolic, $|a| \neq 1$. A direct computation shows that for every $m \in \Z$, 
\[
\widetilde{h}^m \widetilde{g} \, \widetilde{h}^{-m}
= \begin{bmatrix}
1 & 0 & -i |a|^m t/2\\
0 & I & 0 \\
0 & 0 & 1
\end{bmatrix},
\]
so that if $|a| > 1$ taking $m \to \infty$ (or if $|a| < 1$ taking $m \to -\infty$) leads to $\widetilde{h}^m \widetilde{g} \, \widetilde{h}^{-m} \to I$, 
contradicting the fact that $\Gamma$ is discrete.

\end{proof}

\subsection{Neighborhood of cusps}\label{Neighborhood of cusps}

A horoball centered at $q_{\infty}$ with height $\tilde{u}$ is the open set
\[
B_{\infty}(\tilde{u}) := \{ (\zeta,v,u) \in \Sn \mid u > \tilde{u} \}.
\]
The height coordinate $u$ on $\Sn$ is invariant under the action of Heisenberg rotations $\mathrm{U}(n-1)$ and Heisenberg translations $\mathfrak{N}$, and hence the horoball is invariant under these groups.

Two points on $\partial \Sn$ are considered equivalent if they lie in the same $\Gamma$-orbit. A cusp of $X$ is the equivalence class of a point on $\partial \Sn$ fixed by a parabolic element of $\Gamma$. Thus, the cusps of $X$ are in one-to-one correspondence with the $\Gamma$-orbits of parabolic fixed points on $\partial \Sn$.

The complex ball quotient $X$ has finitely many cusps, and every cusp of $X$ can, possibly after a change of coordinates, be represented by the equivalence class of $q_\infty$. More precisely, if a cusp corresponds to the class of another point on $\partial \Sn$, the transitivity of the action of $\mathrm{PU}(n,1)$ on the boundary allows us to move that point to $q_\infty$, which amounts to replacing $\Gamma$ by its conjugate in $\mathrm{PU}(n,1)$.

Let $c_i$ be a cusp of $X$ corresponding to the equivalence class of $q_\infty$, and let $\Gamma_i \subset \Gamma$ denote the parabolic stabilizer of $c_i$. The smallest $\tilde{u}$ such that 
\[
\Gamma_i \backslash B_{\infty}(\tilde{u})
\] 
injects into $X$ is called the height of the cusp $c_i$, denoted by $u_i$. By Parker's generalization of Shimizu's lemma \cite{parker1998volumes}, for sufficiently large $\tilde{u}$ the set $\Gamma_i \backslash B_{\infty}(\tilde{u})$ injects into $X$. With our identification of $c_i$ with the $\Gamma$-orbit of $q_\infty$, we have $\Gamma_i = \Gamma_\infty$. For every $\tilde{u} < u_i$, the horoball around the cusp $c_i$ with height $\tilde{u}$ is defined as
\[
B_i(\tilde{u}) := \Gamma_i \backslash B_{\infty}(\tilde{u}).
\]

Let $t_i$ be the length of a shortest vertical translation in $\Gamma_i$ (see Lemma \ref{existance of shortest vertical translation}). 
The number $d_i = t_i / u_i$ is called the depth of the cusp $c_i.$ 
Note that this quantity is invariant under conjugating $\Gamma$, and hence is well-defined independently of the choice of coordinates.

\begin{definition}\label{UniDep}(\cite[Definition 3.7.]{bakker2018kodaira}) The uniform depth of the cusps of $X$ is the largest $d$ satisfying the following properties:
\begin{enumerate}
     \item for every $i$, $d\le d_i$ (this gives that $\Gamma_i \backslash B_{i}(t_{i}/d)$ injects into $X$).     
     \item all $\Gamma_i \backslash  B_{i}(t_{i}/d)$ are pairwise disjoint.  
\end{enumerate}

\end{definition}

\section{Systole and depth of cusps}\label{Sec2}
Let $X=\Gamma \backslash \mathbb{B}^n,$ where $\Gamma\subset \mathrm{PU}(n,1)$ is a torsion-free lattice whose parabolic stabilizers are unipotent. In this section we frequently use the content and notation introduced in section \ref{back}.
The main goal of this section is to prove Theorem \ref{systolDepth},
where we show that the systole $\sy$ bounds the uniform depth of cusps $d$ of $X$ from below.  

To see the relation between the systole and depth of cusps, we first prove that the length of a hyperbolic element in $\Gamma$ only depends on its non-unit eigenvalues:     
\begin{Proposition} \label{Length}
   Suppose $h\in \Gamma$ is a hyperbolic element. Let $\hat{h}\in\mathrm{U}(Q)$ be a representative of $h$    with non-unit eigenvalues $re^{i\theta}$ and $r^{-1} e^{i\theta}$. Then,
    $$\ell(h)= 2|\ln(r)|.$$
\end{Proposition} 

\begin{proof}
Since $h$ is hyperbolic, it fixes two distinct points $x_1$ and $x_2$ on the boundary $\partial \Sn.$ As $\mathrm{PU}(Q)$ acts doubly transitive on the boundary, there exists $P \in \mathrm{PU}(Q)$ such that $P(x_1)=q_0$ and $P(x_2)=q_\infty.$ Now we can write
\begin{align*}
    d(x, hx)= d(Px, PhP^{-1} Px)= d(x', PhP^{-1} x'), 
\end{align*}
where $x'= Px.$ Suppose $x'=(\zeta_{1}, v_1,u_1),$ and $PhP^{-1} x'= (\zeta_2, v_2, u_2).$ Since $PhP^{-1}$ fixes both $q_0$ and $q_\infty,$ it follows from Lemma \ref{FixPt} that it has a representative $\widetilde{h}\in\mathrm{U}(Q)$ such that 
$$\widetilde{h}=\begin{bmatrix}
    a&0&0 \\
    0&A& 0 \\
    0& 0 & 1/\bar{a}
    \end{bmatrix},$$
for a complex number $a$ and $A\in \mathrm{U}(n-1).$ Therefore, in horospherical coordinates, using the map \eqref{Psi map}, we obtain the following: 
\begin{align*}
    \widetilde{h}\cdot x' =\begin{bmatrix}
    a&0&0 \\
    0&A& 0 \\
    0& 0 & 1/\bar{a}
    \end{bmatrix}
    \begin{bmatrix}
        \frac{1}{2}(-||\zeta_{1}||^2-u_1+i v_1)\\
        \zeta_{1} \\
        1
    \end{bmatrix}
    =
    \begin{bmatrix}
        \frac{a}{2}(-||\zeta_{1}||^2-u_1+i v_1)\\
        A \zeta_{1} \\
        1/\bar{a}
    \end{bmatrix}.
\end{align*}
This gives $\zeta_2= \bar{a} A\zeta_1$ and
\begin{align} \label{firstcoe1}
    \frac{1}{2}(-||\zeta_2||^2-u_2+iv_2)
    =
    \frac{|a|^2}{2}(-||\zeta_1||^2-u_1+iv_1).
\end{align}
Therefore, $u_2= |a|^2 u_1$ and $v_2= |a|^2 v_1.$ Note that as conjugation does not change the eigenvalues, we have that $|a|^2=r^2 \ \mbox{or} \ \frac{1}{r^2}.$  On the other hand, inequality \eqref{metricBound} yields that
 \begin{align}\nonumber
 d\bi(x', PhP^{-1} x' \bi)
    &\ge 
    2\cosh^{-1} \Big(\dfrac{|u_1+u_2|}{2\sqrt{u_1 u_2}}\Big)\\ \nonumber
    &\ge
    2\cosh^{-1} \Big(\frac{1}{2}(r+\frac{1}{r})\Big).
\end{align}

Since this lower bound is realized at $\zeta_1=0, v_1=0,$  and $\cosh^{-1} \Big(\frac{1}{2}(r+\frac{1}{r})\Big)=|\ln(r)|,$ we can conclude that the equality holds. 
\end{proof}

\begin{definition}\label{well defined rep}
Let $\gamma \in \Gamma \subset \mathrm{PU}(Q)$ and choose a lift 
$\widetilde{\gamma} \in \mathrm{U}(Q)$ representing it.
\begin{enumerate}
    \item  The \emph{absolute trace} of $\gamma$ is defined as
    \begin{align*}
        |\operatorname{tr}(\gamma)| := \bigl|\operatorname{tr}(\widetilde\gamma)\bigr|.
    \end{align*}
    \item \label{def: absolute $c$-entry} The \emph{absolute $c$-entry} of $\gamma$ is defined as the absolute value of the lower-left entry of the matrix $\widetilde{\gamma}$, written in the form \eqref{MatrixPu}.
\end{enumerate}
\end{definition}

Both notions are well-defined, since any two lifts differ by a scalar in 
$\mathrm{U}(1)$, which does not affect the absolute value of either the trace or the $c$-entry. For $\gamma \notin \Gamma_{\infty},$
we know by Lemma \ref{Stab of inifinty} that the absolute $c$-entry 
cannot be zero. 

\begin{definition}\label{c inf def}
For a torsion-free lattice $\Gamma \subset \mathrm{PU}(Q)$, we call the infimum of the absolute $c$-entries (see Definition \ref{well defined rep} (ii)) among all $\gamma \in \Gamma \setminus \Gamma_{\infty}$ the \emph{infimum $c$-entry} of $\Gamma$, and we denote it by $c_{\inf}$.
\end{definition}

Parker's generalization of Shimizu's lemma gives the following:

\begin{remark}\label{min c}  
By \cite[Theorem~2.3]{parker1998volumes}, one has $c_{\inf} > \tfrac{4}{t_{\infty}}$, where $t_{\infty}$ is the length of a shortest vertical translation fixing $q_\infty.$  
\end{remark}

\begin{Lemma}\label{length trace}
Let $\gamma\in \mathrm{PU}(Q)$ be a hyperbolic element. Then
\[
\ell(\gamma)\ge 2\ln\bigl(\frac{1}{2}(|\operatorname{tr}(\gamma)|-n+1)\bigr).
\]
\end{Lemma}

\begin{proof}
Let $\widetilde{\gamma}$ be any lift of $\gamma$ to $\mathrm{U}(Q)$. 
Suppose $re^{i\theta}$ and $r^{-1}e^{i\theta}$ are the non-unit eigenvalues of $\widetilde{\gamma}$ with $r>1.$
Since $\widetilde{\gamma}$ has $n-1$ remaining eigenvalues, all of which are units, we obtain
\[
2r \ge r+\frac{1}{r} \ge |\operatorname{tr}(\widetilde{\gamma})| - n + 1.
\]
Finally, since $|\ln(x)|$ is increasing for $x>1$, we conclude from Proposition \ref{Length} that the desired inequality holds.
\end{proof}
 
Lemma \ref{length trace} tells us that if for every hyperbolic element $\gamma \in \Gamma$ the quantity $|\operatorname{tr}(\gamma)|$ is sufficiently large, then the systole $\sy$ will also be large; in other words, the systole $\sy$ can be estimated by estimating the absolute values of the traces of hyperbolic elements (see Proposition~\ref{Picard Modular Surface Ex} for an example of this estimation).

We recall a lemma from Parker's version of Shimizu's lemma:  
\begin{Lemma}(see \cite[Lemma 2.6]{parker1998volumes}) \label{trace}
    Let $g_{\infty}=(0,t)$ be a vertical translation fixing $q_{\infty}$ and let $h$ be an element of $\mathrm{PU}(Q)$ whose absolute $c$-entry is $c_{h}.$ Then,
    $$
    \big|\tr[g_{\infty},h]\big|=n+1+ \frac{1}{4}|c_ht|^2.
    $$
    
\end{Lemma}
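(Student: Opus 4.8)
The plan is to compute the commutator $[g_\infty,h] = g_\infty h g_\infty^{-1} h^{-1}$ directly as a product of matrices in $\operatorname{PU}(Q)$ and read off its trace. First I would write $g_\infty$ for the vertical translation $(0,t)$ using the formula \eqref{Matrix for Hisenberg} with $\tau = 0$, which gives the unipotent upper-triangular matrix
\[
g_\infty = \begin{bmatrix} 1 & 0 & -it/2 \\ 0 & I_{n-1} & 0 \\ 0 & 0 & 1 \end{bmatrix},
\qquad
g_\infty^{-1} = \begin{bmatrix} 1 & 0 & it/2 \\ 0 & I_{n-1} & 0 \\ 0 & 0 & 1 \end{bmatrix}.
\]
Then I would take $h$ in the general form \eqref{MatrixPu} and its inverse $h^{-1} = J_0 h^* J_0$ in the stated form, and multiply the four matrices together. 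Since $g_\infty$ differs from the identity only in the $(1,n+1)$ entry, the conjugation $g_\infty h g_\infty^{-1}$ is a low-rank perturbation of $h$: it adds a multiple of $it/2$ times (row $n+1$ of $h$) to row $1$, and subtracts a multiple of $it/2$ times (column $1$ of $h$) from column $n+1$, plus a second-order correction in the $(1,n+1)$ slot. Keeping careful track of these updates, $g_\infty h g_\infty^{-1}$ acquires a $c$ in a place that interacts with the entries of $h^{-1}$ when one then multiplies by $h^{-1}$ on the right.

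The key computational step is that the trace of a product can be organized so that the only surviving term beyond the ``diagonal'' contribution $\tr(g_\infty h g_\infty^{-1} h^{-1}) = \tr(h h^{-1}) = n+1$ is the cross-term coming from the two first-order perturbations, and that this term evaluates to $|tc/2|^2$. Concretely, writing $g_\infty = I + \tfrac{it}{2} E_{1,n+1}$ (where $E_{1,n+1}$ is the elementary matrix), one has $g_\infty h g_\infty^{-1} = h + \tfrac{it}{2}(E_{1,n+1}h - h E_{1,n+1}) + \tfrac{t^2}{4} E_{1,n+1} h E_{1,n+1}$, and the last term vanishes unless it hits the $(n+1,1)$ entry $c$ of $h$, giving $\tfrac{t^2}{4} c\, E_{1,n+1}$. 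Multiplying by $h^{-1}$ and taking the trace, the linear terms $\pm\tfrac{it}{2}\tr\big((E_{1,n+1}h - hE_{1,n+1})h^{-1}\big) = \pm\tfrac{it}{2}\tr(E_{1,n+1} - E_{1,n+1}) = 0$ cancel, while the quadratic term contributes $\tfrac{t^2}{4}\, c \cdot \tr(E_{1,n+1} h^{-1}) = \tfrac{t^2}{4}\, c \cdot (h^{-1})_{n+1,1} = \tfrac{t^2}{4}\, c \cdot \bar c = |tc/2|^2$, using that the $(n+1,1)$ entry of $h^{-1}$ is $\bar c$ from the form \eqref{MatrixPu}. Hence $\tr[g_\infty,h] = n+1 + |tc/2|^2$.

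The main obstacle — though it is really bookkeeping rather than a conceptual difficulty — is keeping the indices straight in the perturbation expansion and making sure the first-order terms genuinely cancel rather than leaving a residual linear-in-$t$ contribution; this cancellation is exactly what forces the answer to be real and quadratic in $t$, as it must be for a commutator trace. One should double-check the sign conventions in $g_\infty^{-1}$ and in the $(n+1,1)$-entry of $h^{-1}$ (which is $\bar c$, not $c$ or $-\bar c$) so that the product $c\bar c = |c|^2 \geq 0$ comes out with the correct sign. Alternatively, and perhaps more cleanly, I would verify the identity first in the case $n=1$ (a $2\times 2$ computation in $\operatorname{PU}(1,1)$, essentially $\operatorname{SL}_2(\mathbb{R})$, where $g_\infty = \left[\begin{smallmatrix} 1 & -it/2 \\ 0 & 1 \end{smallmatrix}\right]$ and the computation is immediate) and then observe that the commutator trace only ever involves the $\{1, n+1\}$-indexed block of $h$ together with the entry $c = h_{n+1,1}$, so the higher-dimensional case reduces to the same two-by-two manipulation with $c$ in place of the corresponding lower-left entry. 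Either route yields the stated formula $\tr[g_\infty,h] = n+1+|tc/2|^2$.
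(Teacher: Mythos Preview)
Your proof is correct and follows essentially the same strategy as the paper --- a direct matrix computation of the commutator trace --- but your organization is cleaner: writing $g_\infty = I + \tfrac{it}{2}E_{1,n+1}$ and expanding makes the vanishing of the linear-in-$t$ terms (via $\tr(Eh h^{-1}) - \tr(hEh^{-1}) = \tr(E)-\tr(E)=0$) and the appearance of $c\bar c$ from the quadratic term $\tfrac{t^2}{4}c\,E_{1,n+1}h^{-1}$ completely transparent, whereas the paper simply multiplies out the block matrices $g_\infty h$ and $g_\infty^{-1}h^{-1}$ and collects terms using the relations encoded in \eqref{MatrixPu}. One cosmetic point: in your linear-term line, write $\tr(E)-\tr(E)=0$ rather than $\tr(E-E)$, since the cancellation is by cyclicity of the trace applied separately to the two summands.
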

\begin{proof} Let $\widetilde{g}$ and $\widetilde{h}$ be representatives
of $g_{\infty}$ and $h$ in $\mathrm{U}(Q)$, given by
$$
\widetilde{g}=\begin{bmatrix}
1& 0& -i\frac{t}{2}\\

0 &I & 0 \\

0& 0 & 1
\end{bmatrix}, \
\widetilde{h}= \begin{bmatrix}
a& \tau^*& b\\
\alpha &A & \beta \\
c& \delta^* & e
\end{bmatrix}, \
\widetilde{h}^{-1}=\begin{bmatrix}
\bar{e}& \beta^*& \bar{b}\\
\delta & A^* &  \tau\\
\bar{c}& \alpha^* & \bar{a}
\end{bmatrix},
$$
where \eqref{MatrixPu} was used to find the inverse of $\widetilde{h}$, and necessarily $|c| = c_h$. As noted after the definition of the absolute trace \eqref{well defined rep}, to compute 
$|\operatorname{tr}[g_{\infty},h]|$ it suffices to evaluate 
$|\operatorname{tr}[\widetilde{g},\widetilde{h}]|$ (it does not depend on the choice of representatives).    

To find $|\tr[\widetilde{g},\widetilde{h}]|$, note that  
$$\widetilde{g}\widetilde{h}=\begin{bmatrix}
a-\frac{i}{2} ct&\tau^*-\frac{i}{2} t\delta^* & b-\frac{i}{2}te\\

\alpha &A & \beta \\

c& \delta^* & e
\end{bmatrix},\
\widetilde{g}^{-1} \widetilde{h}^{-1}=\begin{bmatrix}
\bar{e}+\frac{i}{2} \bar{c}t&\beta^*+\frac{i}{2} t\alpha^* & \bar{b}+\frac{i}{2}t\bar{a}\\

\delta & A^* &  \tau \\

\bar{c}& \alpha^* & \bar{a}
\end{bmatrix}.
$$
Therefore, using the relation given by equation \eqref{MatrixPu} it follows that
\begin{align*}
\operatorname{tr} [\widetilde{g},\widetilde{h}]
&=
a\bar{e}+ \frac{1}{4}|ct|^2-\frac{it}{2}(c\bar{e}-\bar{c}a)
+
\delta \tau^* - \frac{i}{2} t|\delta|^2
+
b\bar{c} -\frac{i}{2}te\bar{c} \\ \nonumber
& \hspace{.4cm}+
\alpha \beta^* +\frac{i}{2}t|\alpha|^2+AA^* + \beta \alpha^* 
+
c\bar{b}+\frac{i}{2}t\bar{a}c +\delta^* \tau + e \bar{a} \\ \nonumber
&=
n+1+ \frac{1}{4}|ct|^2.
\end{align*}
\end{proof}

We prove a lemma which will help us to see the relation between the depth of a cusp in terms of the trace of the hyperbolic elements in $\Gamma:$
\begin{Lemma}\label{shim} Let $\gamma \in \Gamma \setminus \Gamma_{\infty}$ have absolute $c$-entry $c_{\gamma}$. 
For every $z \in \mathbb{S}$ the following inequality holds:
\[
u(z)\,u(\gamma \cdot z) \;\le\; \frac{4}{|c_{\gamma}|^{2}}.
\]

\end{Lemma}
\begin{proof}
 There are unique Heisenberg transformations $h_1, h_2$ such that $h_1(q_0)=\gamma(q_\infty)$ and $h_2^{-1}(q_0)=\gamma^{-1}(q_\infty).$ Consider $\hat{\gamma}= h_1^{-1} \gamma h_2^{-1}$ and note that as the Heisenberg translations are stabilizers of the $u$-coordinate, we have that 
$u(\gamma z)=u(\hat{\gamma} z).$ Because both $h_1^{-1}$ and $h_2^{-1}$ fix $q_{\infty}$, it follows from Lemma \ref{Stab of inifinty} that the absolute $c$-entries of $\gamma$ and $\hat{\gamma}$ coincide. Moreover, since both $h_1^{-1}$ and $h_2^{-1}$ fix $q_\infty,$ the element $\hat{\gamma}$ swaps $q_\infty$ and $q_0.$ Therefore, Lemma \ref{FixPt} tells us that $\hat{\gamma}$ has a representative $\tilde{\gamma}\in \mathrm{U}(Q)$ such that its acts on the horospherical coordinates $(\zeta,u,v)$ via: 
$$T_{\tilde{\gamma}}:(\zeta,v,u)\longrightarrow \Big(\dfrac{A \zeta r_{\gamma}^2}{||\zeta
||^2+u-iv},\dfrac{-v r_{\gamma}^4}{\bi| ||\zeta
||^2+u-iv \bi|^2 },  \dfrac{u r_{\gamma}^4}{\bi| ||\zeta
||^2+u-iv \bi|^2 }\Big),$$
where $A\in \mathrm{U}(n-1)$ and $r_{\tilde{\gamma}}= \sqrt{\dfrac{2}{|c_{\gamma}|}}.$ This gives that 
$$u(z) u(\gamma\cdot z)= u(z) u(\tilde{\gamma}\cdot z)
= \dfrac{u^2}{|||\zeta||^2+u-iv|^2}\cdot \bi|\dfrac{2}{c_{\gamma}}\bi|^2 \le \dfrac{4}{|c_{\gamma}|^2}.$$
\end{proof}

Lemma \ref{shim} implies the following:
\begin{Proposition}\label{height} 
Let $c_{\inf}$ be the infimum of the absolute values of the $c$-entries of $\Gamma$, as defined in Definition \ref{c inf def}.  
The horoball $B_{\infty}(2/c_{\inf})$
injects into $X$, and therefore the depth of a cusp associated with the equivalence class of $q_{\infty}$ is at least $\dfrac{t_{\infty} \cdot c_{\inf}}{2}.$
\end{Proposition}

\begin{proof}
Let $\gamma \in \Gamma \setminus \Gamma_{\infty}$, and consider the horoball centered at $q_{\infty}$ with height $2/c_{\inf}:$
\[
U_{\infty} \;=\; \{ z \in \Sn \mid u(z) > \tfrac{2}{|c_{\inf}|} \}.
\] By Lemma \ref{shim}, for every $z \in \Sn$ we have
\[
u(z)\,u(\gamma \cdot z) \;\le\; \frac{4}{|c_{\inf}|^{2}}.
\]
Hence, the sets $U_\infty$ and $\raisebox{0.3ex}{$\gamma$} \, U_{\infty}$ are disjoint. This implies that the horoball
$B_{\infty}(2/c_{\inf}) = \Gamma_{\infty} \backslash U_{\infty}$ injects into $X$. By the definition of the depth of a cusp, the depth of the cusp associated with the equivalence class of $q_{\infty}$ is at least $\dfrac{t_{\infty} \cdot c_{\inf}}{2}.$  
\end{proof}

Now we will show that the systole is always positive:

\begin{Proposition} \label{positive sys}
The systole, as defined in \eqref{sys def}, is always positive. 
\end{Proposition} 

\begin{proof} 
Suppose not. Then there would exist a sequence of hyperbolic elements $\{h_m\}_{m=1}^{\infty}$ in $\Gamma$ such that $\ell(h_m) \to 0$ as $m \to \infty.$ Let $r_m e^{i\theta_m}$ be the eigenvalue of a representative of $h_m$ in $\mathrm{U}(Q)$ with $r_m>1.$ It follows from Proposition \ref{Length} that $r_m \to 1$ as $m \to \infty.$ By the definition of the infimum, there exists $z_m \in \mathbb{B}^n$ such that 
\[
d(z_m, h_m \cdot z_m) \le \ell(h_m) + \frac{1}{m}.
\]
Therefore $d(z_m, h_m \cdot z_m) \to 0$ as $m \to \infty.$ Since $\overline{\mathbb{B}^n}$ is compact, there exists a subsequence of $\{z_m\}_{m=1}^{\infty}$, again denoted by $\{z_m\}_{m=1}^{\infty}$, which converges to a point $z \in \overline{\mathbb{B}^n}.$ By the triangle inequality,
\[
d(z, h_m \cdot z) \le d(z,z_m) + d(z_m, h_m \cdot z_m) + d(h_m \cdot z_m, h_m \cdot z),
\]
and since $d(h_m \cdot z_m, h_m \cdot z) = d(z_m,z),$ we get $d(z,h_m \cdot z) \to 0$ as $m \to \infty.$
 
Now consider two cases:  
\begin{enumerate}
    \item $z \in \mathbb{B}^n:$ In this case, the set $\{h_m \cdot z\}_{m=1}^{\infty}$ has an accumulation point in the interior of the unit ball, contradicting the fact that $\Gamma$ acts discontinuously on $\mathbb{B}^n$(see Remark \ref{discontinous}).
    
    \item $z \in \partial \mathbb{B}^n:$ Since $\mathrm{PU}(n,1)$ acts transitively on the boundary, there exists $g \in \mathrm{PU}(n,1)$ such that $g \cdot z = q_\infty.$ Consider the sequences $\{h'_m := g h_m g^{-1}\}$ in the lattice $g \Gamma g^{-1}$ and set $z'_m := g \cdot z_m.$ Note that $z'_m \to q_\infty$ and $d(z'_m, h'_m \cdot z'_m) \to 0$ as $m \to \infty$; hence, $u(z'_m) \to \infty$ and $u(h'_m \cdot z'_m) \to \infty.$ Let $c_m$ be the absolute $c$-entry of $h'_m.$  Note that Lemma \ref{hyperbolic stab} tells us that every non-identity element in a cusp stabilizer is parabolic, therefore, none of the $h'_m$ fixes $q_\infty.$ By Lemma \ref{Stab of inifinty}, we have $c_m \neq 0$, and by Remark \ref{min c}, $c_m \ge 4/t_\infty$ for all $m$, where $t_\infty$ is the length of a shortest vertical translation around $q_\infty.$ Then Lemma \ref{shim} gives
    \[
    u(z'_m) \, u(h'_m \cdot z'_m) \le \frac{4}{|c_m|^2} \le \frac{t_\infty}{4}.
    \]
   This inequality contradicts the fact that both $u(z'_m)$ and $u(h'_m \cdot z'_m)$ tend to infinity as $m \to \infty.$
\end{enumerate}
\end{proof}

Consider the set 
$$S_{\Gamma}:=\big\{ \gamma \in \Gamma \big| |\tr(\gamma)|> n+1 \big\},$$
associated to $\Gamma.$ It follows from the classification of isometries that if $\gamma \in \Gamma$ has $|\operatorname{tr}(\gamma)|> n+1,$ then $\gamma$ must be hyperbolic. Hence, all elements of $S_{\Gamma}$ are hyperbolic. Also note that Lemma \ref{trace} tells us that $S_{\Gamma}$ is not empty.  
We associate the number
$$\lambda_{\Gamma}:=\dis \inf_{\gamma \in S_{\Gamma}} |\tr(\gamma)|,$$
to $\Gamma.$ Since $S_{\Gamma}$ is nonempty, it follows that $\lambda_{\Gamma}\ge n+1.$
The quantity $\lambda_{\Gamma}$ will play a role as an intermediate quantity to relate the systole of $X$ to the depth of cusps of $X$.  Specifically, we can see how $\sy$ gives a lower bound for $\lambda_{\Gamma}$:

\begin{Proposition}\label{Trace Sys ineq}
The following inequality holds:
      $
  \lambda_{\Gamma}
>
1-n+
 e^{\sy/2 }. 
$
\end{Proposition} 
\begin{proof}
    Consider $\gamma \in S_\Gamma.$ Let $\widetilde{\gamma}$ be a representative of $\gamma$ in $\mathrm{U}(Q).$ Let $re^{i\theta}$ and $r^{-1} e^{i\theta}$ be eigenvalues of $\widetilde{\gamma}$ which are not units. As $\sy$ is the length of a shortest geodesic, Proposition \ref{Length} implies that $2|\ln(r)|
    \ge
   \sy.$
    Since the other $n-1$ eigenvalues of $\widetilde{\gamma}$ have norm $1,$ the desired inequality follows from the triangle inequality. 
   
\end{proof}

Now, we can show the relation between the quantity $\lambda_{\Gamma}$ and depth of each cusps of $X:$ 

\begin{Proposition} \label{injectionHoroball}
The  depth of each cusp of $X=\Gamma \backslash \mathbb{B}^n$ is at least $\sqrt{\lambda_{\Gamma} -n -1}.$
    
\end{Proposition}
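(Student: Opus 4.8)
The plan is to leverage the chain of inequalities already assembled in this section and reduce the claim about an arbitrary cusp to the statement for the cusp $q_\infty$. First I would fix a cusp $q_i$ of $X$ and a $g \in \operatorname{PU}(Q)$ with $g\cdot q_i = q_\infty$, so that after conjugating $\Gamma$ by $g$ we may assume without loss of generality that $q_i = q_\infty$; note that conjugation changes neither the systole, nor $\lambda_{\Gamma}$ (trace is conjugation-invariant), nor the depth of the cusp (the depth $d_i$ was explicitly noted to be conjugation-invariant). So it suffices to bound the depth of $q_\infty$.

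Next I would combine Proposition \ref{height} with Lemma \ref{trace}. Proposition \ref{height} gives that the depth of $q_\infty$ is at least $\tfrac{c_m t_\infty}{2}$, where $c_m$ is the infimum of $|c|$ over all $\gamma \in \Gamma \setminus \Gamma_\infty$ written in the form \ref{MatrixPu}, and $t_\infty$ is the length of the shortest vertical translation in $\Gamma_\infty$. The key point is that for any such $\gamma$, the commutator $[g_\infty, \gamma]$ with the shortest vertical translation $g_\infty = (0,t_\infty)$ has $\operatorname{tr}[g_\infty,\gamma] = n+1 + |t_\infty c/2|^2$ by Lemma \ref{trace}, and this is a real number strictly larger than $n+1$, so $[g_\infty,\gamma] \in S_\Gamma$ (one should check $[g_\infty,\gamma]$ is nontrivial — it is hyperbolic since its trace exceeds $n+1$ and by the remark that elements with $|\operatorname{tr}| > n+1$ are hyperbolic). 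Hence by definition of $\lambda_{\Gamma}$ we get $n+1 + |t_\infty c/2|^2 \ge \lambda_{\Gamma}$, i.e. $|t_\infty c/2|^2 \ge \lambda_{\Gamma} - n - 1$, so $\tfrac{t_\infty |c|}{2} \ge \sqrt{\lambda_{\Gamma}-n-1}$ for every $\gamma \in \Gamma\setminus\Gamma_\infty$.

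Taking the infimum over $\gamma$ gives $\tfrac{t_\infty c_m}{2} \ge \sqrt{\lambda_{\Gamma} - n - 1}$, and combined with Proposition \ref{height} this shows the depth of $q_\infty$ is at least $\sqrt{\lambda_{\Gamma}-n-1}$, completing the argument. (Implicitly one needs $\lambda_{\Gamma} \ge n+1$ for the bound to be non-vacuous, but this is automatic from the definition $S_\Gamma = \{|\operatorname{tr}(\gamma)| > n+1\}$, or the statement is simply vacuous otherwise.)

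The main obstacle I anticipate is the bookkeeping around the infimum $c_m$: one must make sure the infimum of $|c|$ over $\gamma \in \Gamma\setminus\Gamma_\infty$ is actually attained (or handle it as an infimum throughout), and that replacing $\gamma$ by the commutator $[g_\infty,\gamma]$ is legitimate — in particular that $[g_\infty,\gamma]$ is a genuine hyperbolic element of $\Gamma$ so that it lies in $S_\Gamma$ and contributes to $\lambda_{\Gamma}$. The discreteness of $\Gamma$ and Parker's generalized Shimizu lemma (already invoked in \S\ref{Neighborhood of cusps} to guarantee horoballs inject for large height) ensure $c_m > 0$, which is what makes the chain meaningful; the rest is the direct substitution sketched above.
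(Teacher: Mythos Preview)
Your proposal is correct and follows essentially the same argument as the paper: reduce to the cusp $q_\infty$ by conjugation-invariance of $\lambda_\Gamma$, apply Lemma~\ref{trace} to the commutator $[g_\infty,h]$ with the shortest vertical translation to get $\operatorname{tr}[g_\infty,h]=n+1+|t_\infty c/2|^2>n+1$, hence $[g_\infty,h]\in S_\Gamma$ and $|t_\infty c/2|\ge\sqrt{\lambda_\Gamma-n-1}$, then pass to the infimum $c_m$ and invoke Proposition~\ref{height}. Your extra care about the commutator being genuinely hyperbolic and about $c_m>0$ is more explicit than the paper's write-up but not a different idea.
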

\begin{proof}
Since both the depth of cusps and $\lambda_{\Gamma}$ are invariant under the conjugation by an element of $\mathrm{PU}(n,1)$, it is sufficient to prove the lemma for a cusp $c_i$ associated with the equivalence class of $q_{\infty}.$
    Let $g_{\infty}=(0,t_{\infty})$ be a shortest vertical translation in $\Gamma_{\infty}.$ Suppose that $h\in \Gamma$ is an element which does not fix $q_{\infty}.$ Let $c$ be the absolute $c$-entry of $h,$ which is not zero by Lemma \ref{Stab of inifinty}. It follows from Lemma \ref{trace} that
    $$|\operatorname{tr}[g_\infty, h]|= n+1+ \bi|\dfrac{t_{\infty}c}{2}\bi|^2.$$
    Since $c\neq 0,$ we have that $ [g_\infty, h] \in S_{\lambda}.$ This implies that 
    $$\bi|\dfrac{t_{\infty}c}{2}\bi|\ge \sqrt{\lambda_{\Gamma} -n-1}.$$
    Since this inequality holds for every $h\in\Gamma \setminus \Gamma_{\infty},$ we can conclude that
    $$\bi|\dfrac{t_{\infty}c_{\inf}}{2}\bi|\ge \sqrt{\lambda_{\Gamma} -n-1}.$$
    Hence, Proposition \ref{height} implies $d_{i} \ge  \sqrt{\lambda_{\Gamma} -n -1},$ where $d_{i}$ is the depth of cusp $c_i.$
\end{proof}
To pass from the individual depth of cusps to the uniform depth of cusps we will use this lemma:
\begin{Lemma}(\cite[Lemma 2.5]{parker1998volumes}) \label{disjoint1} Let $B_{0}(\tilde{u}_{0})$ be the horoball of height $\tilde{u}_0$ based at $q_0$, and let $B_{\infty}(\tilde{u}_{\infty})$ be the horoball of height $\tilde{u}_{\infty}$ based at $q_\infty.$ These two horoballs are disjoint if and only if
$$\tilde{u}_0 \cdot \tilde{u}_{\infty}\ge 4.$$
\end{Lemma}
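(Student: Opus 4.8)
The plan is to realize the horoball at $q_0$ as the image of the horoball at $q_\infty$ under the standard inversion, and then reduce the disjointness condition to an elementary one-variable estimate.

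First I would fix the involution $\iota := J_0 \in \operatorname{PU}(Q)$. Since $J_0^2 = I$ and $J_0^* = J_0$, the identity $\iota^{-1} = J_0\,\iota^*\,J_0$ holds, so indeed $\iota \in \operatorname{PU}(Q)$; moreover $\iota(q_\infty) = q_0$ and $\iota(q_0) = q_\infty$. Concretely $\iota$ is the element of Lemma~\ref{FixPt}(1) with $A = I$ and $c = 1$. Taking $g = \iota$ in the definition of the horoball based at $q_0$ gives $B_0(\tilde{u}_0) = \iota \cdot B_\infty(\tilde{u}_0)$, and because $\iota$ is an involution a point $p \in \Sn$ lies in $B_0(\tilde{u}_0)$ if and only if $u(\iota(p)) > \tilde{u}_0$.

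Next I would read off from Lemma~\ref{FixPt}(1) (with $A = I$, $c = 1$) the height coordinate of $\iota(p)$ in horospherical coordinates:
$$ u\bigl(\iota(\zeta,v,u)\bigr) = \frac{4u}{(\|\zeta\|^2 + u)^2 + v^2}. $$
Since $(\|\zeta\|^2 + u)^2 + v^2 \ge u^2$, with equality exactly when $\zeta = 0$ and $v = 0$, this yields the key estimate
$$ u \cdot u\bigl(\iota(\zeta,v,u)\bigr) = \frac{4u^2}{(\|\zeta\|^2 + u)^2 + v^2} \le 4, $$
and equality holds precisely along the geodesic $\{\zeta = 0,\ v = 0\}$ joining $q_0$ to $q_\infty$.

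Finally I would combine the two observations. By the first step, $B_\infty(\tilde{u}_\infty) \cap B_0(\tilde{u}_0) \ne \varnothing$ if and only if there exists $(\zeta,v,u)$ with $u > \tilde{u}_\infty$ and $u(\iota(\zeta,v,u)) > \tilde{u}_0$. If $\tilde{u}_0\tilde{u}_\infty < 4$, choose any $u$ with $\tilde{u}_\infty < u < 4/\tilde{u}_0$; then the point $(0,0,u)$ lies in both horoballs because $u(\iota(0,0,u)) = 4/u > \tilde{u}_0$. Conversely, if the horoballs meet at some point $(\zeta,v,u)$, then $\tilde{u}_0 < u(\iota(\zeta,v,u)) \le 4/u < 4/\tilde{u}_\infty$, hence $\tilde{u}_0\tilde{u}_\infty < 4$. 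Therefore the two horoballs are disjoint exactly when $\tilde{u}_0\tilde{u}_\infty \ge 4$. The proof is essentially a direct computation; the only point requiring care is correctly converting the projective action of $J_0$ back into horospherical coordinates (equivalently, applying the coordinate formula of Lemma~\ref{FixPt}(1) with the normalization $c = 1$), after which everything reduces to the elementary bound $(\|\zeta\|^2 + u)^2 + v^2 \ge u^2$.
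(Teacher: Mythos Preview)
Your proof is correct. The paper does not give its own proof of this lemma; it simply cites \cite[Lemma~2.5]{parker1998volumes}. Your argument is exactly the natural one, and in fact it mirrors the computation the paper carries out in Lemma~\ref{shim}: there too one conjugates by an element swapping $q_0$ and $q_\infty$ (via Lemma~\ref{FixPt}(1)) and reads off the bound $u(z)\,u(\gamma z)\le |2/c|^2$ from the explicit formula for the height coordinate. Your case is the special instance $c=1$, $\gamma=\iota=J_0$, which gives the sharp bound $4$, and you additionally identify the equality locus $\{\zeta=0,\,v=0\}$ to obtain the ``only if'' direction.
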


 \begin{Proposition}\label{Depth Trace Ineq} Let $d$ be the  uniform depth of cusps of $X$. Then,  
 $$d \ge \operatorname{min}\{(\lambda_{\Gamma}-n-1)^{\frac{1}{4}}, (\lambda_{\Gamma}-n-1)^{\frac{1}{2}}\}.$$
\end{Proposition}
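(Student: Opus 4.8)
The plan is to deduce Proposition~\ref{Depth Trace Ineq} from Proposition~\ref{injectionHoroball} together with the disjointness criterion in Lemma~\ref{disjoint1}, by carefully unwinding Definition~\ref{UniDep} of the uniform depth. Set $\mu:=\sqrt{\lambda_{\Gamma}-n-1}$, which by Proposition~\ref{injectionHoroball} is a lower bound for the depth $d_i$ of every cusp $q_i$. Condition (1) in Definition~\ref{UniDep} is then immediate: taking $d=\mu$ (or any $d\le \mu$) we have $d\le d_i$ for all $i$, so each $\Gamma_i\backslash B_i(t_i/d)$ injects into $X$. The real content is condition (2): we must choose $d$ (as large as possible, but in any case $\ge \min\{\mu,\mu^{1/2}\}$) so that all the horoballs $B_i(t_i/d)$ are pairwise disjoint.

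First I would record, as in the proof of Proposition~\ref{injectionHoroball}, that the inequality $|t_i c/2|\ge \mu$ holds not just for the minimal $|c|$ but for \emph{every} $h\in\Gamma$ not fixing $q_i$, written in the normalized form; equivalently, after conjugating $q_i$ to $q_\infty$ and $q_j$ to $q_0$, for the transformation swapping the two cusps one gets a lower bound on the product $t_i t_j |c|^2$. Concretely, fix two distinct cusps $q_i,q_j$. Conjugate so that $q_i=q_\infty$, $q_j=q_0$, let $g_\infty=(0,t_i)$ and $g_0=(0,t_j)$ be the shortest vertical translations in the respective stabilizers, and let $h\in\Gamma$ be an element moving $q_0$ to $q_\infty$ (such an $h$ exists iff $q_i,q_j$ are $\Gamma$-equivalent; if they are not equivalent we still have $\Gamma$-elements with $c\ne 0$ and argue directly). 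Using Lemma~\ref{trace} applied to the commutators $[g_\infty,h]$ and to the conjugate situation for $g_0$, one extracts $|t_i c/2|\ge \mu$ and $|t_j c/2|\ge \mu$ — wait, more precisely one wants the single relation $|t_i t_j c^2/4|\ge$ (something), which then by Lemma~\ref{shim} (the Shimizu-type bound $u(z)u(hz)\le|2/c|^2$) controls how far into each cusp a point can simultaneously lie. The upshot I expect is: the horoballs $B_i(t_i/d)$ and $B_j(t_j/d)$ are disjoint as soon as $(t_i/d)(t_j/d)\ge 4$, and the bound $|t_i t_j c^2|\ge 16\mu^2$-type inequality forces $t_i t_j \ge 16\mu^2/|c|^2$, which combined with $u_i\ge 2/c_m$ etc. gives disjointness once $d^2\le t_i t_j/4$, i.e. $d\le \mu$ again. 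Handling the case where both bounds are in play (when $\mu<1$ versus $\mu\ge1$) is exactly where the $\min\{\mu^{1/4},\mu^{1/2}\}$ comes from: we need $d$ simultaneously to satisfy $d\le d_i\sim\mu$ (linear bound, condition (1)) and $d^2\le$ product-of-depths-type quantity $\sim\mu$ (quadratic bound, condition (2) via Lemma~\ref{disjoint1}), and taking $d=\min\{\mu^{1/2},\mu^{1/4}\}$ satisfies both regardless of whether $\mu$ is large or small.

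So the key steps, in order, are: (i) invoke Proposition~\ref{injectionHoroball} to get $d_i\ge \mu$ for each cusp, settling condition (1) for any $d\le\mu$; (ii) for a pair of distinct cusps, conjugate one to $q_\infty$ and the other to $q_0$, and use Lemmas~\ref{trace} and~\ref{FixPt} together with the hypothesis $\lambda_\Gamma$ (every element of $S_\Gamma$ has $|\tr|\ge\lambda_\Gamma$) to bound the relevant $|c|$ from below in terms of $t_i,t_j$; (iii) translate this via Lemma~\ref{shim}/Lemma~\ref{disjoint1} into: $B_i(t_i/d)\cap B_j(t_j/d)=\varnothing$ provided $d$ is at most a quantity comparable to $\mu^{1/2}$ (using $(t_i/d)(t_j/d)\ge 4$ and $u_i\ge 2/c_m\ge$ the trace bound); (iv) combine (i) and (iii): the largest admissible $d$ is at least $\min\{\mu^{1/2},\mu^{1/4}\}$ because one constraint is linear in $\mu$ and the other quadratic, and $\min\{\mu^{1/2},\mu^{1/4}\}$ dominates neither $\mu$ nor $\sqrt\mu$ from above in both regimes — precisely, $\min\{\mu^{1/2},\mu^{1/4}\}\le\mu$ and $\min\{\mu^{1/2},\mu^{1/4}\}\le\mu^{1/2}$ always hold, which is all we need.

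The main obstacle I anticipate is step (ii)–(iii): making the bookkeeping between the two cusps clean. In Proposition~\ref{injectionHoroball} only one cusp (and its individual depth) is analyzed via a single commutator $[g_\infty,h]$; for the \emph{uniform} depth one must control a product of heights $u_i u_j$ (equivalently $t_i t_j / (d_i d_j)$-type quantities) and show it is $\ge 4$, and here the element $h$ realizing the relevant $c$ may not simultaneously interact optimally with both vertical translations $g_\infty$ and $g_0$. The technical fix is to apply Lemma~\ref{shim} directly to the swap element (the element sending $q_j\mapsto q_i$): Lemma~\ref{shim} gives $u_i(z)\,u_i(\gamma z)\le|2/c|^2$, and after the appropriate conjugation a similar bound controls the overlap of the two cusp-horoballs; then $|c|$ is bounded below using $|\tr[g_\infty,\gamma]|\ge\lambda_\Gamma$ exactly as in Proposition~\ref{injectionHoroball}. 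Once that single cross-cusp inequality $t_i t_j |c|^2 \gtrsim \mu^2$ is in hand, Lemma~\ref{disjoint1} does the rest mechanically, and the exponent juggling $\mu$ vs.\ $\mu^{1/2}$ is elementary.
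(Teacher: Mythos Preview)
Your overall structure is right—use Proposition~\ref{injectionHoroball} for condition (1) of Definition~\ref{UniDep}, then establish disjointness of horoballs for condition (2) via Lemma~\ref{disjoint1}—but step (ii)–(iii) has a genuine gap, and the fix you propose does not work. The element $h$ you introduce (swapping the two cusps) does not exist in $\Gamma$: distinct cusps of $X$ are by definition not $\Gamma$-equivalent, as you yourself note, so the ``swap element'' you fall back on in the final paragraph is unavailable. The alternative—applying Lemma~\ref{trace} to $[g_\infty,h]$ for a generic $h$ with $c$-entry $c\ne 0$—only yields $|t_\infty c|\ge 2\mu$, with no control on $t_0$; the inequality $t_i t_j\ge 16\mu^2/|c|^2$ you write down still contains $|c|$ and cannot be eliminated along these lines.

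The missing idea is that you should take $h=g_0$ \emph{itself} in Lemma~\ref{trace}. The vertical translation $g_0=(0,t_0)$ based at $q_0$ is an element of $\Gamma$ not fixing $q_\infty$, and its $c$-entry (read off from the matrix in \eqref{Matrix for Hisenberg}) is precisely $-it_0/2$. Lemma~\ref{trace} then gives $\operatorname{tr}[g_\infty,g_0]=n+1+|t_\infty t_0/4|^2$, so $[g_\infty,g_0]\in S_\Gamma$ and $t_0 t_\infty\ge 4\mu$ directly—no third element, no stray $|c|$. Lemma~\ref{disjoint1} now yields disjointness of the horoballs at heights $t_\infty/\mu^{1/2}$ and $t_0/\mu^{1/2}$, since their product is $t_0 t_\infty/\mu\ge 4$; combined with condition (1), this gives $d\ge\min\{\mu,\mu^{1/2}\}$. (A minor point: your target should read $\min\{\mu,\mu^{1/2}\}$, not $\min\{\mu^{1/2},\mu^{1/4}\}$ as you repeatedly write; the latter is strictly weaker and would not prove the proposition as stated.)
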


\begin{proof} Let $d'=\operatorname{min}\{(\lambda_{\Gamma}-n-1)^{\frac{1}{4}}, (\lambda_{\Gamma}-n-1)^{\frac{1}{2}}\}.$ We will show that the horoballs $\Gamma_i \backslash B_i(t_i/d')$ inject into $X$ and they are disjoint. Since the uniform depth of cusps is the largest number satisfying these properties, the claim follows.

By Proposition \ref{injectionHoroball} we know that the depth of each cusp is at least $(\lambda_{\Gamma}-n-1)^{\frac{1}{2}},$ therefore the horoballs $\Gamma_i \backslash B(t_i/d')$ inject into $X.$ Hence, it is enough to show that for $i\neq j,$ the horoballs $\Gamma_i \backslash B(t_i/d')$ and $\Gamma_j \backslash B(t_j/d')$ are disjoint.  

Since $\mathrm{PU}(Q)$ acts doubly transitively on the boundary, we can, with a change of coordinates if necessary (i.e., by conjugating the lattice), assume that $q_i = q_\infty$ and $q_j = q_0$. Note that as both $\lambda_{\Gamma}$ and $d$ are invariant under conjugation, this change of coordinates does not change them. Let $g_0=(0, t_0)$ be the shortest vertical translation based at $q_{0}$ with $t_0>0$ and $g_{\infty}=(0,t_{\infty})$ be the shortest vertical translation based at $q_{\infty}$ with $t_{\infty}>0.$
Let $\widetilde{g}_{\infty}$ and $\widetilde{g}_{0}$ be the representatives of $g_{\infty}$ and $g_{0}$ in $\mathrm{U}(Q)$ written in the form \eqref{Matrix for Hisenberg}:
    \begin{align*}
    \widetilde{g}_{0}
    =
    \begin{bmatrix}
        1&0&0\\
        0&I&0\\
        -it_{0}/2&0&1
    \end{bmatrix},
    \ \ \ \
    \widetilde{g}_{\infty}
    =
 \begin{bmatrix}
        1&0&-it_{\infty}/2\\
        0&I&0\\
        0&0&1
    \end{bmatrix}.
     \end{align*}
    Lemma \ref{trace} implies that $|\operatorname{tr}[g_{\infty},g_{0}]|= n+1+\bi|\frac{t_{0}t_{\infty}}{4}\bi|^2.$ Therefore, $[g_{\infty},g_{0}] \in S_{\lambda}$ and it follows that 
    $$t_{0}t_{\infty} \ge 4 \sqrt{\lambda_{\Gamma}-n-1}.$$
Consider $\tilde{u}_{0}:=\frac{t_0}{(\lambda_{\Gamma}-n-1)^{\frac{1}{4}}}$ and $\tilde{u}_{\infty}:=\frac{t_\infty}{(\lambda_{\Gamma}-n-1)^{\frac{1}{4}}}.$ The inequality above implies that 
    \begin{align} \label{disjoint}
        \tilde{u}_0 \cdot \tilde{u}_{\infty}\ge 4,
    \end{align}
    and therefore it follows from Lemma \ref{disjoint1} that the horoball centered at $q_0$ with height $\tilde{u}_{0}$ and the horoball centered at $q_\infty$ with height $\tilde{u}_{\infty}$ are disjoint. Therefore, since $d'\le (\lambda_{\Gamma}-n-1)^{\frac{1}{4}},$ the horoballs $\Gamma_i \backslash B(t_i/d')$ and $\Gamma_j \backslash B(t_j/d')$ are disjoint.  
\end{proof}

We finally conclude that the systole gives a lower bound for the uniform depth from below:
\begin{theorem}\label{systolDepth} Let $d$ be the uniform depth of cusps of $X.$ Then, 
\begin{align*} 
    d
    \ge
\operatorname{min}\{
\bi(-2n+s'\bi)^{\frac{1}{4}}
,
\bi(-2n+s'\bi)^{\frac{1}{2}}
\},
\end{align*}
where $s'=e^{\sy/2 }.$

\end{theorem}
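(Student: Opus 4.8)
The plan is to combine the two previously-established inequalities directly. Proposition~\ref{Trace Sys ineq} gives $\lambda_{\Gamma} \ge 1 - n + \sqrt{2}\,e^{\sy/4}$, and Proposition~\ref{Depth Trace Ineq} gives $d \ge \min\{(\lambda_{\Gamma}-n-1)^{1/4}, (\lambda_{\Gamma}-n-1)^{1/2}\}$. Writing $s' = \sqrt{2}\,e^{\sy/4}$, the first inequality reads $\lambda_{\Gamma} \ge 1 - n + s'$, so that $\lambda_{\Gamma} - n - 1 \ge -2n + s'$. The only subtlety is monotonicity: I need to know that both $t \mapsto t^{1/4}$ and $t \mapsto t^{1/2}$ are non-decreasing (on the relevant range), so that substituting the lower bound $-2n+s'$ for $\lambda_{\Gamma}-n-1$ inside the $\min$ produces a valid lower bound for $d$. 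This is immediate for nonnegative arguments, and if $-2n+s' < 0$ there is nothing to prove since depths are nonnegative — though one should note that the quantities $(\lambda_{\Gamma}-n-1)^{1/4}$ etc.\ in Proposition~\ref{Depth Trace Ineq} already implicitly carry a hypothesis that $\lambda_{\Gamma}-n-1 \ge 0$, i.e.\ that $S_{\Gamma}$ is nonempty with $\lambda_{\Gamma} > n+1$; the statement is vacuous or trivial otherwise.

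Concretely, I would proceed as follows. First, invoke Proposition~\ref{Depth Trace Ineq} to get $d \ge \min\{(\lambda_{\Gamma}-n-1)^{1/4}, (\lambda_{\Gamma}-n-1)^{1/2}\}$. Second, invoke Proposition~\ref{Trace Sys ineq} to get $\lambda_{\Gamma} - n - 1 \ge -2n + \sqrt{2}\,e^{\sy/4} = -2n + s'$. Third, observe that for $a \le b$ with $a, b \ge 0$ one has $a^{1/4} \le b^{1/4}$ and $a^{1/2} \le b^{1/2}$, hence $\min\{a^{1/4}, a^{1/2}\} \le \min\{b^{1/4}, b^{1/2}\}$; applying this with $a = -2n+s'$ (when nonnegative) and $b = \lambda_{\Gamma}-n-1$ yields
\begin{align*}
    d \ge \min\{(\lambda_{\Gamma}-n-1)^{1/4}, (\lambda_{\Gamma}-n-1)^{1/2}\} \ge \min\{(-2n+s')^{1/4}, (-2n+s')^{1/2}\},
\end{align*}
which is exactly the claimed bound.

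There is essentially no hard part here — the theorem is a bookkeeping corollary of the two propositions, and the proof is a two-line chaining of inequalities plus a monotonicity remark. The only thing that warrants a sentence of care is the edge case: if $s' \le 2n$ (equivalently $\sy$ small relative to $n$), then $-2n+s' \le 0$ and the right-hand side should be interpreted as $0$ (or the statement as vacuous), which is consistent with $d \ge 0$ always holding; in the regime of interest to the paper, $\sy$ is taken large enough that $s' > 2n$ and both exponents give genuine positive lower bounds, with $(-2n+s')^{1/4}$ being the binding one once $-2n+s' \ge 1$. I do not anticipate any obstacle beyond stating these inequalities in the right order.
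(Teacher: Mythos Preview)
Your proposal is correct and follows exactly the approach of the paper: the theorem is stated as an immediate consequence of combining Proposition~\ref{Trace Sys ineq} with Proposition~\ref{Depth Trace Ineq}, and your chaining of the two inequalities (together with the monotonicity remark) is precisely the intended argument. Your additional discussion of the edge case $s'\le 2n$ is more careful than the paper itself, but not a departure from its method.
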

\begin{proof}
    Combining Proposition \ref{Trace Sys ineq} with Proposition \ref{Depth Trace Ineq} implies the claim. 
\end{proof}
Direct computation gives the following corollary which will be used later to bound the uniform depth of cusps in terms of $\sy$:

\begin{Corollary}\label{depth Bounds 2}
    If $\sy  \ge 4\ln\bi(5n+(4\pi)^4\bi),$ then
 \begin{align*}  
     d 
     > 
    e^{\sy/16}
    >
     4\pi. 
 \end{align*}

 Additionally, if  $\sy  \ge 4\ln\bi(5n+(8\pi)^4\bi),$ then 
 \begin{align*} 
     d 
     > 
    e^{\sy/16}
    >
     8\pi. 
 \end{align*}
\end{Corollary}

\subsection*{Systole in coverings} 

In this subsection, we study the behavior of the systole under finite \'etale coverings. In Proposition~\ref{positive sys}, we proved that the systole is always positive for non-uniform lattices. In Proposition~\ref{sys-growth}, we show that it is possible to increase the systole by passing to a suitable cover. We also give an example in Proposition~\ref{Picard Modular Surface Ex} to illustrate that this behavior is not limited to normal covers. To establish these results, we first provide a few preliminary lemmas

Parker's generalization of Shimizu's lemma \cite[Page 442]{parker1998volumes} tells us that for a torsion-free lattice, the uniform depth of cusps is at least $2$. Hence, the horoball $\Gamma_i \backslash B_{i}(t_{i}/2)$ is called the canonical horoball around the cusp $c_i$. In particular, if the cusp $c_i$ corresponds to the equivalence class of $q_{\infty}$, then the canonical horoball 
\[
\Gamma_{\infty}\backslash\{(\zeta,v,u)\mid u> t_{\infty}/2\}
\] 
injects into $X$.
 The core of $X$ is the set obtained by removing the canonical horoballs from $X$, and we denote it by $X_{\mathrm{core}}$.
Note that since $X$ is connected and the canonical horoballs are disjoint, the core of $X$ must be nonempty.

\begin{Lemma}\label{Geodesic intersects core} 
Every closed geodesic of $X$ intersects $X_{\mathrm{core}}$.  
\end{Lemma}

\begin{proof}
Suppose not. Then there exists a semisimple element $\gamma \in \Gamma$ corresponding to a closed geodesic that does not intersect $X_{\mathrm{core}}$. Since the canonical horoballs around cusps are disjoint open sets and the geodesic is connected, it must be fully contained in a canonical horoball around a cusp. Pull back the closed geodesic to the (Siegel model of) complex ball $\mathbb{B}^n.$ By conjugating the lattice if necessary, we may assume this canonical horoball is the one around $q_{\infty}.$ 

 Let $c_\gamma$ denote the absolute $c$-entry of $\gamma$ as defined in Definition \ref{well defined rep}(ii). It follows from Lemma \ref{hyperbolic stab} that $q_{\infty}$ has only a parabolic stabilizer in the lattice. Therefore, Lemma \ref{Stab of inifinty} implies that $c_\gamma \neq 0.$ Let $z$ be a point in the canonical horoball around $q_\infty$ such that both $z$ and $\gamma z$ lie in this horoball. By Lemma \ref{shim} and Remark \ref{min c}, we obtain
\[
u(z)u(\gamma z) \leq \frac{4}{|c_\gamma|^2} \leq \frac{t_{\infty}^2}{4}.
\]
This contradicts the fact that both $z$ and $\gamma z$ lie in the canonical horoball $\{(\zeta,v,u)\mid u> t_{\infty}/2\}.$  
\end{proof}

\begin{Lemma}\label{finite dispalacment}
Let $z_0 \in \mathbb{B}^n$ and let $R>0$. Then
$
\{\gamma \in \Gamma \mid d(z_0,\gamma \cdot z_0)\le R\}
$
is finite.
\end{Lemma}
\begin{proof}
Suppose not. Then there exists a sequence $\{\gamma_m\}_{m=1}^{\infty}$ with $d(z_0,\gamma_m \cdot z_0)\le R$. Since the closed ball of radius $R$ is compact, there exists a subsequence, which we again denote by $\{\gamma_m\}_{m=1}^{\infty}$, such that the set $\{\gamma_m \cdot z_0\}_{m=1}^{\infty}$ has an accumulation point $z \in \mathbb{B}^n$. This contradicts the discontinuity of the action of $\Gamma$ (see Remark \ref{discontinous}).
\end{proof}

\begin{Proposition}\label{finite-short-hyperbolics}
For every \(L>0\), there are only finitely many \(\Gamma\)-conjugacy classes of  $\Gamma$ corresponding to closed geodesics of length less than $L.$
\end{Proposition}

\begin{proof}
Fix a fundamental domain $\Sigma$ for $X$ in the universal cover $\mathbb{B}^n.$ Pull back the canonical horoballs to the universal cover and consider the part of $\Sigma$ lying outside them. Denote this set by $\Sigma_{\mathrm{core}}$. Note that $\Sigma_{\mathrm{core}}$ maps to $X_{\mathrm{core}}$, therefore it has to be nonempty. Also, since we removed the neighborhoods of the cusps, this set is bounded. Fix a base point $z_0 \in \Sigma_{\mathrm{core}}$ and let 
$D=\sup\{d(z_0,z)\mid z\in \Sigma_{\mathrm{core}}\}$
be the maximal distance from $z_0$ to a point in $\Sigma_{\mathrm{core}}$.

Let $\gamma\in\Gamma_s$ with $\ell(\gamma)\le L$, and let $A_\gamma\subset\mathbb{B}^n$ denote its axis. The projection of $A_\gamma$ to $X$ is a closed geodesic of length $\ell(\gamma)\le L$, hence it meets $X_{\mathrm{core}}$ by Lemma \ref{Geodesic intersects core}. Therefore there exists $g\in\Gamma$ such that the axis of the conjugate $g\gamma g^{-1}$ meets $\Sigma_{\mathrm{core}}$; in particular we may choose a point
$p \in A_{g\gamma g^{-1}}\cap \Sigma_{\mathrm{core}}.$ Consider the displacement of $z_0$ by $g\gamma g^{-1}.$ Join $z_0$ to $p$, move along the axis by at most $\ell(\gamma)$ (the translation length of $g\gamma g^{-1}$), and then join the endpoint back to $g\gamma g^{-1}z_0$. By the triangle inequality we obtain
\begin{equation*}\label{eq:displacement-bound}
d\!\left(z_0,\;g\gamma g^{-1}z_0\right) 
\le d(z_0,p)+d(p,g\gamma g^{-1}p)+d(g\gamma g^{-1}p, g\gamma g^{-1}z_0) 
\le 2D+L,
\end{equation*}
since $d(g\gamma g^{-1}p, g\gamma g^{-1}z_0)=d(p,z_0)\le D$ and $d(p,g\gamma g^{-1}p)=\ell(\gamma)\le L$.  

Set $R=2D+L$ and define $S_R := \{\gamma \in \Gamma \mid d(z_0,\gamma z_0)\le R\}.$
By Lemma \ref{finite dispalacment}, the set $S_R$ is finite. Hence every $\gamma\in\Gamma$ with $\ell(\gamma)\le L$ is conjugate to some element of the finite set $S_R$. It follows that there are only finitely many conjugacy classes in $\Gamma$ with translation length at most $L.$

\end{proof}

\begin{Remark}\label{inf is min} Fix \(\epsilon>0\). Applying Proposition \ref{finite-short-hyperbolics} with \(L:=\sy+\epsilon\) shows that there are only finitely many lengths of closed geodesics in $X$ not exceeding $L$; that is,
the set \[ \{\ell(\gamma) \mid \gamma \in \Gamma_s,\ \ell(\gamma)\le L\}\]is finite. Therefore, the infimum in the definition of the systole \eqref{sys def} is realized.
\end{Remark}

\begin{definition}\label{cofinal def}
A cofinal normal tower of $X$ is a sequence $\{X_i\}_{i=1}^{\infty}$ of finite \'etale Galois coverings of $X = X_1$, corresponding to a nested sequence of lattices $\{\Gamma_i\}_{i=1}^{\infty}$, where each $\Gamma_i$ is a normal subgroup of $\Gamma_1$, 
\[
\Gamma_{i+1} \subset \Gamma_i \quad \text{and} \quad \bigcap_{i=1}^{\infty} \Gamma_i = \{1\}.
\]

\end{definition}

Since every lattice in $\mathrm{PU}(n,1)$ is finitely generated \cite[Theorem~0.9]{garland1970fundamental}, Malcev's theorem \cite{mal1965faithful} (see \cite[Theorem 7.6.8]{ratcliffe2006foundations}) implies that the lattice $\Gamma$ is residually finite. In particular, there exists a cofinal normal tower for $X$.

\begin{Proposition}\label{sys-growth}
For every $X$, there exists a finite cover $X'$ of $X$ such that $\operatorname{sys}(X')$ is sufficiently large. Moreover, in any cofinal normal tower of coverings $\{X_i\}_{i=1}^{\infty}$ with $X_1 = X$, we have
\[
\operatorname{sys}(X_i) \longrightarrow \infty \quad \text{as } i\to\infty.
\]
\end{Proposition}

\begin{proof}
Fix $L > 0.$ Let $\{X_i\}_{i=1}^{\infty}$ be a cofinal normal tower of $X_1 = X$ with fundamental groups $\{\Gamma_i\}_{i=1}^{\infty}$.  
By Proposition~\ref{finite-short-hyperbolics}, there are only finitely many conjugacy classes in $\Gamma$ corresponding to closed geodesics of length less than $L$. Choose representatives $\gamma_1,\dots,\gamma_m \in \Gamma$ of these classes.  

Since $\bigcap_{i=1}^{\infty}\Gamma_i=\{1\}$ and $\Gamma$ is residually finite, there exists $i_0 > 0$ such that for all $i > i_0$, the subgroup $\Gamma_i$ contains none of the elements $\gamma_1,\dots,\gamma_m$. Because each $\Gamma_i$ is normal in $\Gamma$, it also avoids all of their conjugates. Thus, for such $i$, every nontrivial element of $\Gamma_i$ has a translation length of at least $L$, and hence
$
\operatorname{sys}(X_i) \ge L.
$
Since $L$ was arbitrary, we conclude that $\operatorname{sys}(X_i)\to\infty$ as $i\to\infty$.

The existence of some $X'$ with sufficiently large $\operatorname{sys}(X')$ follows from the fact that $\Gamma$ is residually finite and therefore admits a cofinal normal tower. 
\end{proof}

We emphasize that the phenomenon of the systole becoming arbitrarily large is not limited to coverings from normal cofinal towers. It is enough that the traces of hyperbolic elements become arbitrarily large (see Lemma \ref{length trace}). To illustrate
this, we provide an example of covers that are not normal but for which the systole tends to infinity:

\begin{Proposition}\label{Picard Modular Surface Ex}
Let \(K=\mathbb Q(i)\) with ring of integers \(\mathcal{O}_K=\mathbb Z[i]\), and fix the embedding \(\iota:K\hookrightarrow\C\) with \(\iota(i)=i\). From now on, we regard $\mathcal{O}_K$ as a subring of $\C$
via $\iota.$

Let \(q\equiv 3\pmod 4\) be prime, so \((q)\subset O_K\) is a prime ideal; set \(\mathfrak p=(q)\). We will use the group $\mathrm{U}(Q)$ introduced in the equation \eqref{def: U(Q)}. We will write an element $h\in \mathrm{U}(Q)$ in the form \eqref{MatrixPu}: 
\[
h=\begin{bmatrix}
a & \tau & b\\
\alpha & f & \beta\\
c & \delta & e
\end{bmatrix}
\quad
\text{with } a,b,c,e\in K,\ \ \tau,\alpha,\beta,\delta\in K,\ \ f\in K^{*}.
\]
Consider the group
\(\mathrm{U}(Q)(\mathcal{O}_K):=\mathrm{U}(Q)\cap\mathrm{GL}_3(\mathcal{O}_K)
\)
and define its subgroups by 
\[
\widetilde\Gamma_1(\mathfrak p):=\Big\{\,h\in\mathrm{U}(Q)(\mathcal{O}_K)\,:\,
h\equiv
\begin{bmatrix}
1 & * & *\\
0 & 1 & *\\
0 & 0 & 1
\end{bmatrix}\pmod{\mathfrak p}\Big\}.
\]
Let $\pi:\mathrm{U}(Q)\to \mathrm{PU}(Q)=\mathrm{U}(Q)/\{ \mu I:|\mu|=1\}$ be the natural projection, and set
\[
\Gamma_1(\mathfrak p):=\pi\big(\widetilde\Gamma_1(\mathfrak p)\big)\subset \mathrm{PU}(Q).
\]
Then we define the Picard modular surface at level $\mathfrak p$ as
\[
X_1(\mathfrak p):=\Gamma_1(\mathfrak p)\backslash\mathbb{B}^2,
\]
which is a finite cover of
\(
X(1):=\pi\big(\mathrm{U}(Q)(\mathcal{O}_K)\big)\backslash\mathbb{B}^2.
\)
Then the following hold: 
\begin{enumerate}
\item The covering \(X_1(\mathfrak p)\to X(1)\) is not normal.
\item If \(\gamma\in\Gamma_1(\mathfrak p)\) is hyperbolic, then
\[
|\operatorname{tr}(\gamma)|\;\ge q-3,
\]
where the absolute trace of $\gamma$ is understood as in Definition \ref{well defined rep}.

\item 
\(\operatorname{sys}\big(X_1(\mathfrak p)\big)\to\infty\) as \(q\to\infty\).
\end{enumerate}
\end{Proposition}

\begin{proof}
\begin{enumerate}
    \item Let
\[
g=\begin{bmatrix}1&0&i\\[2pt]0&1&0\\0&0&1\end{bmatrix}\in\widetilde{\Gamma}_1(\mathfrak p),\qquad
h=\begin{bmatrix}0&0&1\\[2pt]0&1&0\\[2pt]1&0&0\end{bmatrix}\in\mathrm{U}(Q)(\mathcal{O}_K).
\]
Then
\[hgh^{-1}=\begin{bmatrix}1&0&0\\[2pt]0&1&0\\i&0&1\end{bmatrix}, \]
whose lower-left entry is a unit modulo \(\mathfrak p\). Hence
\(hgh^{-1}\notin\widetilde{\Gamma}_1(\mathfrak p)\), and therefore its image
under \(\pi\) does not lie in \(\Gamma_1(\mathfrak p)\). This shows that
\(\Gamma_1(\mathfrak p)\) is not normal in \(\pi\big(\mathrm{U}(Q)(\mathcal{O}_K)\big)\).

\item Let $\gamma\in\Gamma_1(\mathfrak p)$ be hyperbolic, and choose a lift
$\widetilde\gamma\in\widetilde\Gamma_1(\mathfrak p)$ of $\gamma.$ By the definition of $\widetilde\Gamma_1(\mathfrak p)$ we have
$\operatorname{tr}(\widetilde\gamma)\equiv 3 \pmod{\mathfrak p},$
so
$t:=\operatorname{tr}(\widetilde\gamma)-3\in\mathfrak p.$ Since $\gamma$ is hyperbolic, $\widetilde\gamma$ is not unipotent and thus $t\neq0$.
Because $\mathfrak p=(q)$ with $q\equiv 3\pmod 4$ is prime in $\mathcal O_K=\mathbb Z[i]$,
we may write $t=q\,w$ with $w\in\mathbb Z[i]\setminus\{0\}$. Taking complex absolute values yields
\[
\bigl|\operatorname{tr}(\widetilde\gamma)-3\bigr|=|t|=q\,|w|\ge q,
\]
since $|w|\ge1$ for every nonzero $w\in\mathbb Z[i]$. This implies that:
$\bigl|\operatorname{tr}(\gamma)\bigr|=\bigl|\operatorname{tr}(\widetilde\gamma)\bigr|\ge q-3.$

\item This follows from the second part and Lemma \ref{length trace}.
\end{enumerate}

\end{proof}

\section{Thick-thin decomposition}\label{thin-thick decomposition}
In this section, we introduce a version of the thick-thin decomposition relative to the systole, which differs from Margulis' decomposition as it depends on the lattice $\Gamma$. The main goal of this section is to prove Theorem \ref{thick}, which states that the thin part of $X$ contains no subvariety 
(by a subvariety of $X$ we mean the intersection of a closed, irreducible, 
positive-dimensional algebraic subvariety of the projective variety $\Xb$ 
with $X$, such that the intersection is nonempty).

Let $c_i$ be a cusp of $X$ with unipotent stabilizer $\Gamma_{i}.$
Fix $\epsilon>0.$
Consider the set
$$
\tilde{U}_{i, \epsilon}= \{x\in \mathbb{B}^n| \exists g\in \Gamma_{i}, d(x,g\cdot x)< \epsilon\}.
$$
We define the $\epsilon$-thin neighborhood around the cusp $c_i$ as the set $U_{i, \epsilon}:=\Gamma_i \backslash\tilde{U}_{i, \epsilon}.$ Also, we fix $\rho=\sy/2$ and define the thin part of $X$ as the union of all $\rho$-thin neighborhood around cusps of $X:$
$$X_{\mathrm{thin}}:=\dis\cup_{i=1}^{k}{U_{i,\rho}},$$
where $k$ is the number of cusps. The following Proposition shows that $X_{\mathrm{thin}}$ is actually the disjoint union of the $\rho$-thin neighborhood around cusps:

\begin{Proposition}\label{disjoint Thin}
    If $\epsilon<\sy/2$, then $U_{i, \epsilon} \cap U_{j, \epsilon}=\varnothing$ for $i\neq j.$
\end{Proposition}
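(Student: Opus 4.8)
The plan is to argue by contradiction using the structure of horoballs. Suppose $x \in U_{i,\epsilon} \cap U_{j,\epsilon}$ for some $i \neq j$. Lifting to $\mathbb{B}^n$, this means there is a point $\tilde{x} \in \mathbb{B}^n$ together with parabolic elements $g \in \Gamma_i'$ and $h \in \Gamma_j'$ (conjugates of the cusp stabilizers, i.e. stabilizers of two \emph{inequivalent} parabolic fixed points $p_i, p_j$) such that $d(\tilde{x}, g\tilde{x}) < \epsilon$ and $d(\tilde{x}, h\tilde{x}) < \epsilon$, where $\epsilon < \operatorname{sys}(X)/2$. The key point is that a parabolic $g$ stabilizing $p_i$ moves a point $\tilde{x}$ a small amount precisely when $\tilde{x}$ lies deep inside a horoball based at $p_i$; so the two small-displacement conditions force $\tilde{x}$ to lie simultaneously in a deep horoball at $p_i$ and a deep horoball at $p_j$. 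The strategy is to translate ``$\tilde{x}$ lies in both horoballs'' into a quantitative statement, then produce from $g$ and $h$ a \emph{hyperbolic} element of $\Gamma$ whose length is smaller than $\operatorname{sys}(X)$, a contradiction.

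\textbf{Key steps.} First, normalize via $\operatorname{PU}(Q)$ so that $p_i = q_\infty$ and $p_j = q_0$; then $g \in \Gamma_\infty$ and $h \in \Gamma_0$ are Heisenberg translations fixing $q_\infty$ and $q_0$ respectively, in the matrix forms \eqref{Matrix for Hisenberg}. Second, I would estimate the displacement of a Heisenberg translation in terms of the height coordinate $u(\tilde x)$: using the metric formula \eqref{metric}, if $g$ fixes $q_\infty$ then $d(\tilde x, g\tilde x)$ is small only when $u(\tilde x)$ is large (roughly, $d(\tilde x, g \tilde x) \gtrsim$ a decreasing function of $u$, so $d < \epsilon$ forces $u(\tilde x) > $ some explicit height $\tilde u_\infty$); symmetrically, $d(\tilde x, h \tilde x) < \epsilon$ forces the height coordinate of $\tilde x$ relative to $q_0$ to exceed some $\tilde u_0$, i.e. $\tilde x$ lies in the horoball $B_0(\tilde u_0)$. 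Third — the crux — I would show these two horoballs cannot overlap, hence no such $\tilde x$ exists. For this, Lemma \ref{disjoint1} says $B_0(\tilde u_0)$ and $B_\infty(\tilde u_\infty)$ are disjoint iff $\tilde u_0 \tilde u_\infty \ge 4$; and the product $\tilde u_0 \tilde u_\infty$ can be bounded below by $t_0 t_\infty$ times a factor controlled by $\epsilon$, where the commutator trace identity of Lemma \ref{trace} applied to $[g, h]$ (or to the shortest vertical translations) shows $[g,h]$ is hyperbolic with $\operatorname{tr}[g,h] = n+1 + |t_0 t_\infty c / \text{(const)}|^2$, so Proposition \ref{Length} converts ``small $\epsilon$'' into ``$\operatorname{sys}(X)$ would have to be smaller than $2\epsilon$,'' contradicting $\epsilon < \operatorname{sys}(X)/2$.

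\textbf{Main obstacle.} The delicate point is the bookkeeping that converts the two displacement inequalities $d(\tilde x, g\tilde x) < \epsilon$, $d(\tilde x, h \tilde x) < \epsilon$ into the clean disjointness-violating inequality, keeping track of the correct constants so that the threshold is exactly $\operatorname{sys}(X)/2$ and not some lossy multiple of it. Concretely, one needs that the horoball a point is forced into by a Heisenberg displacement of size $< \epsilon$ has height related to the vertical translation length $t$ and to $\sinh(\epsilon/4)$ (or $\cosh(\epsilon/2)$) in just the right way, and that combining the two sides with Lemma \ref{disjoint1} reproduces the hyperbolicity bound of Proposition \ref{Length} applied to the commutator. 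I expect this to reduce, after the normalization, to a one-variable inequality comparing $\cosh$-type expressions, which is routine once set up but is where all the care lies; the conceptual content — small parabolic displacement $\Leftrightarrow$ deep in a horoball, and two deep horoballs at inequivalent cusps force a short hyperbolic commutator — is exactly what makes $\epsilon < \operatorname{sys}(X)/2$ the natural threshold.
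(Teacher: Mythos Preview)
Your overall strategy—produce a hyperbolic element of $\Gamma$ from the two parabolics and contradict the definition of the systole—matches the paper's, but the route you propose through horoballs and commutators is considerably more indirect, and the detour is likely to cost you the sharp threshold $\operatorname{sys}(X)/2$.

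The paper's argument is essentially two lines once the normalization is in place. With $\gamma_1=(\tau,t)\in\Gamma_\infty$ and $\gamma_2=(\sigma,s)\in\Gamma_0$, one computes the trace of the \emph{product} (not the commutator): summing gives
\[
|\operatorname{tr}(g_\infty g_0)|+|\operatorname{tr}(g_\infty^{-1}g_0)|\ \ge\ \bigl|\operatorname{tr}\bigl((g_\infty+g_\infty^{-1})g_0\bigr)\bigr|\ \ge\ 2(n+1)+\tfrac12|\tau|^2|\sigma|^2,
\]
so at least one of $\gamma_1\gamma_2$, $\gamma_1^{-1}\gamma_2$ has $|\operatorname{tr}|>n+1$ and is therefore hyperbolic. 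Then the triangle inequality finishes it:
\[
d(\tilde x,\gamma_1^{\pm1}\gamma_2\cdot\tilde x)\ \le\ d(\tilde x,\gamma_1^{\pm1}\cdot\tilde x)+d(\tilde x,\gamma_2\cdot\tilde x)\ <\ 2\epsilon\ <\ \operatorname{sys}(X),
\]
contradicting that every hyperbolic element displaces every point by at least $\operatorname{sys}(X)$. No horoballs, no Lemma~\ref{disjoint1}, no Proposition~\ref{Length}; the factor of $2$ falls out of the triangle inequality applied to a word of length two.

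Your plan instead converts ``small displacement'' into ``deep in a horoball,'' intersects horoballs via Lemma~\ref{disjoint1}, and then reaches for a commutator trace. Each piece is reasonable in isolation—and this circle of ideas is exactly how the paper relates systole to \emph{depth of cusps} in \S\ref{Sec2}—but chaining them here is lossy. The commutator $[g,h]$ is a word of length four, so the triangle-inequality bound on its displacement at $\tilde x$ is $4\epsilon$, not $2\epsilon$; and the horoball-height estimates pass through $\cosh$-type expressions in $(\tau,t,\sigma,s,u)$ that do not collapse back to a clean $2\epsilon$. What you flag as a bookkeeping obstacle is really structural: the threshold $\operatorname{sys}(X)/2$ is precisely the triangle inequality applied to the shortest available hyperbolic word, namely the product $\gamma_1^{\pm1}\gamma_2$, and any longer word or indirect estimate will yield a worse constant.
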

\begin{proof}
     For the sake of the contradiction assume that $x\in U_{i, \epsilon} \cap U_{j, \epsilon}.$ This means that there exist $\gamma_1\in \Gamma_{i}$ and $\gamma_2\in \Gamma_{j}$ such that 
     $d(\tilde{x},\gamma_1 \cdot \tilde{x})<\epsilon$ and $d(\tilde{x},\gamma_2 \cdot \tilde{x})<\epsilon,$ where $\tilde{x}\in \mathbb{B}^n$ is a fiber of $x.$ This in particular implies that $d(\tilde{x}, \gamma_1^{-1} \cdot \tilde{x} )< \epsilon.$ 
     
     Since $\mathrm{PU}(Q)$ acts doubly transitively on the boundary, we can, if necessary, change coordinates (i.e., by conjugating the lattice) to identify $c_i$ and $c_j$ with the equivalence classes of points $q_\infty$ and $q_0$ on the boundary $\partial \Sn$. Note that the systole is invariant under this change of coordinates.

     We represent $\gamma_1$ and $\gamma_2$ by the  matrices $g_{\infty},g_{0} \in \mathrm{PU}(Q)$ respectively, where
\begin{align*}
    g_{\infty}=
\begin{bmatrix}
    1&-\tau^*&-(|\tau|+it)/2\\
    0&I_{n-1}&\tau\\
    0&0&1\\
\end{bmatrix},
\ \ \ \
    g_{0}=
\begin{bmatrix}
    1&0&0\\
    \sigma&I_{n-1}&0\\
    -(|\tau|+is)/2&-\sigma^*&1\\
\end{bmatrix}.
\end{align*}
Note that $\gamma_1^{-1}=(-\tau, -t)$ corresponds to $g^{-1}_{\infty}.$ We can write:
\begin{align*}
    \bi|\operatorname{tr}(g_{\infty}g_{0})\bi|+\bi|\operatorname{tr}(g^{-1}_{\infty}g_{0})\bi|
    &\ge
    \Big |\operatorname{tr}\bi( (g_{\infty}+g_{\infty}^{-1})g_{0} \bi) \Big| \\
    &= \Big | \operatorname{tr}\Big(
    \begin{bmatrix}
       2&0&-|\tau|^2 \\
       0&2 I_{n-1}&0  \\
       0&0&2  \\
    \end{bmatrix}
    \begin{bmatrix}
       1&0&0 \\
       \sigma&I_{n-1}&0  \\
       (-|\sigma|^2+is)/2&-\sigma^*&1  \\
    \end{bmatrix} \Big) \Big| \\
    &= \bi|2(n+1)+\frac{1}{2}|\tau|^2(|\sigma|^2-is) \bi|\\
    &\ge 2(n+1)+\frac{1}{2}|\tau|^2 |\sigma|^2.
\end{align*}
Hence, either $ | \operatorname{tr}( g_{\infty} g_{0})|\ge n+1+\frac{1}{4}|\tau|^2 |\sigma|^2 $ or $ | \operatorname{tr}( g^{-1}_{\infty} g_{0})|\ge n+1+\frac{1}{4}|\tau|^2 |\sigma|^2$ and therefore either $\gamma_1 \gamma_2 $ or $\gamma_1^{-1} \gamma_2$ must be hyperbolic. But this implies that either $d(\tilde{x}, \gamma_1 \gamma_2  \cdot \tilde{x}) \ge \sy $ or $d(\tilde{x}, \gamma_{1}^{-1} \gamma_2  \cdot \tilde{x})\ge \sy ,$ which is a contradiction because $d(\tilde{x},\gamma_2\cdot  \tilde{x})< \sy/2,$  $d(\tilde{x}, \gamma_1 \cdot \tilde{x}) < \sy/2,$ and $d(\tilde{x}, \gamma_1^{-1} \cdot \tilde{x})< \sy/2.$ 
\end{proof}
We define the thick part of $X$ as the complement of the thin part: 
$$X_{\mathrm{thick}}:=X \setminus \dis\cup_{i=1}^{k}{U_{i,\rho}}.$$
Since every point in a thin part of $X$ has a displacement less than $\sy/2,$ the following Proposition tells us that $X_{\mathrm{thick}}\neq \varnothing.$
\begin{Proposition} \label{radofThick}
    There exists $x\in X$ such that 
    $$
    \operatorname{inj_{x}(X)}
    \ge
    \sy /2.
    $$
\end{Proposition}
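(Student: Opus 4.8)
The plan is to produce a point $x \in X$ whose injectivity radius is at least $\sy/2$ by a volume/counting argument over the thick part, showing that if \emph{every} point of $X_{thick}$ had injectivity radius $< \sy/2$ then $X_{thick}$ would be empty — contradicting Proposition \ref{radofThick}'s companion statement that the thick part is nonempty. Actually, the cleanest route is more direct: take any $x \in X_{thick}$, whose existence we must first establish, and then argue that the failure of $\inj_x(X) \ge \sy/2$ forces $x$ into one of the thin neighborhoods $U_{i,\rho}$. So the real content splits into two parts: (a) $X_{thick} \neq \varnothing$, and (b) for $x \in X_{thick}$, $\inj_x(X) \ge \sy/2$.

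For part (b), which I expect to be the easier half once the definitions are unwound: suppose $x \in X_{thick}$ and pick a lift $\tildeb{x} \in \mathbb{B}^n$. If $\inj_x(X) < \sy/2$, then by definition there is $\gamma \in \Gamma \setminus \Gamma_x$ with $d(\tildeb{x}, \gamma \cdot \tildeb{x}) < \sy$. Since $\Gamma$ is torsion-free, $\Gamma_x$ is trivial, so $\gamma \neq \mathrm{id}$. Now $\gamma$ is elliptic, parabolic, or hyperbolic; it cannot be elliptic (torsion-free) and cannot be hyperbolic (a hyperbolic element moves every point by at least $\ell(\gamma) \ge \sy$, contradicting $d(\tildeb{x}, \gamma\tildeb{x}) < \sy$ — wait, this needs $d < \sy$, which we only get as $< 2\cdot\sy/2 = \sy$, so actually I should be more careful: I want displacement $< \sy/2$ to rule out hyperbolics, so I should use that $\inj_x(X) < \sy/2$ gives some $\gamma$ with $\frac12 d(\tildeb{x},\gamma\tildeb{x}) < \sy/2$, i.e. $d(\tildeb{x},\gamma\tildeb{x}) < \sy$; a hyperbolic $\gamma$ has $d(\tildeb x, \gamma \tildeb x) \ge \ell(\gamma) \ge \sy$, contradiction). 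Hence $\gamma$ is parabolic, so it lies in the unipotent stabilizer $\Gamma_i$ of some cusp $q_i$, and $d(\tildeb{x}, \gamma \cdot \tildeb{x}) < \sy$. But then, comparing with $\rho = \sy/2$: I need displacement $< \sy/2$ to conclude $x \in U_{i,\rho}$, whereas I only have $< \sy$. This gap is exactly what the definition of $\inj_x$ as $\frac12\inf$ is designed to close: $\inj_x(X) < \sy/2$ means $\frac12 d(\tildeb x, \gamma\tildeb x) < \sy/2$ for some $\gamma \notin \Gamma_x$, hence $d(\tildeb x,\gamma\tildeb x) < \sy$; rerun the trichotomy with the threshold $\sy$ rather than $\sy/2$ — elliptic excluded by torsion-freeness, hyperbolic excluded since $\ell(\gamma)\ge\sy$ would force $d \ge \sy$. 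So $\gamma$ parabolic with $d(\tildeb x, \gamma \tildeb x)<\sy$; this shows $x \in U_{i,\sy}$ but I want $x\in U_{i,\sy/2}=U_{i,\rho}$. The honest fix: the statement should be read together with how $X_{thick}$ is actually used (Theorem \ref{thick}), and the intended argument picks $x$ realizing $\inj(X_{thick})$; I will instead argue by contradiction that $\inf_{x\in X_{thick}}\inj_x(X)\ge \sy/2$, and if not, a minimizing (or near-minimizing) $x$ yields a parabolic $\gamma\in\Gamma_i$ with $2\inj_x(X) = d(\tildeb x,\gamma\tildeb x) < \sy/2$ — i.e., one carefully tracks that $\inj_x(X) < \sy/4$ would be needed, or one simply replaces $\rho$ by the right constant; in the writeup I will match constants to Proposition \ref{disjoint Thin} and the definition of $U_{i,\epsilon}$, using $\epsilon = \rho = \sy/2$ and the factor $\tfrac12$ in $\inj_x$.

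For part (a), that $X_{thick}\neq\varnothing$: this is where I expect the main obstacle, and it is essentially Proposition \ref{radofThick} restated, so I would prove both together. The strategy is to produce an explicit point far from all cusps. Cover $X = X_{thick} \cup \bigcup_i U_{i,\rho}$ and estimate: each thin neighborhood $U_{i,\rho}$ is contained in (the image of) a horoball $B_i(\tilde u)$ whose height is controlled from below by the uniform depth of cusps, which by Theorem \ref{systolDepth} grows like $e^{\sy/16}$ — so the thin neighborhoods have volume bounded in terms of $\sy$, while $X$ has fixed volume; for $\sy$ large this already shows $X_{thick}\neq\varnothing$. But the claim is stated with no largeness hypothesis on $\sy$. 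The correct unconditional argument: given a cusp $q_i$ with lift at $q_\infty$ and shortest vertical translation $(0,t_i)$, a point $(\zeta,v,u)$ in the lift of $U_{i,\rho}$ satisfies $d\big((\zeta,v,u),(\zeta,v+t_i,u)\big) < \rho = \sy/2$; using the metric formula \eqref{metric} this forces $u$ to be large (bounded below in terms of $t_i$ and $\sy$), i.e., the lift of $U_{i,\rho}$ sits inside a definite horoball $B_i(\tilde u_i)$ with $\tilde u_i = \tilde u_i(t_i,\sy)$. Then I invoke the disjointness/injection facts (Proposition \ref{height}, Lemma \ref{disjoint1}, Definition \ref{UniDep}) to see these horoballs are disjoint and inject, but $X$ has a point outside all of them — e.g. on the common boundary or interior of the region between horoballs — because a complete finite-volume ball quotient is never covered by its cusp horoball neighborhoods (the thick part in the Margulis sense is nonempty and one checks $X_{thick}$ here contains it). Concretely: pick any hyperbolic axis or any compact core point; more cleanly, pick $x$ not in any $\overline{B_i(\tilde u_i)}$, which exists since $\bigcup_i \Gamma_i\backslash B_i(\tilde u_i) \subsetneq X$ (they inject and are disjoint, and their complement is the nonempty compact core). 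For such $x$, run part (b): any $\gamma\notin\Gamma_x$ with small displacement at $\tildeb x$ would be parabolic in some $\Gamma_i$, forcing $\tildeb x \in \tildeb U_{i,\rho}\subset B_i(\tilde u_i)$, contradiction. Hence $\inj_x(X)\ge\sy/2$, which is exactly the desired conclusion.

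The single hardest technical point is the quantitative step turning "$(\zeta,v,u)$ is $\rho$-moved by a vertical translation of length $t_i$" into "$u \ge \tilde u_i$": this is a direct but fiddly estimate using \eqref{metric} with $z_1 = (\zeta,v,u)$, $z_2=(\zeta,v+t_i,u)$, giving $d(z_1,z_2) = 2\cosh^{-1}\!\big(1 + \tfrac{t_i^2}{4u^2}\big)$, so $d(z_1,z_2)<\rho$ is equivalent to $\tfrac{t_i^2}{4u^2} < \cosh(\rho/2)-1$, i.e. $u > \tfrac{t_i}{2\sqrt{\cosh(\rho/2)-1}} =: \tilde u_i$. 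This cleanly bounds the thin neighborhood by an explicit horoball with no largeness assumption, and everything downstream (disjointness, injection, nonempty complement) then follows from the lemmas already in the text, so I would present it in that order: the vertical-translation estimate first, then the horoball containment, then the compact-core nonemptiness, then the trichotomy on $\gamma$.
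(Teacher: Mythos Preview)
Your approach is genuinely different from the paper's and considerably more elaborate. The paper's proof is essentially three lines: assume for contradiction that $\inj_x(X) < \sy/2$ for every $x$; since hyperbolic elements displace every point by at least $\sy$, the nontrivial $\gamma$ realizing the small displacement at each $\tilde x$ must be parabolic, so $X$ is covered by the thin cusp neighborhoods; by Proposition~\ref{disjoint Thin} these are pairwise disjoint nonempty open sets, contradicting the connectedness of $X$. No horoball heights, no compact-core argument, no metric computation with the Siegel model is needed---just the trichotomy (which you already have) together with the disjointness statement already proved.

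Your route via explicit horoball containment could perhaps be completed, but as written it has a real gap beyond the factor-of-$2$ bookkeeping you flag. The inclusion $\tilde U_{i,\rho} \subset B_i(\tilde u_i)$ is argued only for the shortest \emph{vertical} translation $(0,t_i)$: you compute $d\big((\zeta,v,u),(\zeta,v+t_i,u)\big) = 2\cosh^{-1}\!\big(1 + t_i^2/(4u^2)\big)$ and solve for $u$. But by definition a point lies in $\tilde U_{i,\rho}$ as soon as \emph{some} $g \in \Gamma_i$ displaces it by less than $\rho$, and $\Gamma_i$ contains general Heisenberg translations $(\tau,t)$ with $\tau \neq 0$. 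For those, the displacement at $(\zeta,v,u)$ depends on $\zeta$ through $\operatorname{Im}\langle\tau,\zeta\rangle$ and is not a function of $u$ alone, so your formula does not apply and the claimed horoball bound does not follow. You would need a separate argument controlling $u$ uniformly over all nontrivial $g \in \Gamma_i$ (using discreteness of $\Gamma_i$ in the Heisenberg group), which is substantially more work than the connectedness argument the paper uses.

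Finally, the factor-of-$2$ tension you correctly identify---$\inj_x < \sy/2$ only yields a parabolic with displacement $< \sy$, landing $x$ in $U_{i,\sy}$ rather than $U_{i,\sy/2}$---is never actually resolved in your writeup; saying you will ``match constants'' is not a fix. In your framework this is a genuine obstacle, since ``$x$ outside all horoballs'' only excludes parabolic displacement $< \sy/2$, which gives $\inj_x \ge \sy/4$ rather than $\sy/2$.
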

\begin{proof}
     Note that if $\gamma\in \Gamma$ is not unipotent, then it is semi-simple and for such $\gamma$ and every $x\in \mathbb{B}^n,$ we have $d(x,\gamma\cdot x)\ge \sy.$

   Now, assume for the sake of contradiction that  $\operatorname{inj}_{x}(X)
    <
    \sy/2$ for all $x\in X.$
    Therefore, for all $x$ there is a unipotent element $\gamma\in \Gamma$ such that $d(x,\gamma\cdot x)<
    \sy/2.$ This means that the thin part of $X$ covers all $X.$ However, this is not possible because the thin part of $X$ is a disjoint union of open sets by Proposition \ref{disjoint Thin} but $X$ is connected.    
   
\end{proof}

Now we show that the monodromy of the $\rho$-thin part of $X$ around each cusp is in the stabilizer of that cusp: 

\begin{Lemma} \label{monomdromy} Suppose that $\epsilon<\sy/2 
.$
Let $U'_i$ be a connected component of $U_{i, \epsilon}$ and $\iota:U'_i \to X$ be the identity map. Then, $\iota_*(\pi_{1}(U_i'))$ is a subgroup of $ \Gamma_{i}.$
\end{Lemma}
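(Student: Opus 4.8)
The proof will be a soft covering--space argument, once the preimage of the thin neighbourhood in $\mathbb{B}^n$ has been understood. Since $\mathbb{B}^n$ is contractible and $\Gamma$ is torsion free it acts freely, so $\pi\colon\mathbb{B}^n\to X$ is the universal cover and $\pi_1(X)=\Gamma$. I would fix $x_0\in U_i'$ and a lift $\tilde x_0\in\mathbb{B}^n$; for a loop $\ell$ in $U_i'$ based at $x_0$, the class $\iota_*[\ell]$ is the unique $\gamma\in\Gamma$ with $\gamma\cdot\tilde x_0=\tilde\ell(1)$, where $\tilde\ell$ is the lift of $\ell$ starting at $\tilde x_0$, and $\tilde\ell$ stays inside $\pi^{-1}(U_i')\subset\pi^{-1}(U_{i,\epsilon})$. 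So everything reduces to locating the piece of $\pi^{-1}(U_{i,\epsilon})$ through $\tilde x_0$.

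First I would compute this preimage. For a parabolic fixed point $p$ of $\Gamma$ write $\tilde U_{p,\epsilon}=\{x\in\mathbb{B}^n:\exists\, g\in\Gamma_p,\ d(x,gx)<\epsilon\}$, an open set, where $\Gamma_p$ is the stabiliser of $p$. Since $\tilde U_{i,\epsilon}=\tilde U_{q_i,\epsilon}$ is $\Gamma_i$--invariant and each element of $\Gamma$ is an isometry, $\gamma\cdot\tilde U_{q_i,\epsilon}=\tilde U_{\gamma q_i,\epsilon}$, whence
\[
\pi^{-1}(U_{i,\epsilon})=\Gamma\cdot\tilde U_{q_i,\epsilon}=\bigcup_{p\in\Gamma\cdot q_i}\tilde U_{p,\epsilon},
\]
the union running over the infinitely many parabolic fixed points in the $\Gamma$--orbit of $q_i$. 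The heart of the matter is the claim that this union is \emph{disjoint}: $\tilde U_{p,\epsilon}\cap\tilde U_{p',\epsilon}=\varnothing$ for $p\ne p'$. This is the mechanism of Proposition \ref{disjoint Thin}: conjugating $p$ to $q_\infty$ and $p'$ to $q_0$, a common point of the two sets yields nontrivial Heisenberg translations $g$ at $q_\infty$ and $g'$ at $q_0$ each displacing that point by $<\epsilon$, and the trace computation there exhibits, among $g^{\pm1}g'$, an element of absolute trace $>n+1$ --- hence hyperbolic, hence of displacement $\ge\sy$ everywhere --- whereas the triangle inequality bounds its displacement at the common point by $2\epsilon\le\sy$, a contradiction. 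Note the argument does not care whether $p$ and $p'$ lie in the same $\Gamma$--orbit, so it covers the case needed here, and it uses only $\epsilon\le\sy/2$ (the displacement bound $<2\epsilon$ is strict while the hyperbolic lower bound is $\ge\sy$). The one genuine subtlety --- and the main obstacle of the lemma --- is the degenerate case in which $g$ or $g'$ is a vertical translation, where the clean lower bound $n+1+\tfrac14\|\tau\|^2\|\sigma\|^2$ of that proof collapses to $n+1$; there one applies the identity $\tr(hk)+\tr(h^{-1}k)=\tr\bigl((h+h^{-1})k\bigr)$ with the pair $(h,k)$ taken in whichever order makes the right--hand trace have nonzero imaginary part (equivalently, one inspects the traces of $g^{\pm1}g'$ and $g'^{\pm1}g$ by hand), again producing a hyperbolic element of displacement $<2\epsilon$ at the common point.

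Granting the disjointness, the lemma follows formally. The image $\tilde\ell([0,1])$ of a lifted loop is connected and sits inside the disjoint union of open sets $\bigcup_p\tilde U_{p,\epsilon}$, hence lies in the single $\tilde U_{p_0,\epsilon}$ containing $\tilde x_0$, a set $p_0=p_0(\tilde x_0)$ independent of $\ell$. Then $\gamma\cdot\tilde x_0=\tilde\ell(1)\in\tilde U_{p_0,\epsilon}$ for $\gamma=\iota_*[\ell]$, so $\tilde x_0\in\tilde U_{p_0,\epsilon}\cap\gamma^{-1}\tilde U_{p_0,\epsilon}=\tilde U_{p_0,\epsilon}\cap\tilde U_{\gamma^{-1}p_0,\epsilon}$, and disjointness forces $\gamma^{-1}p_0=p_0$, i.e.\ $\gamma\in\Gamma_{p_0}$. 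Hence $\iota_*\bigl(\pi_1(U_i')\bigr)\subseteq\Gamma_{p_0}$, a subgroup conjugate to $\Gamma_i$; and since each $\tilde U_{p,\epsilon}$ is in fact connected --- the displacement functions $x\mapsto d(x,gx)$ are convex on the Hadamard manifold $\mathbb{B}^n$, so each sublevel set $\{d(x,gx)<\epsilon\}$ is convex, and in the Siegel model at $q_\infty$ they all contain a common sub--ray of the vertical axis running out to the cusp --- the neighbourhood $U_{i,\epsilon}=\pi(\tilde U_{q_i,\epsilon})$ is itself connected, and taking $\tilde x_0\in\tilde U_{q_i,\epsilon}$ (so that $p_0=q_i$) gives $\iota_*(\pi_1(U_i'))\subseteq\Gamma_i$ on the nose.
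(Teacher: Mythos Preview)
Your proof is correct and follows the same covering-space strategy as the paper: confine the lifted loop to a single $\tilde U_{p,\epsilon}$ via disjointness of the sets $\tilde U_{p,\epsilon}$ in $\mathbb{B}^n$, then conclude that the deck transformation fixes $p$ (the paper phrases the last step by conjugating a witness $\gamma'\in\Gamma_i$ to place $\tilde y$ simultaneously in $\tilde U_{q_i,\epsilon}$ and $\tilde U_{\gamma q_i,\epsilon}$, but this is the same disjointness argument). You are in fact more careful than the paper on two points it leaves implicit---the disjointness needed here is for parabolic fixed points in the \emph{same} $\Gamma$-orbit (Proposition~\ref{disjoint Thin} is stated only for distinct cusps, though its mechanism applies verbatim), and the degenerate vertical-translation case in that trace estimate---and your convexity argument for the connectedness of $\tilde U_{p,\epsilon}$ is a correct extra that the paper does not need, since it only claims the result component by component.
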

\begin{proof} As $\epsilon<\sy/2,$ Proposition \ref{disjoint Thin} implies that $U_{i,\epsilon}$s are disjoint. Fix $x\in U'_{i}$ and let $\gamma:[0,1] \to X$ be a  loop at $x$ which is a representative of a class in $\iota_*(\pi_{1}(U'_i,x)).$ Let $\tilde{x}$ be a lift of $x$ to the universal cover $\mathbb{B}^n.$ As $\gamma$ is fully contained in $U'_{i,\epsilon},$ we can lift it to a path $\tilde{\gamma}:[0,1]\to \tilde{U}_{i,\epsilon}$ which starts at $\tilde{x}.$ Therefore, $\tilde{y}:=\gamma \cdot \tilde{x}=\tilde{\gamma}(1)$ is in $\tilde{U}_{i,\epsilon}.$ Let $\gamma' \in \Gamma_i$ such that 
$d(\tilde{x}, \gamma'\cdot \tilde{x})<\epsilon.$ By homogeneity, we have
$$
d(\tilde{y}, \gamma \gamma' \gamma^{-1} \cdot \tilde{y})
=
d(\gamma \cdot \tilde{x}, \gamma \gamma' \gamma^{-1} \gamma \cdot \tilde{x})
=
d(\tilde{x}, \gamma'\cdot \tilde{x})
<
\epsilon
.
$$
Since $\gamma \gamma' \gamma^{-1}$ fixes $\gamma(q_i),$ the previous inequality tells us that $\tilde{y}$ is in the $\epsilon-$thin neighborhood around $\gamma(q_i).$ On the other hand we know $\tilde{y} \in U_{i,\epsilon}.$ Since the thin neighborhoods around cusps are disjoint therefore $\gamma(q_i)=q_i,$ i.e., $\gamma \in \Gamma_i,$ as desired.
\end{proof}

Finally, we show that every subvariety of $X$ intersects with $X_{\mathrm{thick}}$, that is, every subvariety of $X$ contains a point whose injectivity radius in $X$ is larger than $\sy/2:$

\begin{theorem}\label{thick}
     Every subvariety of $\Xb$ either intersects with $X_{\mathrm{thick}}$ or fully contained in the boundary $D,$ where $\Xb=X\cup D.$
\end{theorem}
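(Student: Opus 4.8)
The plan is to show that a subvariety $V\subseteq\Xb$ not contained in $D$ meets $X_{thick}$, arguing by contradiction: assume $V\not\subseteq D$ but $V\cap X\subseteq X_{thin}=\bigcup_i U_{i,\rho}$. Since $V\cap X$ is connected (being a dense open subset of the irreducible $V$), and by Proposition \ref{disjoint Thin} the sets $U_{i,\rho}$ are pairwise disjoint open subsets, $V\cap X$ must lie entirely inside a single component $U'_i$ of one thin neighborhood $U_{i,\rho}$. The strategy is then to derive a contradiction from the fact that $V\cap X$ cannot ``escape to the cusp'' $q_i$ while $V$ itself, being a projective (hence compact) variety, has a limit point on $D$ that should lie over the cusp $q_i$.

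First I would pass to the universal cover. Lift a connected component of the preimage of $V\cap X$ in $U'_i$ to $\tildeb{U}_{i,\rho}\subset\mathbb{B}^n$; by Lemma \ref{monomdromy} the monodromy of $V\cap X$ (equivalently of $U'_i$) lies in the parabolic stabilizer $\Gamma_i$, so a lift $\tilde V$ of $V\cap X$ is preserved by $\Gamma_i$ and $V\cap X=\Gamma_i\backslash \tilde V$. Working in the Siegel domain model with $q_i=q_\infty$, the set $\tildeb{U}_{\infty,\rho}$ consists of points $z=(\zeta,v,u)$ moved a distance $<\rho$ by some Heisenberg translation in $\Gamma_\infty$; using the explicit displacement formula for Heisenberg translations together with the metric bound \eqref{metric}, one sees that $\tildeb{U}_{\infty,\rho}$ is contained in a horoball $B_\infty(\tilde u)$ for a suitable $\tilde u$ depending only on $\rho$ and the shortest generators of $\Gamma_\infty$ — i.e. the thin neighborhood has a uniform ``floor'' in the $u$-coordinate. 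Hence $\tilde V$, and therefore $V\cap X$, lies in a horoball neighborhood of the cusp $q_i$ that does not touch any other cusp.

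Next I would invoke the structure of the toroidal compactification near $D$. Near the boundary component $D_i$ corresponding to $q_i$, the compactification $\Xb$ is built from the partial quotient $\Gamma_\infty\backslash B_\infty(\tilde u)$ by filling in $D_i$, which is an abelian variety (the quotient of $\C^{n-1}$ by a lattice, fibered over by the vertical direction). The closure $\overline{V\cap X}=V$ in $\Xb$ therefore has all its boundary points on $D_i$. But $V$ is a projective variety of dimension $\ge 1$, so $V\cap D_i$ is a nonempty divisor on $V$ (or $V\subseteq D_i\subseteq D$, contradicting $V\not\subseteq D$); meanwhile, since $D_i$ has \emph{ample} conormal bundle $O_{D}(-D)$, one can use positivity to see that $V$ cannot be "pushed off to infinity" — more concretely, a multiple of $-D$ restricted to a compactification of $\tilde V/\Gamma_\infty$ would have to be both effective (supported on the added divisor) and anti-ample, which is impossible for a positive-dimensional $V$ unless $V$ actually meets the interior beyond the horoball. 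The cleanest route: if $V\cap X\subseteq B_i(\tilde u)/\Gamma_i$ for \emph{every} height $\tilde u$ below the floor, then $V\cap X\subseteq \bigcap_{\tilde u}B_i(\tilde u)/\Gamma_i=\varnothing$, forcing $V\subseteq D$ — contradiction; so it suffices to show the thin neighborhood is bounded away from $D$, i.e. $U_{i,\rho}\subseteq B_i(\tilde u_0)/\Gamma_i$ for a \emph{definite} $\tilde u_0>u_i$, and then combine with the fact (from the horoball description of $\Xb$) that $\overline{U_{i,\rho}}$ in $\Xb$ is still disjoint from $D_i$.

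The main obstacle I expect is the last point: quantifying that the $\rho$-thin neighborhood $U_{i,\rho}$ sits strictly inside a horoball whose closure in $\Xb$ avoids the boundary divisor $D$. This requires showing that the displacement of a Heisenberg translation $(\tau,t)$ at a point $(\zeta,v,u)$ tends to $0$ as $u\to\infty$ (so points near $D$ are moved a tiny amount and hence \emph{are} in the thin part) — wait, that is the wrong direction, so the actual content is the converse: one must check that being moved distance $<\rho$ by a \emph{fixed lattice} translation forces $u$ to be \emph{bounded below}, not above, using that $\Gamma_\infty$ is discrete so its shortest translation length is bounded away from $0$; then $U_{i,\rho}$ lies in $\{u > \tilde u_0\}$ only after also noting it cannot contain points with small $u$ either... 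In fact the correct statement is that $U_{i,\rho}$ is an \emph{open} neighborhood of the cusp containing the full horoball $B_i(u)$ for $u$ large, and a subvariety trapped in it must, by properness of $V\to\Xb$ and the horoball picture, limit onto $D_i$ and thus be contained in $D$ — the technical heart is making the "limits onto $D_i$" step rigorous using Mumford's description of $\Xb$ near $D$, which I would do by taking the closure $\overline{\tilde V}$ in the natural compactification of $B_\infty(\tilde u)$ (a disc bundle over $\C^{n-1}$) and checking it meets the zero section, which maps to $D_i$.
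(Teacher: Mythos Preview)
Your initial reduction is correct and matches the paper: by connectedness of $V\cap X$ and Proposition~\ref{disjoint Thin}, $V\cap X$ lies in a single connected component $U'_i$ of one $U_{i,\rho}$, and Lemma~\ref{monomdromy} shows that the monodromy of $U'_i$ sits in $\Gamma_i$. After this point, however, you miss the decisive idea, and none of your attempted routes closes the argument.

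The paper's proof is a short application of the maximum principle. Since Heisenberg translations preserve the height coordinate $u$, Lemma~\ref{monomdromy} implies that $-u$ descends to a well-defined function on $U'_i$; it is plurisubharmonic on $\Sn$ (cf.\ \S\ref{Neighborhood of cusps} and \cite[\S 2]{bakker2018kodaira}) and hence on $U'_i$. Restricted to the compact variety $V$, the maximum principle forces $-u$ (equivalently $u$) to be constant. But the K\"ahler form is $\omega_X=-2i\partial\bar\partial\log(u)$, so constancy of $u$ on $V$ gives $\omega_X|_V=0$ and hence $\operatorname{vol}_X(V)=0$, contradicting $\dim V>0$.

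Your alternative routes all fail for identifiable reasons. The ``cleanest route'' (showing $V\cap X\subseteq B_i(\tilde u)/\Gamma_i$ for every $\tilde u$) is false: $U_{i,\rho}$ has a definite floor in $u$ and is not contained in arbitrarily high horoballs. You then correctly observe that displacement by a Heisenberg translation tends to $0$ as $u\to\infty$, so $U_{i,\rho}$ is a genuine neighborhood of the cusp and its closure in $\Xb$ \emph{contains} $D_i$; your attempt to bound the thin part away from $D$ therefore goes in the wrong direction. Finally, your closure/disc-bundle argument at best establishes $V\cap D_i\neq\varnothing$, not $V\subseteq D$: a positive-dimensional compact subvariety can certainly sit inside the closure of a horoball neighborhood while meeting both $X$ and $D_i$, and ruling this out is precisely the content of the theorem. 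The ampleness-of-conormal idea is in the right spirit (it is tantamount to producing a psh weight near $D_i$), but you never turn it into an actual obstruction; the $\Gamma_i$-invariant plurisubharmonic function $-u$ is exactly the missing object that does this in one stroke.
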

           \begin{proof}
   For the sake of contradiction, assume that there exists a connected subvariety $V$ fully contained in $X_{\mathrm{thin}}$. By Proposition \ref{disjoint Thin}, $V$ must be contained within a thin neighborhood of a cusp of $X$, say \(c_i\). With a change of coordinates if necessary (i.e., by conjugating the lattice \(\Gamma\)), we can assume that \(c_i\) is identified with the equivalence class of \(q_\infty\). Note that the systole of \(X\) is invariant under this change of coordinates, and therefore so is \(X_{\mathrm{thin}}\).

 Consider the function $-u$ which is a plurisubharmonic function on the Siegel domain $\Sn$ and invariant under the action of stabilizer $\Gamma_{\infty}$(see \cite[\S 2]{bakker2018kodaira}). Therefore, it follows from Lemma \ref{monomdromy} that $-u$ is a well-defined function on every component of the thin part around $q_{\infty}.$ Hence, $-u$ is a well-defined plurisubharmonic function on $V.$ Notice that if a plurisubharmonic function achieves its maximum on a closed connected variety, it has to be constant(see \cite[page 272]{gunning2022analytic}). Since $V$ is compact, $-u$ must be constant on $V.$ However, it is not possible because the K\"ahler form on $X$ is induced by $-2i \partial \bar{\partial} \log(u)$(see \cite[Lemma 2.1] {bakker2018kodaira}) and if $-\log(u)$ were constant, the induced K\"ahler volume of $V$ would be zero.    
\end{proof}
 \section{Volume estimate of subvarieties}
In this section, we prove Theorem \ref{Hyperbolic Vol Int}, and Theorem \ref{LargeIntersection Int}. 
We first state Hwang and To's theorem in the following way:

\begin{theorem}(\cite[Theorem 1.1]{hwang2002volumes})\label{Hwanginequilty} Take $x \in X$ with injectivity radius $r=\operatorname{inj}_{x}(X).$ Let $B(x,r)$ be the Bergman ball of radius $r$ centered at $x.$ Suppose $V$ is an $m$-dimensional subvariety of $X$ passing through $x.$
Then, the following inequality holds:
\begin{align}  
\operatorname{vol_{X}(V\cap B(x, r) )}
&\ge
\frac{(4\pi)^m}{m!}\sinh^{2m}(r)\cdot \operatorname{mult}_x(V).
\end{align} 
\end{theorem}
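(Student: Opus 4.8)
The plan is to localise everything on the universal cover, reduce the statement to one about an analytic subvariety of a round ball in $\mathbb{B}^n$ passing through its centre, and then obtain the inequality from a monotonicity-of-volume estimate for complex analytic sets; this last step is exactly Hwang--To's theorem, whose argument I would follow. Concretely, since $\Gamma$ is torsion-free it acts freely, so for a lift $\tilde x$ of $x$ we have $\operatorname{inj}_x(X)=\tfrac12\inf_{\gamma\ne 1}d(\tilde x,\gamma\tilde x)$, and with $r=\operatorname{inj}_x(X)$ the balls $\gamma\cdot B(\tilde x,r)$, $\gamma\in\Gamma$, are pairwise disjoint; hence the covering $\pi\colon\mathbb{B}^n\to X$ restricts to an isometry of $B(\tilde x,r)$ onto $B(x,r)$. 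Thus $V\cap B(x,r)$ lifts to a pure $m$-dimensional analytic subset $W\subset B(\tilde x,r)$ with $\tilde x\in W$, $\operatorname{mult}_{\tilde x}W=\operatorname{mult}_x V$, and $\operatorname{vol}_X(V\cap B(x,r))=\int_W \omega^m/m!$, where $\omega$ is the Bergman K\"ahler form on $\mathbb{B}^n$; moreover, as $\dim W=m\ge 1$ and $W$ is closed in $B(\tilde x,r)$, its closure meets the boundary sphere. Using transitivity of $\operatorname{PU}(n,1)$ I would translate so that $\tilde x=0$; then $B(0,r)$ is the Euclidean ball $\{|z|<R\}$ for the radius $R$ corresponding to the Bergman radius $r$, and $\omega=i\partial\bar\partial\varphi$ for the explicit potential $\varphi=-c\log(1-|z|^2)$. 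It therefore suffices to prove
\[
\int_W\frac{\omega^m}{m!}\ \ge\ \frac{(4\pi)^m}{m!}\,\sinh^{2m}(r)\cdot\operatorname{mult}_0 W
\]
for such $W$.

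The heart of the matter is a monotonicity estimate. For $0<t\le r$ set $F_W(t)=\int_{W\cap B(0,t)}\omega^m/m!$ and $G(t)=\tfrac{(4\pi)^m}{m!}\sinh^{2m}(t)$, the relevant model volume profile; the displayed inequality is $F_W(r)\ge G(r)\operatorname{mult}_0 W$, which I would get by showing that $t\mapsto F_W(t)/G(t)$ is non-decreasing on $(0,r]$ with $\lim_{t\to 0^+}F_W(t)/G(t)=\operatorname{mult}_0 W$. The limit is the usual identification of the Lelong number of $[W]$ at $0$ with the multiplicity, using that $\omega$ agrees to second order at $0$ with a multiple of the Euclidean form. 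For the monotonicity one exploits the explicit structure of $\mathbb{B}^n$: the function $\tau(z)=\tfrac{|z|^2}{1-|z|^2}=\sinh^2(d(0,z)/2)$ is strictly plurisubharmonic, its sublevel sets are exactly the Bergman balls about $0$, and $\omega=c\,i\partial\bar\partial\log(1+\tau)$. Restricting $\log(1+\tau)$ to $W$ and applying the Lelong--Jensen formula over the sublevel sets $W\cap\{\tau<s\}$ expresses $F_W$ through boundary integrals; since $i\partial\tau\wedge\bar\partial\tau$ squares to zero and $\log(1+\,\cdot\,)$ is concave increasing, the resulting terms carry a definite sign, which yields $\tfrac{d}{dt}\log\!\big(F_W(t)/G(t)\big)\ge 0$. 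Singularities of $W$ cause no trouble: one works on $W_{\mathrm{reg}}$ (or passes to a resolution) and checks the exceptional contributions vanish.

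I expect the monotonicity step to be the main obstacle. Ordinary Euclidean Lelong monotonicity only gives a lower bound of order $R^{2m}$, i.e.\ the linearisation of $\sinh^{2m}$; to recover the full hyperbolic growth one must use the precise potential $\log(1+\tau)$ and the constant-holomorphic-sectional-curvature geometry of the Bergman metric, not merely the pinching $-1\le\sec\le-\tfrac14$, and intuitively it is the fact that a positive-dimensional $W$ necessarily reaches the boundary of the ambient ball---where the Bergman volume form blows up like $(1-|z|^2)^{-(n+1)}$---that forces the hyperbolic rather than merely Euclidean growth. All of this is carried out in Hwang--To \cite[Theorem 1.1]{hwang2002volumes}, and for the present purposes I would simply quote it after performing the reduction above.
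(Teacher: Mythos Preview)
The paper does not prove this statement at all: it is quoted verbatim as \cite[Theorem 1.1]{hwang2002volumes} and used as a black box in the proof of Theorem \ref{Bound Hyperbolic Volume}. Your proposal is therefore already more than the paper offers; the reduction to a lift in $\mathbb{B}^n$ and the sketch of the Lelong--Jensen monotonicity argument are a faithful outline of Hwang--To's method, and your concluding sentence---that after the reduction one simply cites \cite{hwang2002volumes}---is precisely what the paper does.
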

Hwang and To generalized the above-mentioned theorem for a general Hermitian symmetric domain in \cite{hwang2000seshadri,hwang2002volumes}.

In the compact case, Theorem \ref{Hwanginequilty} gives the lower bound on the induced K\"ahler volume of subvarieties in terms of the injectivity radius of $X,$ however, in the case that $X$ is not compact, the injectivity radius of $X$ goes to zero as we get closer to the cusps. So we use the systole, the length of a shortest closed geodesic in $X,$ as a geometric invariant of $X$ to uniformly bound the volumes of all subvarieties of $X.$  For a compact ball quotient, the systole is twice the injectivity radius. However, for a non-compact $X$ the systole is still not zero and can be estimated by the trace of the hyperbolic elements in a representation of $\Gamma.$

 Theorem \ref{thick} tells us that every subvariety of $X$ has a point with injectivity radius as large as $\sy/2$. Hence, we will get the following theorem:

\begin{theorem} \label{Bound Hyperbolic Volume}
    Let $V$ be an $m$-dimensional subvariety of $\Xb$ which is not contained in $D.$ Then,
\begin{align} 
          \operatorname{vol}_{X}(V)
     &\ge
      \frac{ (4 \pi)^m}{m!}\sinh^{2m}\bi(\sy /2\bi).        
\end{align}    
\end{theorem}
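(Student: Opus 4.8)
The plan is to combine the thin--thick decomposition of \S\ref{thin-thick decomposition} with the local volume estimate of Hwang--To (Theorem \ref{Hwanginequilty}). Concretely, let $V\subset\Xb$ be irreducible of dimension $m>0$ with $V\not\subset D$. By Theorem \ref{thick} the subvariety $V$ must meet $X_{thick}$, so I may pick a point $x\in V\cap X_{thick}$. By the definition of $X_{thick}=X\setminus\bigcup_i U_{i,\rho}$ with $\rho=\sy/2$, the point $x$ lies outside every $\rho$-thin neighbourhood of a cusp; I must then promote this to a lower bound $\operatorname{inj}_x(X)\ge\sy/2$. The key observation is that any $\gamma\in\Gamma\setminus\Gamma_x$ realizing a small displacement $d(\tilde x,\gamma\tilde x)<\sy/2$ at $x$ cannot be hyperbolic (else $d(\tilde x,\gamma\tilde x)\ge\ell(\gamma)\ge\sy$ by the definition of the systole), hence must be parabolic by the classification of isometries and our unipotency assumption; but then $\gamma$ lies in some parabolic stabilizer $\Gamma_i$, which places $x$ in $U_{i,\rho}$, contradicting $x\in X_{thick}$. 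Therefore $r:=\operatorname{inj}_x(X)\ge\sy/2$.

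Next I would apply Theorem \ref{Hwanginequilty} at this point $x$ with this radius $r$: since $V$ passes through $x$,
\begin{align*}
\operatorname{vol}_X(V)\ \ge\ \operatorname{vol}_X\bi(V\cap B(x,r)\bi)\ \ge\ \frac{(4\pi)^m}{m!}\sinh^{2m}(r)\cdot\operatorname{mult}_x(V)\ \ge\ \frac{(4\pi)^m}{m!}\sinh^{2m}(r),
\end{align*}
where the first inequality is because $B(x,r)$ is an open subset of $X$, the second is Hwang--To, and the last uses $\operatorname{mult}_x(V)\ge 1$. Finally, since $\sinh$ is increasing on $[0,\infty)$ and $r\ge\sy/2>0$, we get $\sinh^{2m}(r)\ge\sinh^{2m}(\sy/2)$, yielding the claimed bound. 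One small point to address: $\operatorname{vol}_X(V)$ in the statement refers to the volume of $V$ with respect to the induced Bergman metric on the open part $V\cap X$, so restricting to $V\cap B(x,r)\subset V\cap X$ is legitimate and no boundary contribution interferes.

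The only genuine content is the reduction $x\in X_{thick}\Rightarrow\operatorname{inj}_x(X)\ge\sy/2$; everything else is a direct invocation of Theorem \ref{thick}, Theorem \ref{Hwanginequilty}, and monotonicity of $\sinh$. I expect that reduction to be the main (mild) obstacle, and it hinges precisely on the trichotomy elliptic/parabolic/hyperbolic for elements of $\operatorname{PU}(n,1)$ together with the hypothesis that parabolic stabilizers are unipotent (so that ``small displacement'' forces either a parabolic — hence cusp-stabilizing — element or an element of $\Gamma_x$ itself, which here is trivial as $\Gamma$ is torsion-free). Worth noting is that the argument is manifestly uniform in $V$: the radius $r\ge\sy/2$ depends only on $X$, not on $V$, which is exactly what makes the bound a uniform lower bound over all positive-dimensional subvarieties not contained in $D$.
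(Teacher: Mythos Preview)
Your proposal is correct and follows essentially the same route as the paper: invoke Theorem \ref{thick} to find a point $x\in V\cap X_{thick}$, argue (exactly as in the proof of Proposition \ref{radofThick}, which the paper cites here) that such a point has $\operatorname{inj}_x(X)\ge\sy/2$, and then apply the Hwang--To inequality (Theorem \ref{Hwanginequilty}) together with $\operatorname{mult}_x(V)\ge 1$ and the monotonicity of $\sinh$. The only cosmetic difference is that you unpack the trichotomy argument in-line, whereas the paper simply refers back to Proposition \ref{radofThick}.
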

\begin{proof}
Theorem \ref{thick} implies that $V\cap X_{\mathrm{thick}}\neq \varnothing.$
This means there always exists a point $x \in V$ such that $\inj \ge \sy /2.$ Now, Theorem \ref{Hwanginequilty} gives:
    $$
     \operatorname{vol}_{X}(V)
     \ge
      \frac{ (4\pi)^m}{m!}\sinh^{2m}\bi(\sy /2\bi).
    $$
\end{proof}

\begin{Corollary}\label{Deg of log} With the same notation as Theorem \ref{Bound Hyperbolic Volume}, we have that
     \begin{align*}\nonumber
     (K_{\Xb}+D) ^{m} \cdot V
     &\ge
         (n+1)^m\sinh^{2m}\bi(\sy /2\bi).
    \end{align*}
\end{Corollary}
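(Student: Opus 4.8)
The plan is to deduce Corollary \ref{Deg of log} directly from Theorem \ref{Bound Hyperbolic Volume} together with the identity \eqref{ChernForm} relating the K\"ahler form $\omega_X$ to the first Chern class of $K_{\Xb}+D$. Recall from \eqref{ChernForm} that, as currents on $\Xb$,
\begin{align*}
\operatorname{c}_1(K_{\Xb}+D) = \frac{1}{2\pi}\cdot\frac{n+1}{2}\,[\omega_X].
\end{align*}
Since $V$ is not contained in $D$, the intersection number $(K_{\Xb}+D)^m\cdot V$ can be computed by integrating the $m$-th power of the Chern form against $V$, i.e.\ against the smooth locus $V\cap X$ of $V$ (the boundary $D\cap V$ has measure zero for this integration, and the good-metric property of the Bergman metric on $\Xb$ justifies that the current-theoretic intersection equals this integral).

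First I would write
\begin{align*}
(K_{\Xb}+D)^m\cdot V = \int_V \operatorname{c}_1(K_{\Xb}+D)^m = \Big(\frac{n+1}{4\pi}\Big)^m \int_{V\cap X} \omega_X^m.
\end{align*}
Then I would recall that the induced K\"ahler volume $\operatorname{vol}_X(V)$ is by definition $\frac{1}{m!}\int_{V\cap X}\omega_X^m$ (this is the standard normalization under which $\operatorname{vol}_X$ appears in Theorem \ref{Hwanginequilty} and Theorem \ref{Bound Hyperbolic Volume}). Hence
\begin{align*}
(K_{\Xb}+D)^m\cdot V = \Big(\frac{n+1}{4\pi}\Big)^m m!\cdot \operatorname{vol}_X(V).
\end{align*}
Plugging in the lower bound $\operatorname{vol}_X(V)\ge \frac{(4\pi)^m}{m!}\sinh^{2m}(\sy/2)$ from Theorem \ref{Bound Hyperbolic Volume}, the factors of $(4\pi)^m$ and $m!$ cancel and one is left with $(K_{\Xb}+D)^m\cdot V \ge (n+1)^m \sinh^{2m}(\sy/2)$, which is exactly the claim.

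The only genuine subtlety — and the step I would be most careful about — is the passage from the topological/algebraic intersection number $(K_{\Xb}+D)^m\cdot V$ to the convergent integral $\int_{V\cap X}\operatorname{c}_1(K_{\Xb}+D)^m$ over the open part. This is where the hypothesis that the Bergman metric extends to $\Xb$ as a \emph{good} singular Hermitian metric in the sense of Mumford (invoked in the Toroidal Compactification subsection via \cite{mumford1977hirzebruch}) does the work: goodness is precisely what guarantees that the associated Chern current represents $\operatorname{c}_1(K_{\Xb}+D)$ in $H^{1,1}(\Xb,\R)$ and that its top self-intersection may be evaluated as an honest (absolutely convergent) integral on the complement of $D$, ignoring the measure-zero boundary. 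Everything else is bookkeeping with the constants in \eqref{ChernForm} and the normalization of $\operatorname{vol}_X$.
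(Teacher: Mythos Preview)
Your proposal is correct and follows the same approach as the paper: use \eqref{ChernForm} to convert $(K_{\Xb}+D)^m\cdot V$ into $\bigl(\tfrac{n+1}{4\pi}\bigr)^m m!\,\operatorname{vol}_X(V)$ and then apply Theorem~\ref{Bound Hyperbolic Volume}. The paper's proof is terser, but your added remark about Mumford's good-metric property justifying the passage from the intersection number to the integral over $V\cap X$ is exactly the point tacitly used there.
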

\begin{proof} Theorem \ref{Bound Hyperbolic Volume} together with \eqref{ChernForm} gives:
    \begin{align*}
     (K_{\Xb}+D) ^{m} \cdot V
     &= 
     \bi(\frac{n+1}{4\pi }\bi)^m m!
     \operatorname{vol}_{X}(V) \mbox{\ (by \eqref{ChernForm})}   \\ \nonumber
     &\ge
     (n+1)^m\sinh^{2m}\bi(\sy /2\bi) \mbox{\ (by Theorem \ref{Bound Hyperbolic Volume})} 
    \end{align*}
\end{proof}
We recall Bakker and Tsimerman's theorem which tells us that 
the uniform depth of cusps of $X$ bounds the intersection numbers of $K_{\Xb}$ with subvarieties of $\Xb$ which is not contained in $D.$
\begin{theorem}(\cite[Corollary 3.8]{bakker2018kodaira})\label{Tsi} Suppose $d$ is the uniform depth of cusps. Then,
    $$K_{\Xb}+(1-\lambda) D$$
is ample for $\lambda \in (0,d(n+1)/4\pi)$.
\end{theorem}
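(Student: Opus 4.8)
The plan is to deduce Theorem~\ref{Tsi} essentially as a restatement of the cited result \cite[Corollary 3.8]{bakker2018kodaira}, after matching normalizations. First I would recall the setup of Bakker--Tsimerman: on the toroidal compactification $\Xb$ one has the boundary divisor $D$, the conormal bundle $\mathcal{O}_D(-D)$ is ample, and the Bergman K\"ahler form $\omega_X$ extends as a good singular metric whose current satisfies $\operatorname{c}_1(K_{\Xb}+D)=\frac{1}{2\pi}\frac{n+1}{2}[\omega_X]$, as recorded in \eqref{ChernForm}. The key quantitative input is that if the uniform depth of cusps is $d$ in the sense of Definition~\ref{UniDep}, then each cusp cross-section admits a horoball of the right size injecting into $X$, and the self-intersection of $D$ along the abelian boundary components is controlled from below by $d$; this is exactly what makes $K_{\Xb}+(1-\lambda)D = (K_{\Xb}+D) - \lambda D$ positive after subtracting a sufficiently small multiple of $D$.

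The main steps, in order, would be: (i) reduce to checking positivity of the class $(K_{\Xb}+D)-\lambda D$ against all subvarieties via the Nakai--Moishezon criterion (or invoke that Bakker--Tsimerman already did this); (ii) on a subvariety $V\not\subset D$ of dimension $m$, expand $\bigl((K_{\Xb}+D)-\lambda D\bigr)^m\cdot V$ and use that the mixed terms involving $D$ are governed by $\operatorname{c}_1(\mathcal{O}_D(-D))$ restricted to the abelian varieties, whose positivity combined with the depth bound gives the numerical inequality; (iii) on a subvariety $V\subset D$, use directly that $\bigl(K_{\Xb}+(1-\lambda)D\bigr)|_D = K_D + \lambda\,\mathcal{O}_D(-D)$ (by adjunction $K_{\Xb}|_D = K_D + \mathcal{O}_D(-D)$ and $D|_D=\mathcal{O}_D(D)$, so $(K_{\Xb}+D)|_D=K_D$ and hence $(K_{\Xb}+(1-\lambda)D)|_D = K_D+\lambda\mathcal{O}_D(-D) = \lambda\,\mathcal{O}_D(-D)$ since $D$ is a disjoint union of abelian varieties with trivial canonical bundle), which is ample for $\lambda>0$ because $\mathcal{O}_D(-D)$ is ample; (iv) assemble these into the statement that the class is ample for $\lambda$ in the stated open interval, where the precise endpoint $d(n+1)/4\pi$ comes from tracking the constant $\frac{1}{2\pi}\frac{n+1}{2}$ in \eqref{ChernForm} against the depth normalization in Definition~\ref{UniDep}.

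Since this is a verbatim citation of \cite[Corollary 3.8]{bakker2018kodaira}, the honest "proof" is really just a pointer plus the remark that their hypotheses (the parabolic stabilizers being unipotent, so that $D$ is a smooth disjoint union of abelian varieties with ample conormal bundle) are exactly our standing assumptions on $\Gamma$, and that our normalization of "uniform depth" in Definition~\ref{UniDep} agrees with theirs (Definition~3.7 there), which we have imported. So I would write the proof of Theorem~\ref{Tsi} as: \emph{This is \cite[Corollary 3.8]{bakker2018kodaira}; note that the hypotheses on $\Gamma$ guarantee the toroidal compactification $\Xb$ is smooth with $D$ a disjoint union of abelian varieties of ample conormal bundle, and our Definition~\ref{UniDep} of uniform depth matches \cite[Definition 3.7]{bakker2018kodaira}.}

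The hard part is not the proof at all but the bookkeeping of constants: one must be careful that the factor $\frac{n+1}{4\pi}$ appearing in the interval $(0, d(n+1)/4\pi)$ is consistent with the Chern-class normalization \eqref{ChernForm} and with the depth-scaling convention (horoball $B_i(t_i/d)$ rather than $B_i(t_i d)$), since an off-by-$2\pi$ or off-by-$2$ error here would propagate into all the later effective bounds (Theorems~\ref{LargeIntersection Int}, Corollaries~\ref{sparsity int}, \ref{Very amplness Int}). I would double-check this by specializing to $m=1$ and comparing $\bigl(K_{\Xb}+(1-\lambda)D\bigr)\cdot C$ for a curve $C\not\subset D$ against the volume bound of Theorem~\ref{Bound Hyperbolic Volume} combined with Corollary~\ref{Deg of log}.
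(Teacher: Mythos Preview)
Your proposal is correct and matches the paper exactly: Theorem~\ref{Tsi} is stated in the paper as a direct citation of \cite[Corollary 3.8]{bakker2018kodaira} with no accompanying proof, so your suggestion to simply point to that reference and note that the standing hypotheses on $\Gamma$ and the definition of uniform depth (Definition~\ref{UniDep}) agree with those in \cite{bakker2018kodaira} is precisely what is needed. Your additional sketch of the Nakai--Moishezon argument and the constant-tracking are helpful sanity checks but go beyond what the paper itself provides.
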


\begin{remark}
    There is a typo in the statement of this corollary in the paper by Bakker-Tsimerman. The correct upper bound should be $d(n+1)/4\pi,$ as is clear from their proof and has been confirmed by the authors.
\end{remark}
Consider the decomposition of the boundary divisor $D$ to the connected components $D=\cup_{m=1}^{k}D_m.$ Due to \cite{MOK}, we know that each $D_{m}$ is an abelian variety with ample conormal bundle $O_{D_m}(-D_m).$

\begin{Lemma}\label{two component positivity}
If $d>4\pi$, then for $i\neq j$ the line bundle
$L_{i,j}:=K_{\Xb}-D_i-D_j$ is big and nef.
\end{Lemma}

\begin{proof}
since $d>4\pi,$ by Theorem \ref{Tsi} the divisor $K_{\Xb}-\sum_{m=1}^k D_m$ is ample. Writing
\[
L_{i,j}=\Big(K_{\Xb}-\sum_{m=1}^k D_m\Big)+\sum_{m\ne i,j}D_m,
\]
we see that $L_{i,j}$ is the sum of an ample divisor and effective divisors, hence $L_{i,j}$ is big.

For nefness let $C\subset\Xb$ be any irreducible curve.  We consider two separate cases:
\begin{enumerate}
\item The curve $C$ is contained in $D_r$ for some $r\neq i,j:$ 

Since the boundary components are pairwise disjoint, for $r \neq i,j$ the restriction $D_m|_{D_r}$ is trivial for all $m \neq r$. Adjunction gives $(K_{\Xb}+D_r)|_{D_r}\cong K_{D_r}.$
We know that each $D_r$ is an abelian variety, so $K_{D_r}\cong\mathcal O_{D_r}$. This yields that $(K_{\Xb})_{|D_r}\cong\mathcal O_{D_r}(-D_r).$
Therefore, for $r\neq i,j$,
\[
L_{i,j}|_{D_r}=(K_{\Xb}-D_i-D_j)_{|D_r}\cong (K_{\Xb})_{|D_r}\cong\mathcal O_{D_r}(-D_r).
\]
Since the conormal bundle $\mathcal O_{D_r}(-D_r)$ is ample on $D_r$, its degree on any curve $C\subset D_r$ is positive. Thus $L_{i,j}\cdot C>0$ in this case.

\medskip\item The curve $C$ is not contained in any boundary component $D_m$ except for $m=i$ or $m=j:$
Then for every $m\neq i,j,$ we know that $D_m\cdot C\ge0$ (intersection with an effective divisor is non-negative when the curve is not contained in that divisor), and because $K_{\Xb}-D$ is ample we have \((K_{\Xb}-D)\cdot C>0\). Hence
\[
L_{i,j}\cdot C = (K_{\Xb}-D)\cdot C + \sum_{m\ne i,j} D_m\cdot C > 0.
\]
\end{enumerate}
In both cases $L_{i,j}\cdot C>0$ for every irreducible curve $C$, so $L_{i,j}$ is nef.
\end{proof}

Now, putting together Theorem \ref{Bound Hyperbolic Volume}, Theorem \ref{Tsi} and what we proved for the uniform depth of cusps, Theorem \ref{systolDepth}, yields a lower bound for the degree of $K_{\Xb}$ on $V$ in terms of $\sy:$ 
\begin{theorem} \label{LargeIntersection} Let $V$ be an $m$-dimensional subvariety of $\Xb$ which is not fully contained in $D.$ If $\sy  \ge 4\ln\bi(5n+(4\pi)^4\bi),$ then
\begin{align} \label{deg}
    K_{\Xb}^m \cdot V 
    &>
    (\frac{n}{4\pi})^m \cdot e^{m\sy/16 }.
\end{align}    
\end{theorem}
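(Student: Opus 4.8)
The plan is to combine the three ingredients just assembled: the volume lower bound from Theorem~\ref{Bound Hyperbolic Volume}, the ampleness statement of Bakker--Tsimerman (Theorem~\ref{Tsi}), and the lower bound on the uniform depth of cusps from Theorem~\ref{systolDepth} (together with its Corollary, which under the hypothesis $\sy \ge 4\ln(5n+(4\pi)^4)$ gives $d > e^{\sy/16} > 4\pi$). First I would fix an $m$-dimensional subvariety $V \subset \Xb$ not contained in $D$ and pick $\lambda \in (0, d(n+1)/4\pi)$; then $K_{\Xb} + (1-\lambda)D$ is ample, so in particular nef. Writing $K_{\Xb} = (K_{\Xb}+(1-\lambda)D) + (\lambda-1)D$ — or better, writing $K_{\Xb}+D = (K_{\Xb}+(1-\lambda)D) + \lambda D$ — I want to compare $K_{\Xb}^m \cdot V$ with $(K_{\Xb}+D)^m \cdot V$, which Corollary~\ref{Deg of log} already bounds below by $(n+1)^m \sinh^{2m}(\sy/2)$.

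The key step is the positivity bookkeeping. Since $K_{\Xb}+(1-\lambda)D$ is nef and $D$ is effective with $K_{\Xb}$ itself ample (by the Corollary after Theorem~\ref{systolDepth}, $d > 4\pi$ forces $\sy$ large enough that one may take $\lambda$ close to $1$ and still have $1-\lambda > $ something negative... ), I expect the cleanest route is: choose $\lambda$ slightly below $d(n+1)/4\pi$; since $d > 4\pi$ we have $d(n+1)/4\pi > n+1 > 1$, so we may legitimately take $\lambda = 1$, making $K_{\Xb}$ itself nef, and even take $\lambda$ a bit larger than $1$, so that $K_{\Xb} - (\lambda-1)D = K_{\Xb}+(1-\lambda)D$ is ample with $\lambda - 1 > 0$. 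Then expand
\begin{align*}
(K_{\Xb}+D)^m \cdot V = \bigl( (K_{\Xb}+(1-\lambda)D) + \lambda D \bigr)^m \cdot V,
\end{align*}
and bound each mixed term $\binom{m}{j}(K_{\Xb}+(1-\lambda)D)^{m-j} \cdot (\lambda D)^j \cdot V$ using that $D|_D$ has ample conormal bundle — i.e.\ $-D|_D$ is ample — so the self-intersection terms involving $D$ have a controlled sign, while $V$ meets $X$ so $D^m \cdot V$ is not the whole story. Concretely I would aim to show $K_{\Xb}^m \cdot V \ge \bigl(\tfrac{d(n+1)/4\pi - 1}{d(n+1)/4\pi}\bigr)^m (K_{\Xb}+D)^m\cdot V$ or a similar ratio, using nefness of $K_{\Xb}+(1-\lambda)D$ and effectivity of $D$ to drop the error terms; this is exactly the kind of interpolation used in the proof of Theorem~\ref{Tsi}.

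Putting it together: with $d > 4\pi$ from the Corollary, the ratio factor $1 - \tfrac{4\pi}{d(n+1)}$ (or its $m$-th power) is bounded below by a concrete constant, and combining with Corollary~\ref{Deg of log},
\begin{align*}
K_{\Xb}^m \cdot V \ge \Bigl(1 - \tfrac{4\pi}{d(n+1)}\Bigr)^m (n+1)^m \sinh^{2m}(\sy/2).
\end{align*}
Since $\sinh(\sy/2) > \tfrac{1}{2}e^{\sy/2 - ?}$ — more precisely $\sinh(t) > \tfrac{1}{2}e^{t} \cdot(1 - e^{-2t}) \ge \tfrac{1}{4}e^{t}$ for $t$ bounded away from $0$, and our $\sy/2$ is large — we get $\sinh^{2m}(\sy/2)$ comfortably exceeding $e^{m\sy/16}$ times a power of $(1/4\pi)$-type constants, while $d > e^{\sy/16}$ makes $1 - 4\pi/(d(n+1))$ very close to $1$. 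A careful chase of the numerical constants, comparing $(n+1)^m \sinh^{2m}(\sy/2) \cdot (\text{ratio})^m$ against $(n/4\pi)^m e^{m\sy/16}$, yields the strict inequality \eqref{deg}; the precise threshold $\sy \ge 4\ln(5n+(4\pi)^4)$ is exactly what is needed to absorb the $(4\pi)^4$ and $5n$ losses into the exponential.

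The main obstacle I anticipate is the positivity argument controlling $K_{\Xb}^m\cdot V$ by $(K_{\Xb}+D)^m \cdot V$: one must be careful that $V$ may lie partly in a neighbourhood of $D$, and that the mixed intersection numbers with $D$ need not all have favourable sign, so the clean ratio estimate must be extracted from nefness of the family $K_{\Xb}+(1-\lambda)D$ for a whole interval of $\lambda$ rather than a single value. Once that ratio is in hand, the remaining work is the (routine but delicate) comparison of $\sinh$ with the exponential and tracking the explicit constants to match the stated hypothesis on $\sy$.
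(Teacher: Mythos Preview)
Your route differs from the paper's and is in fact viable, but the key step you flag as an obstacle is left unproved, and the expansion you sketch is not the way to close it. The paper does \emph{not} try to compare $K_{\Xb}^m\cdot V$ to $(K_{\Xb}+D)^m\cdot V$ uniformly. It splits into two cases. If $V\cap D=\varnothing$ then $K_{\Xb}^m\cdot V=(K_{\Xb}+D)^m\cdot V$ and Corollary~\ref{Deg of log} finishes at once. If $V\cap D\neq\varnothing$ the paper abandons Corollary~\ref{Deg of log} entirely: using the adjunction $K_{\Xb|D}\cong -D_{|D}$ it checks that each mixed term $K_{\Xb}^{m-i}\cdot(-D)^i\cdot V$ equals $-(-D_{|D})^{m-1}\cdot V_{|D}<0$ for $i\ge 1$, so expanding the nef condition $\bigl(K_{\Xb}-(\Lambda-1)D\bigr)^m\cdot V\ge 0$ (with $\Lambda=(n+1)d/4\pi$) and dropping the negative intermediate terms yields $K_{\Xb}^m\cdot V\ge(\Lambda-1)^m(-D_{|D})^{m-1}\cdot V_{|D}\ge(\Lambda-1)^m$. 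The depth bound $d>e^{\sy/16}$ from the Corollary to Theorem~\ref{systolDepth} then gives \eqref{deg}.

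Your proposed ratio $K_{\Xb}^m\cdot V\ge\bigl(1-\tfrac{4\pi}{d(n+1)}\bigr)^m(K_{\Xb}+D)^m\cdot V$ is true, but not via the expansion of $\bigl((K_{\Xb}+(1-\lambda)D)+\lambda D\bigr)^m$ you outline --- there the mixed terms involving powers of $D$ alone genuinely alternate in sign, as you suspected. The clean way is to write $K_{\Xb}=\tfrac{\Lambda-1}{\Lambda}(K_{\Xb}+D)+\tfrac{1}{\Lambda}\bigl(K_{\Xb}-(\Lambda-1)D\bigr)$ as a convex combination of two \emph{nef} classes; every mixed intersection $(K_{\Xb}+D)^{m-i}\bigl(K_{\Xb}-(\Lambda-1)D\bigr)^i\cdot V$ is then non-negative, and keeping only the $i=0$ term gives exactly your ratio. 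Combined with Corollary~\ref{Deg of log} this yields $K_{\Xb}^m\cdot V\ge n^m\sinh^{2m}(\sy/2)$ uniformly, which is much stronger than the paper's Case~2 bound and comfortably implies \eqref{deg}. So your approach, once the convex-combination step is supplied, not only works but sharpens the estimate; the paper's case-split avoids this trick at the cost of the weaker bound governed by the depth rather than the volume.
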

\begin{proof}

We deal separately with the following two cases:
\begin{enumerate}
    \item $V\cap D= \varnothing:$ In this case we have $K_{\Xb} ^{m} \cdot V
     =
     (K_{\Xb}+D) ^{m} \cdot V$  
 and from Corollary \ref{Deg of log} we get that 
    \begin{align} \nonumber
    K_{\Xb} ^{m} \cdot V
      & \ge \label{Vol subvar out D}
      (n+1)^m\sinh^{2m}\bi(\sy /2\bi)\\
      & >
      (\frac{n}{4\pi})^m e^{m\sy} \mbox{\ \ (by the bound on the systole).}
    \end{align}
    
    \item $V\cap D\neq \varnothing:$ Since $\sy  \ge 4\ln\bi(5n+(4\pi)^4\bi)),$ Corollary  \ref{depth Bounds 2} gives that the uniform depth of cusps is at least $4\pi$ and it follows from Bakker-Tsimerman's theorem (Theorem \ref{Tsi})  that $K_{\Xb}$ is ample. In particular, this implies that $K_{\Xb|D}$ is ample. On the other hand, we know that the conormal bundle $-D_{|D}$ is ample. Therefore, for every $i>1,$ we have
    \begin{align} \label{Negative middle term}
        K_{\Xb}^{m-i}\cdot (-D)^i \cdot V
        =
        - (K_{\Xb|D})^{m-i} \cdot (-D_{|D})^{i-1} \cdot V_{|D} <0. 
    \end{align}

 By Bakker-Tsimerman's  theorem, Theorem \ref{Tsi}, we get that  
$$\Big(K_{\Xb}-\bi((n+1)d/4\pi-1\bi)D\Big)^m \cdot V \ge 0.$$
Expanding this and combining with \eqref{Negative middle term} gives:
\begin{align} \nonumber
   K_{\Xb}^m \cdot V
   &\ge 
   \bi((n+1)d/4\pi-1\bi)^m \cdot -(-D)^m \cdot V \\ \nonumber
   &\ge
   \bi((n+1)d/4\pi-1\bi)^m  \mbox{(by ampleness of $-D_{|D}$)}\\ \nonumber
 \nonumber
&>
\Big(nd/4\pi
 \Big)^m  \mbox{ (because $d> 4\pi$)} \\
&\ge \label{Vol subvar in D}
\bi(n/4\pi\bi)^m
 \cdot
e^{m\sy/16 } \mbox{ (by Corollary  \ref{depth Bounds 2})}.
\end{align}

Combining \eqref{Vol subvar out D} and \eqref{Vol subvar in D} gives that for all $V$ not contained in $D$ we have:
\begin{align*}
    K_{\Xb}^m \cdot V > (\frac{n}{4\pi})^m \cdot e^{m\sy/16 }.   
\end{align*}
\end{enumerate}  
\end{proof}
 The volume of a line bundle $L$ on an $m$-dimensional projective variety $V$ is defined as the non-negative real number 
$$\operatorname{vol}_{V}(L):=\limsup\limits_{b\rightarrow \infty}\dfrac{h^0(V, bL)}{b^m/m!},$$
which measures the positivity of $L$ from the point of view of birational geometry. If $L$ is a nef line bundle on $V$, then $\operatorname{vol}_{V}(L)= L^{ n}.$ Let $V'$ be a smooth variety birational to $V$ with a canonical bundle $K_{V'}.$ The canonical volume of the subvariety $V$ is

$$\widetilde{\operatorname{vol}}_{V}:=\limsup\limits_{b\rightarrow \infty}\dfrac{h^0(V', bK_{V'})}{b^m/m!},$$
which does not depend on the choice of $V'.$

To prove the bound on the canonical volume for a subvariety which does not intersect the boundary $D,$ we will use the following lemma inspired by \cite[Proposition 3.2]{brunebarbe2020increasing}.

We refer the reader to \cite{goldberg1967holomorphic} for basics on different notions of curvatures and here we will use the facts that the holomorphic sectional curvature of the Bergman metric is $-1$ and the holomorphic bisectional curvature of this metric is bounded above by $-\frac{1}{2}.$
\begin{Lemma} \label{Richi} Let $V$ be an $m-$dimensional subvariety of $\Xb$ which does not intersect with $D.$ Let $\omega$ be the K\"ahler form induced on $V$ from the Bergman metric. Then on $V$ the following inequality holds
\begin{align*}
    \operatorname{Ricci}_\omega \le -\frac{m+1}{2}\omega,
\end{align*}
where $\operatorname{Ricci}_\omega$ is the Ricci curvature of $\omega. $
    
\end{Lemma}
\begin{proof} Since both sides of the inequality are bilinear, it is enough to check the inequality only for unit vectors. Let $x\in V$ and $v\in T_{x}V$ be a unit vector, i.e., $\omega(v,v)=1.$ Take an orthonormal basis
$(e_1,e_2 \dots , e_m)$ of $T_{x}V$ such that $e_1 = v.$ We will denote the holomorphic bisectional curvature of the Bergman metric at $u_1,u_2\in T_{x}V$ by $H(u_1,u_2).$ Since the holomorphic bisectional curvature and holomorphic sectional curvature only decrease on subvarieties, we have that $H(e_i,v)\le -\frac{1}{2}$ for $i\in\{2,...,m\}$ and $H(e_1,e_1)\le -1$. Now we can write:
\[\operatorname{Ricci}_\omega(v,v)= \sum_{i=1}^{n}H(e_i,v)=H(e_1,e_1) +\sum_{i=2}^{n}H(e_i,v)\le -\frac{m+1}{2}.\]
    
\end{proof}

\begin{theorem} \label{algebriac Volume} Let $V$ be an $m$-dimensional subvariety of $\Xb$ which is not contained in $D.$ If $\operatorname{sys(X)}\ge 4\ln\bi(5n+(8\pi)^4\bi),$ then
\begin{align} 
  \widetilde{\operatorname{vol}}_{V}
    >
     (\frac{m}{4\pi})^m e^{m\sy/16}.
\end{align}    
\end{theorem}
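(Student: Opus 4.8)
The plan is to reduce the statement about the canonical volume of $V$ to the already-established volume bound for subvarieties, using the curvature inequality of Lemma \ref{Richi} to compare the Bergman volume form on $V$ with a pluricanonical form on a resolution. First I would pass to a smooth model: let $\nu\colon V'\to V$ be a resolution of singularities of $V$, and pull back the restriction of the Bergman Kähler form $\omega_X$ to $V'$ via $\nu$. Away from the exceptional locus and the boundary $D$, $V'$ carries the induced Bergman metric, whose holomorphic bisectional curvature is non-positive and whose holomorphic sectional curvature is bounded above by a negative constant — here one must be careful with normalizations, since the ambient holomorphic sectional curvature is $-1$ but the induced metric on a subvariety can only have its holomorphic sectional curvature decrease (by Gauss's equation / the curvature-decreasing property of submanifolds), so the constant $c$ in Lemma \ref{Richi} can be taken to be $c=1$ after the appropriate scaling that makes the holomorphic sectional curvature $\le -1$.

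The key analytic step is then: Lemma \ref{Richi} gives $\operatorname{Ric}_{\omega}\ge \omega$ on the smooth locus, equivalently the adjoint current $K_{V'}$ dominates (a multiple of) the class of $\nu^*\omega_X$; integrating $m$-th powers over $V'$ yields $\widetilde{\operatorname{vol}}_V \ge \int_{V'}\bigl(\tfrac{1}{?}\operatorname{Ric}_\omega\bigr)^m$ up to the normalization constants relating $\operatorname{Ric}_\omega$ to $c_1(K_{V'})$ (a factor of $\tfrac{1}{2\pi}$) and relating the normalized Bergman form to $\operatorname{vol}_X$. This is exactly the inequality the author proved in \cite{memarian2022positivity} that was advertised in the strategy section; I would invoke it in the form $\widetilde{\operatorname{vol}}_V \ge \tfrac{m!}{(4\pi)^m}\operatorname{vol}_X(V)$, or a constant of that shape. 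Granting that, combine with Theorem \ref{Bound Hyperbolic Volume}:
\begin{align*}
\widetilde{\operatorname{vol}}_V \ \ge\ \frac{m!}{(4\pi)^m}\operatorname{vol}_X(V)\ \ge\ \frac{m!}{(4\pi)^m}\cdot\frac{(4\pi)^m}{m!}\sinh^{2m}\bigl(\sy/2\bigr)\ =\ \sinh^{2m}\bigl(\sy/2\bigr).
\end{align*}
It then remains to check the elementary numerical inequality $\sinh^{2m}(\sy/2) > (m/4\pi)^m e^{m\sy/16}$ under the hypothesis $\sy\ge 4\ln(5n+(8\pi)^4)$. Since $\sinh(t)\ge \tfrac12 e^{t}$ for $t\ge 0$ (in fact $\sinh(t)> \tfrac12 e^{t}(1-e^{-2t})$), we have $\sinh^{2m}(\sy/2) > (\tfrac12)^{2m}e^{m\sy}(1-e^{-\sy})^m$, and the hypothesis forces $\sy$ large enough that $e^{m\sy}(1-e^{-\sy})^m \cdot 4^{-m} > (m/4\pi)^m e^{m\sy/16}$, i.e. $e^{15m\sy/16} > (4m/4\pi)^m(1-e^{-\sy})^{-m} = (m/\pi)^m(1-e^{-\sy})^{-m}$, which holds once $\sy \gtrsim \tfrac{16}{15}\ln(2m/\pi)$; this is dominated by the stated lower bound on $\sy$ (note $m\le n$). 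I would spell out these estimates to confirm the strict inequality.

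\textbf{Main obstacle.} The delicate point is the passage from the interior curvature estimate on the smooth locus of $V'$ to a global inequality $\widetilde{\operatorname{vol}}_V \ge \text{const}\cdot\operatorname{vol}_X(V)$ on the compact model: one must control the behavior of the induced metric near the exceptional divisor of the resolution and, more seriously, near $D\cap \overline{V}$, where the Bergman metric degenerates (the potential $-\log u$ blows up). This is precisely where the cusp analysis enters — one needs that the induced metric extends as a metric with at worst log-type (good, in Mumford's sense) singularities on $V'$ so that $\int_{V'}\operatorname{Ric}_\omega^m$ computes $c_1(K_{V'})^m = \widetilde{\operatorname{vol}}_V$ without a loss, or alternatively one uses only the interior integral $\int_{V\cap X}\omega_X^m = \operatorname{vol}_X(V)$ (which Theorem \ref{Bound Hyperbolic Volume} already lower-bounds) together with a comparison of currents on $V'$ showing no mass escapes to the boundary. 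I expect the cleanest route is to quote \cite{memarian2022positivity} directly for this comparison (as the strategy section indicates), so that the remaining work in this proof is genuinely just the curvature input of Lemma \ref{Richi}, the chain of normalization constants, and the final elementary estimate on $\sinh$.
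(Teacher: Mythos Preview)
For $V\cap D=\varnothing$ your plan coincides with the paper's: both apply Lemma~\ref{Richi} to the induced Bergman metric on a resolution $V'$ to see that $K_{V'}$ dominates $K_{\Xb|V'}$ as a nef class, then feed in the ambient degree bound (Theorem~\ref{LargeIntersection}) and finish numerically.

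The gap is in the case $V\cap D\neq\varnothing$. You correctly flag the boundary degeneration as the obstacle, but the fix you propose --- quoting \cite{memarian2022positivity} for an inequality of the shape $\widetilde{\operatorname{vol}}_V\ge \tfrac{m!}{(4\pi)^m}\operatorname{vol}_X(V)$ --- is not what that reference supplies, and no such global comparison is established anywhere in the paper. What \cite{memarian2022positivity} actually provides is an \emph{ampleness} statement: the twisted log-cotangent bundle $\Omega^1_{V'}(\log D')\langle -rD'\rangle$ is ample for every $r\in(0,d/2\pi)$, whence (taking determinants) $K_{V'}+(1-mr)D'$ is ample, and in particular $K_{V'}$ itself is ample once $d>2\pi$. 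The paper's proof in this case never touches $\operatorname{vol}_X(V)$ at all; instead it picks $r'$ close to $d/2\pi$, expands $\bigl(K_{V'}+(1-mr')D'\bigr)^m\ge 0$, controls the signs of the cross-terms $K_{V'}^{m-i}\cdot(-D')^i$ via ampleness of $K_{V'}$ and of $-D'_{|D'}$, and extracts $K_{V'}^m\ge (md/4\pi)^m$, which is then converted to the systole via~\eqref{depth Bounds 2}. Your curvature route cannot shortcut this step: near $D'$ the Ricci current of the degenerate induced metric represents at best the \emph{log} canonical class $K_{V'}+D'$ (just as the ambient Bergman metric represents $K_{\Xb}+D$ rather than $K_{\Xb}$, cf.~\eqref{ChernForm}), not $K_{V'}$ itself, and bridging that discrepancy is exactly what the twisted-ampleness input from \cite{memarian2022positivity} is spent on.
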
    
\begin{proof} Let $\mu: V'\longrightarrow V$ be a desingularization such that the set-theoretic preimage of the boundary divisor $D'$ is a normal crossing divisor. We consider two cases:
\begin{enumerate}
    \item $V \cap D =\varnothing:$ Since on $V$ we have that $\operatorname{Ricci}_\omega=-2\pi i \cdot c_1(K_V),$ we can apply Lemma \ref{Richi} together with \eqref{ChernForm} to get that $K_{V'}-\frac{m+1}{n+1}\mu^*(K_{\Xb})$ is nef and in particular pseudo-effective on $V'.$ Because the 
 volume does not decrease in a pseudo-effective direction, we get:
   \begin{align}\label{cano vol ineq 1}
    \widetilde{\operatorname{vol}}_{V}
    =
    \operatorname{vol}_{V'}(K_{V'}) 
    \ge
    (\frac{m+1}{n+1})^m\operatorname{vol}_{V'}(\mu^{*}(K_{\Xb})). 
     \end{align}
    Since $\sy \ge 4\ln\!\big(5n+(4\pi)^4\big)$, Corollary~\ref{depth Bounds 2} implies that the uniform depth of cusps is at least $4\pi$. Therefore, by Theorem~\ref{Tsi}, $K_{\Xb}$ is ample, and hence $\mu^*(K_{\Xb})$ is big and nef. Now we can write: 
\begin{align*}
\operatorname{vol}_{V'}(\mu^*(K_{\Xb}))
= (\mu^*(K_{\Xb}))^m 
&=K_{\Xb}^m\cdot V\\
&\ge (n+1)^m\sinh^{2m}\bi(\sy /2\bi)
\mbox{ \ (by \eqref{Vol subvar out D})}.\end{align*}

We can conclude the desired inequality for this case by combining this inequality with \eqref{cano vol ineq 1}. 

\item  $V \cap D \neq \varnothing:$ Since $\sy  \ge 4\ln\bi(5n+(8\pi)^4\bi)),$ Corollary \ref{depth Bounds 2} gives that the uniform depth of cusps is at least $8\pi.$ By \cite[Theorem A]{memarian2022positivity} we get that twisted log-cotangent bundle $\Omega_{V'}^1(\operatorname{log}(D))\la -rD' \ra$ is big and nef for every $r\in(0,d/4\pi).$ Taking the determinant gives that $K_{V'} + (1 - mr)D'$ is big and nef for every $r \in (0, d/4\pi)$.  
Since $d > 4\pi$, we can plug in $r = 1/m$ and deduce that $K_{V'}$ is big and nef. Hence,
\[
\widetilde{\operatorname{vol}}_V = \operatorname{vol}_{V'}(K_{V'}) = K_{V'}^m.
\]

On the other hand, we know that the bundle $-D'_{|D'}$ is big and nef. Therefore, for every $i>1,$ we have
    \begin{align} \label{Negative middle term 2}
        K_{V'}^{m-i}\cdot (-D')^i \cdot V'
        =
        - (K_{V'|D})^{m-i} \cdot (-D'_{|D'})^{i-1} \cdot V_{|D'} \le 0. 
    \end{align}

Let $r'$ be a rational number between $d/8\pi$ and $d/4\pi.$ Since $K_{V'}+(1-m r')D'$ is big and nef we have 
$\Big(K_{V'}+(1-m r')D'\Big) ^m \ge 0.$ Expanding this and using $\eqref{Negative middle term 2}$ gives that 
\begin{align*}
  K_{V'}^m
&\ge \bi( mr' -1 \bi )^m(-D'_{|D'})^m \\
&\ge \bi( mr' -1 \bi )^m \mbox{ \ (because $-D'_{|D'}$ is big and nef) \ }\\
&>
(md/8\pi)^m  \mbox{ \ (because $r'> d/8\pi> 1$)}\\
&\ge 
(\frac{m}{4\pi})^m e^{m\sy/16}
 \mbox{ (by Corollary \ref{depth Bounds 2})}.
\end{align*}

\end{enumerate}
\end{proof}

\section{Effective global generation and very ampleness}
In this section, we prove Corollary \ref{Very amplness Int}, Corollary \ref{sepration of jets and seshadri cons int} based on the bound we found for $\operatorname{deg}_{\Xb}(V)$ in Theorem \ref{LargeIntersection}. First, we analyze the problem on the boundary divisor $D.$ 

\subsection{Base-point freeness and very ampleness on $D$}  
In this subsection, we prove that if the uniform depth of cusps is sufficiently large, then $2K_{\Xb}$ does not have a base point on $D,$ and moreover $3K_{\Xb}$ can separate any two points, and any tangent direction on $D.$ We first prove that the restricted bundles on the boundary satisfy these properties. Consider the decomposition of the boundary divisor $D$ to the connected components $D=\cup_{i=1}^{k}D_i.$ Due to \cite{MOK}, we know that each $D_{i}$ is an abelian variety with ample conormal bundle $O_{D_i}(-D_i).$ 

\begin{Lemma}\label{ResBun}The line bundle
$2K_{\Xb|D_i}$ is base-point free and $3K_{\Xb|D_i}$ is very ample for every $i.$ 
\end{Lemma}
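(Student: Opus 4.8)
The plan is to reduce everything to the classical theory of linear series on abelian varieties, using the adjunction formula to identify $K_{\Xb|D_i}$ explicitly. First I would recall that since each $D_i$ is an abelian variety, its canonical bundle $K_{D_i}$ is trivial. The adjunction formula then gives
\begin{align*}
K_{\Xb|D_i} \cong K_{D_i} \otimes \mathcal{O}_{D_i}(-D_i) \cong \mathcal{O}_{D_i}(-D_i),
\end{align*}
so $K_{\Xb|D_i}$ is isomorphic to the conormal bundle, which is ample by \cite{MOK}. Thus the statement reduces to: for an ample line bundle $L$ on an abelian variety $A$, the bundle $2L$ is base-point free and $3L$ is very ample.

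The second step invokes the classical Lefschetz-type theorems for abelian varieties (Mumford, \emph{Abelian Varieties}): for an ample line bundle $L$ on an abelian variety, $L^{\otimes 2}$ is globally generated (base-point free) and $L^{\otimes 3}$ is very ample. I would cite this directly rather than reprove it. Applying this with $L = K_{\Xb|D_i} \cong \mathcal{O}_{D_i}(-D_i)$ yields that $2K_{\Xb|D_i}$ is base-point free and $3K_{\Xb|D_i}$ is very ample, which is exactly the claim. (Strictly, the classical statement about $L^{\otimes 3}$ being very ample is sometimes stated for $L$ ample; if one only knows $L$ defines a polarization one may need $L$ to have no base locus, but for genuinely ample $L$ this is standard.)

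The only subtlety — and the main thing to get right — is the identification of the normal bundle contribution in adjunction and making sure the conormal bundle $\mathcal{O}_{D_i}(-D_i)$ is genuinely ample on each connected component $D_i$; this is precisely the content of Mok's theorem \cite{MOK} quoted earlier in the paper, so it is available. I do not expect a real obstacle here: once adjunction reduces the problem to ampleness questions on an abelian variety, the classical results close the argument immediately. If one wants to avoid even citing the sharp form of the very ampleness result for $3L$, an alternative is to note that $2L$ base-point free plus the theorem of the square gives an embedding for $3L$, but citing Mumford is cleaner.
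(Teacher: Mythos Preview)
Your proposal is correct and follows essentially the same route as the paper: apply adjunction to identify $K_{\Xb|D_i}$ with the ample conormal bundle $\mathcal{O}_{D_i}(-D_i)$, then invoke the classical Lefschetz-type theorem for abelian varieties. The paper cites \cite{ohbuchi1987some} rather than Mumford for the fact that $2L$ is base-point free and $3L$ is very ample when $L$ is ample on an abelian variety, but the argument is identical.
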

\begin{proof}
The adjunction formula gives that $ K_{\Xb|D_i}\cong -D_{i|D_i}.$ As the conormal bundle is ample and $D_i$ is an abelian variety, $-2 D_{i|D_i}$ is base-point free and $-3 D_{i|D_i}$ is very ample (see \cite{ohbuchi1987some}). 

\end{proof}
In the next two lemmas, we see how we can lift the sections from the restricted bundle to $\Xb.$
The base locus of a line bundle $L$ on $\Xb$ will be denoted by $\operatorname{Bs}(L).$
\begin{Lemma} \label{bndry} Suppose that the uniform depth of cusps is larger than $4\pi,$ Then, the following hold:
\begin{enumerate}
    \item $\operatorname{Bs}\bi (2K_{\Xb} \bi )\cap D= \varnothing$ 
    \item For any two points on different components of $D,$ there exists a global section of $2K_{\Xb}$ which separates them. 
\end{enumerate} 
\end{Lemma}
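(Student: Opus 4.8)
\textbf{Proof strategy for Lemma \ref{bndry}.}
The plan is to use the standard vanishing-theorem technique for lifting sections off a divisor, combined with the positivity of $K_{\Xb}$ coming from the large-depth hypothesis. Since the uniform depth of cusps exceeds $4\pi$, Theorem \ref{Tsi} (with $\lambda=1$) gives that $K_{\Xb}$ itself is ample, and in fact $K_{\Xb}+\mu D$ is ample for small negative $\mu$ as well; more usefully, $K_{\Xb}-\epsilon D$ is nef (indeed ample) for a suitable small $\epsilon>0$. This will let me run a Kawamata--Viehweg / Kodaira-type argument on the short exact sequence $0\to O_{\Xb}(2K_{\Xb}-D)\to O_{\Xb}(2K_{\Xb})\to O_{D}(2K_{\Xb|D})\to 0$, where the point is that $2K_{\Xb}-D = K_{\Xb}+(K_{\Xb}-D)$ and $K_{\Xb}-D$ is ample (again by Theorem \ref{Tsi}, since $1-\lambda$ can be taken negative once $d>4\pi$, i.e.\ $\lambda$ slightly bigger than $1$ is allowed, giving $K_{\Xb}-\delta D$ ample hence $K_{\Xb}-D$ big and nef, and with a little care ample). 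Hence $H^1(\Xb, 2K_{\Xb}-D)=0$ by Kodaira vanishing, so the restriction map $H^0(\Xb,2K_{\Xb})\to H^0(D,2K_{\Xb|D})$ is surjective.

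With surjectivity of restriction in hand, part (1) follows immediately: by Lemma \ref{ResBun}, $2K_{\Xb|D_i}$ is base-point free on each component $D_i$, so for any $x\in D$ there is a section of $2K_{\Xb|D}$ nonvanishing at $x$ (take it nonzero on the component containing $x$ and zero on the others, which is possible since $D$ is a disjoint union), and this section lifts to a global section of $2K_{\Xb}$ nonvanishing at $x$. Therefore $\operatorname{Bs}(2K_{\Xb})\cap D=\varnothing$. For part (2), given two points $p\in D_i$ and $q\in D_j$ with $i\neq j$, I choose a section of $2K_{\Xb|D}$ that is nonvanishing at $p$ (on $D_i$) and vanishing at $q$: on $D_i$ use base-point freeness to get a section not vanishing at $p$, and on all other components, in particular $D_j$, take the zero section. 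Lifting this to $H^0(\Xb,2K_{\Xb})$ produces a global section separating $p$ and $q$.

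\textbf{Where the work is.} The routine part is the cohomology-sequence bookkeeping; the only genuine issue is making sure the twist $K_{\Xb}-D$ is ample (not merely nef and big), so that Kodaira vanishing applies cleanly to $2K_{\Xb}-D=K_{\Xb}+(K_{\Xb}-D)$. This is exactly where the quantitative hypothesis $d>4\pi$ enters: Theorem \ref{Tsi} asserts $K_{\Xb}+(1-\lambda)D$ is ample for $\lambda\in(0,d(n+1)/4\pi)$, and since $d>4\pi$ the admissible range of $\lambda$ includes values $>1$, so $K_{\Xb}+(1-\lambda)D$ with $1-\lambda<0$ is ample, i.e.\ $K_{\Xb}-cD$ is ample for some $c>0$; taking $c\le 1$ and using that $-D_{|D}$ is ample (so adding a multiple of $-D$ can only help on $D$, though one must be slightly careful globally) one concludes $K_{\Xb}-D$ is at least nef and big with the needed vanishing, and in fact one can arrange ampleness by choosing the depth bound as stated. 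If ampleness of $K_{\Xb}-D$ required a strictly larger depth than $4\pi$, I would instead apply Kawamata--Viehweg vanishing to the nef and big $\Q$-divisor $K_{\Xb}-D+\text{(small ample)}$, or alternatively perturb: write $2K_{\Xb}-D=(K_{\Xb}-\epsilon D)+(K_{\Xb}-(1-\epsilon)D)$ with both summands ample for $\epsilon$ small when $d>4\pi$, which again forces $H^1(2K_{\Xb}-D)=0$. Either way the anticipated obstacle — extracting enough positivity from the depth bound to kill the obstruction group — is handled directly by Theorem \ref{Tsi}, and the rest is formal.
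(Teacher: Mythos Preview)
Your argument is correct, and for part (1) it matches the paper exactly: short exact sequence, write $2K_{\Xb}-D=K_{\Xb}+(K_{\Xb}-D)$, use Theorem \ref{Tsi} to get $K_{\Xb}-D$ ample, apply Kodaira vanishing.

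Two remarks. First, your hedging in the ``where the work is'' paragraph is unnecessary. With $d>4\pi$ and $n\ge 1$ one has $d(n+1)/4\pi>n+1\ge 2$, so $\lambda=2$ lies in the open interval of Theorem \ref{Tsi} and $K_{\Xb}+(1-2)D=K_{\Xb}-D$ is ample outright. No perturbation, no Kawamata--Viehweg, no nef-and-big detour is needed.

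Second, your treatment of part (2) is \emph{different} from the paper's and, in fact, cleaner. The paper separates $x\in D_i$ from $y\in D_j$ by producing a nonvanishing-at-$y$ section of $2K_{\Xb}-D_i$, which requires invoking a separate positivity statement (\cite[Proposition 3.6]{bakker2018kodaira}) for the bundle twisted by a single boundary component. You instead reuse the surjectivity $H^0(\Xb,2K_{\Xb})\twoheadrightarrow H^0(D,2K_{\Xb|D})$ already established in part (1), exploit that $D$ is a disjoint union so that $H^0(D,2K_{\Xb|D})=\bigoplus_i H^0(D_i,2K_{\Xb|D_i})$, and lift a section which is nonzero at $p$ on $D_i$ and identically zero on the other components. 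This avoids the extra citation and makes part (2) an immediate corollary of part (1). The paper's route has the mild advantage of producing a section vanishing along all of $D_i$ (not just at a point), but for the stated lemma your approach suffices and is more economical.
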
    
\begin{proof}
Let $L$ be $2K_{\Xb}.$
\begin{enumerate}
    \item By Lemma \ref{ResBun}, $L_{|D}$ is base-point free and therefore it is enough to show that we can lift the global sections from $D$ to $\Xb$, that is,
    $H^0(\Xb, L) \longrightarrow H^0(D, L_{|D})$ is surjective.
    Consider the following exact sequence on $\Xb:$
     $$0 \longrightarrow L-D 
    \longrightarrow L
    \longrightarrow L_{|D}
    \longrightarrow 0.
    $$
    Writing the long exact sequence we can see that it is sufficient to show $H^1(\Xb, L-D)=0.$ 
   As $L-D= K_{\Xb} +(K_{\Xb}-D),$ if the uniform depth is sufficiently large, then by Theorem \ref{Tsi} $K_{\Xb}-D$ is ample. Therefore, the vanishing of $H^1(\Xb, L-D)$ follows from Kodaira's vanishing theorem. 
   
\item Suppose that we want to separate $x\in D_i$ and $y\in D_j$ with $i\neq j.$ It is sufficient to find a global section of $L_i:=L-D_i$ which does not vanish at $y.$ We can argue similar to the first part. Concretely, since $D_i$ and $D_j$ are disjoint, the line bundle $L_{i|D_j}$ is isomorphic to the line bundle $L_{|D_j},$ which we know is base-point free by Lemma \ref{ResBun}. Therefore, it is enough to show that we can lift the global sections on $D_j$ to $\Xb$, that is,
    $H^0(\Xb, L_i) \longrightarrow H^0(D, L_{i|D_j})$ is surjective. Consider the exact sequence 
   $$0 \longrightarrow L_i-D_j 
    \longrightarrow L_i
    \longrightarrow L_{i|D_j}
    \longrightarrow 0.
    $$
 Writing the long exact sequence we can see that it is sufficient to show $H^1(\Xb, L_i-D_j)=0.$ By Lemma \ref{two component positivity}, we get that $K_{\Xb}-D_i-D_j$ is big and nef. Since $L_i-D_j=K_{\Xb}+(K_{\Xb}-D_i-D_j)$, the Kawamata-Viehweg vanishing theorem implies that $H^1(\Xb, L_i-D_j)=0.$ 
 
\end{enumerate}
\end{proof}

\begin{Lemma} \label{Very Ampleness on Boundray}
    If the uniform depth of cusps is larger than $2\pi,$ then $3K_{\Xb}$ can 
    separate any two points on a connected component of $D,$ and at each point of 
$D$ it can separate any two tangent directions.

\end{Lemma}
\begin{proof}
    By Lemma \ref{bndry} and Lemma \ref{ResBun}, it is enough to show that we can lift the sections from the boundary, i.e.,
    $$H^{0}(\Xb, 3K_{\Xb}) \longrightarrow 
        H^{0}(D, 3K_{\Xb|D}) \longrightarrow
        0.
    $$
    Hence, it is enough to show that $H^1(\Xb, 3K_{\Xb}-D)=0.$ Since $d>2\pi,$ it follows from Theorem \ref{Tsi} that $2K_{\Xb}-D$ is ample. Therefore, by Kodaira's vanishing theorem we get that $H^1(\Xb, 3K_{\Xb}-D)=0.$ 
\end{proof}
\subsection{Global generation and very ampleness on $\Xb$}
In this subsection, we see how we can conclude effective global generation and effective very ampleness results by using Theorem \ref{LargeIntersection}. We first recall the famous theorem of Angehrn and Siu on pointwise base-point freeness:   
\begin{theorem} \cite[Theorem
0.1]{angehrn1995effective} \label{Siu Angehrn} Let $Y$ be a smooth projective variety of dimension $n,$ and let $L$ be an ample line bundle on $Y.$ Fix a point $y\in Y,$ and assume that 
\begin{align}
    L^{m}\cdot V > \bi(\frac{n(n+1)}{2}\bi)^m
\end{align}
for every subvariety $V$ of dimension $m$ passing through $y.$
Then, $K_{Y}+L$ has a section that does not vanish at $y.$   
\end{theorem}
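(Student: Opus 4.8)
The plan is to prove this in the standard way, through multiplier ideals and Nadel vanishing — the algebraic reformulation (Ein--Lazarsfeld, Koll\'ar) of the analytic argument of Angehrn--Siu. The aim is to construct an effective $\mathbb{Q}$-divisor $\Delta$ on $Y$ with $\Delta\equiv(1-\varepsilon)L$ numerically for some $\varepsilon>0$ such that $y$ is an isolated point of the cosupport $V(\mathcal{J}(Y,\Delta))$ of the associated multiplier ideal. Granting such a $\Delta$, consider
\[
0 \longrightarrow \mathcal{O}_Y(K_Y+L)\otimes\mathcal{J}(Y,\Delta) \longrightarrow \mathcal{O}_Y(K_Y+L) \longrightarrow \mathcal{O}_Y(K_Y+L)\otimes\mathcal{O}_Y/\mathcal{J}(Y,\Delta) \longrightarrow 0 .
\]
Since $L-\Delta\equiv\varepsilon L$ is ample, Nadel vanishing gives $H^1\bigl(Y,\mathcal{O}_Y(K_Y+L)\otimes\mathcal{J}(Y,\Delta)\bigr)=0$, so $H^0(Y,K_Y+L)$ surjects onto the sections of $\mathcal{O}_Y(K_Y+L)\otimes\mathcal{O}_Y/\mathcal{J}(Y,\Delta)$; because $y$ is an isolated point of the cosupport and $\mathcal{J}(Y,\Delta)_y\subseteq\mathfrak{m}_y$, one may choose a section whose image at $y$ is a unit, i.e.\ a section of $K_Y+L$ not vanishing at $y$. (If $V(\mathcal{J})$ has components away from $y$, one first twists down by a general auxiliary member of $|mL|$ vanishing on them, which is harmless.)

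All the content is in the construction of $\Delta$, which I would carry out by descending induction on the dimension of the minimal log-canonical centre of an auxiliary pair through $y$. For the initial step, taking $V=Y$ (so $m=n$) in the hypothesis gives $L^n>\bigl(n(n+1)/2\bigr)^n$, hence $(L^n)^{1/n}>n(n+1)/2$; asymptotic Riemann--Roch then produces, for $k\gg0$, a divisor $D_0\in|kL|$ with $\operatorname{mult}_yD_0$ strictly larger than $k\cdot n(n+1)/2$, and rescaling yields an effective $\Delta_1\equiv\lambda_1L$ with $\lambda_1=\tfrac{2}{n+1}$ and $(Y,\Delta_1)$ not klt at $y$; let $Z_1\ni y$ be a minimal log-canonical centre, of dimension $\le n-1$. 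In the inductive step, given $\Delta_j\equiv\lambda_jL$ with minimal log-canonical centre $Z_j\ni y$ of dimension $d\ge1$, apply the hypothesis to $V=Z_j$ to get $L^d\cdot Z_j>\bigl(n(n+1)/2\bigr)^d\ge\bigl(d(d+1)/2\bigr)^d$; using that minimal log-canonical centres are normal (Kawamata), subadjunction, and the same dimension count performed on $Z_j$, together with the standard ``tie-breaking'' perturbation (to make the minimal centre through $y$ unique) and the ``concentration'' step, one obtains $\Delta_{j+1}\equiv\lambda_{j+1}L$ whose minimal log-canonical centre through $y$ has dimension $<d$. The arithmetic is the whole point: the coefficient spent at the step whose centre has dimension $d$ is of order $d/(n(n+1)/2)$, and since the centre dimensions encountered are distinct values in $\{1,\dots,n\}$, one has $\sum d/(n(n+1)/2)=1$, so the strict inequality in the hypothesis keeps every $\lambda_j<1$; after at most $n$ steps the centre is $\{y\}$, and a final perturbation makes $y$ isolated in $V(\mathcal{J}(Y,\Delta))$.

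The main obstacle is exactly this inductive cutting-down: arranging that the minimal log-canonical centre be unique and that its dimension drop strictly at each step, while keeping the accumulated coefficient below $1$. This rests on the ``tie-breaking'' lemma (perturbing $\Delta$ by a small general divisor vanishing along the centre to remove all but one minimal centre) and on the ``concentration''/subadjunction step, where one exploits $\lambda_j<1$ to absorb a slightly-larger-than-one multiple and to transfer the positivity of $L$ down to $Z_j$; the possible singularity of $Z_j$ is dealt with by normality of minimal lc centres plus subadjunction, or by a direct resolution as in the original paper. By contrast, Nadel vanishing, the final exact-sequence argument, and the elementary bound $\bigl(n(n+1)/2\bigr)^d\ge\bigl(d(d+1)/2\bigr)^d$ for $1\le d\le n$ are routine.
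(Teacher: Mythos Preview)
Your sketch is a faithful outline of the standard multiplier-ideal proof of the Angehrn--Siu theorem, but there is nothing to compare it against: the paper does not prove this statement. It is quoted verbatim as \cite[Theorem~0.1]{angehrn1995effective} and used as a black box (applied with $L=K_{\Xb}$ in Theorem~\ref{Freeness} and with $L=K_{\Xb}-D$ in Proposition~\ref{Freeness of 2k-D}). So your proposal is not an alternative to anything in the paper; it simply supplies a proof where the paper gives only a citation.
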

Combining Angehrn and Siu's result with our Theorem \ref{LargeIntersection} gives that if $\sy$ is sufficiently large relative to $n,$ then $2K_{\Xb}$ is globally generated: 
\begin{theorem} \label{Freeness}
If $\operatorname{sys(X)}\ge 20\ln\bi(5n+(4\pi)^4\bi),$ then $2K_{\Xb}$ is globally generated.
 \end{theorem}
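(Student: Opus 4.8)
\textbf{Proof proposal for Theorem \ref{Freeness}.} The plan is to apply Angehrn--Siu (Theorem \ref{Siu Angehrn}) with $Y=\Xb$ and $L=K_{\Xb}$, so that $K_Y+L=2K_{\Xb}$, and to verify the numerical hypothesis $L^m\cdot V>\bigl(\tfrac{n(n+1)}{2}\bigr)^m$ at every point $y\in\Xb$ for every $m$-dimensional subvariety $V\ni y$. First I would check that $K_{\Xb}$ is ample: since $\sy\ge 20\ln(5n+(4\pi)^4)\ge 4\ln(5n+(4\pi)^4)$, the Corollary after Theorem \ref{systolDepth} gives that the uniform depth of cusps exceeds $4\pi$, whence Theorem \ref{Tsi} (with $\lambda$ close to $1$) gives ampleness of $K_{\Xb}$. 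So Theorem \ref{Siu Angehrn} is applicable.

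Next I would split into two cases according to whether $V\subset D$ or not. If $V\not\subset D$, then Theorem \ref{LargeIntersection} applies (its hypothesis $\sy\ge 4\ln(5n+(4\pi)^4)$ is implied by ours), giving $K_{\Xb}^m\cdot V>\bigl(\tfrac{n}{4\pi}\bigr)^m e^{m\sy/16}$. If instead $V\subset D$, then $V$ lies in some component $D_i$, an abelian variety with $K_{\Xb|D_i}\cong -D_{i|D_i}$ ample; here one needs a separate lower bound for $(-D_{i|D_i})^m\cdot V$. I expect this to come from the positivity of the conormal bundle together with the depth bound — essentially the same mechanism used in case (2) of Theorem \ref{LargeIntersection}, restricted to $D_i$ — yielding a bound of comparable shape, something like $\bigl(\tfrac{n}{4\pi}\bigr)^m e^{m\sy/16}$ or at least $(\text{depth})^m$-type growth. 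Either way, the point is that in both cases $K_{\Xb}^m\cdot V$ is bounded below by a quantity that grows like $e^{m\sy/16}$ up to the factor $\bigl(\tfrac{n}{4\pi}\bigr)^m$.

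Finally I would do the elementary estimate showing $\bigl(\tfrac{n}{4\pi}\bigr)^m e^{m\sy/16}>\bigl(\tfrac{n(n+1)}{2}\bigr)^m$ for all $m\ge 1$, i.e. $e^{\sy/16}>\tfrac{4\pi(n+1)}{2}=2\pi(n+1)$, i.e. $\sy>16\ln(2\pi(n+1))$. This should follow comfortably from $\sy\ge 20\ln(5n+(4\pi)^4)$, since $5n+(4\pi)^4\ge 2\pi(n+1)$ and $20>16$ give room to spare (the extra slack from $20$ versus $16$ is presumably there precisely to absorb the $V\subset D$ case and the strictness). Having verified the hypothesis of Theorem \ref{Siu Angehrn} at every $y\in\Xb$, we conclude that $2K_{\Xb}$ has a section not vanishing at $y$ for each $y$, i.e. $2K_{\Xb}$ is globally generated (base-point free).

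\textbf{Main obstacle.} The genuinely delicate point is the case $V\subset D$: Theorem \ref{LargeIntersection} as stated only covers subvarieties \emph{not} contained in $D$, so I need an independent lower bound for $K_{\Xb}^m\cdot V = (-D_{i|D_i})^{m}\cdot V$ on the abelian boundary components. I would handle this by combining the ampleness of the conormal bundle $-D_{i|D_i}$ with a lower bound on its ``size'' — plausibly phrased via $\operatorname{sys}(D)$ as in Corollary \ref{full Very amplness Int}, or via an intersection-theoretic inequality of Bakker--Tsimerman type on $D_i$ — to get the required exponential-in-$\sy$ growth. If no such bound is available for arbitrary subvarieties of $D_i$, the fallback is that one only needs the inequality for subvarieties through a \emph{given} point $y$; if $y\notin D$ then automatically $V\not\subset D$ and the problem disappears, and if $y\in D$ one still must treat $V\subset D_i$, so this case cannot be entirely avoided and is where the real work lies.
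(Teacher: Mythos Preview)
Your treatment of points $y\in X=\Xb\setminus D$ is exactly what the paper does: since any $V\ni y$ with $y\notin D$ cannot be contained in $D$, Theorem \ref{LargeIntersection} supplies the numerical hypothesis of Angehrn--Siu, and your elementary estimate $e^{\sy/16}>2\pi(n+1)$ is the right one.

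The genuine gap is the case $y\in D$, and your proposed fix will not work. For $V\subset D_i$ one has $K_{\Xb}^m\cdot V=(-D_{i|D_i})^m\cdot V$, which is simply the degree of $V$ with respect to an ample line bundle on an abelian variety. There is \emph{no} lower bound for this quantity in terms of $\sy$ or the depth of cusps: the polarization type of $D_i$ is an independent datum, and nothing in the hypotheses of the theorem prevents $(-D_{i|D_i})^m\cdot V$ from being, say, $1$. The ``mechanism of case (2) of Theorem \ref{LargeIntersection}'' does not restrict to $D_i$ in any useful way, because that argument needs $V\cap D$ to be a proper intersection so that $D\cdot V$ is effective and positive. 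This is precisely why the later Corollary \ref{full Very amplness Int} introduces a separate hypothesis on $\operatorname{sys}(D)$; without it, Angehrn--Siu cannot be verified at boundary points.

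The paper sidesteps Angehrn--Siu entirely for $y\in D$ and instead argues directly (Lemma \ref{bndry}). By adjunction $K_{\Xb|D_i}\cong -D_{i|D_i}$, and on any abelian variety twice an ample line bundle is base-point free; so $2K_{\Xb|D}$ already has a section nonvanishing at $y$. It remains to lift that section to $\Xb$, which follows from the surjectivity of $H^0(\Xb,2K_{\Xb})\to H^0(D,2K_{\Xb|D})$, i.e.\ from $H^1(\Xb,2K_{\Xb}-D)=0$. Writing $2K_{\Xb}-D=K_{\Xb}+(K_{\Xb}-D)$ and noting that $K_{\Xb}-D$ is ample once $d>4\pi$ (Theorem \ref{Tsi} with $\lambda=2$), Kodaira vanishing gives this immediately. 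Thus the boundary case requires no degree bound at all, only classical facts about abelian varieties and a cohomology-vanishing lift.
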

\begin{proof}
Using Corollary  \ref{depth Bounds 2} we get that $d>4\pi.$ Therefore, by Lemma \ref{bndry}, $2K_{\Xb}$ does not have any base point on $D.$ On the other hand, Theorem \ref{LargeIntersection} implies that for every $m$-dimensional subvariety $V \subset \Xb$ which is not contained in $D,$ we have
\begin{align*}
     K_{\Xb}^{m}\cdot V 
     &\ge 
      (\frac{n}{4\pi})^m \cdot e^{m\sy/16 } \\
     &\ge 
     n^m  \bi(5 n+(4\pi)^4\bi)^m   \mbox{(by the bound on $\sy$)} \\
     &> 
     \bi(\frac{n(n+1)}{2}\bi)^m . 
\end{align*}
Therefore, Theorem \ref{Siu Angehrn} implies that for every point $x\in \Xb \setminus D,$ there is a section of $2K_{\Xb}$ which does not vanish at $x.$ Hence, $2K_{\Xb}$ is globally generated.     
 \end{proof}
 Now, we prove a proposition which will be used to show that $2K_{\Xb}$ can separate any point in $X$ from any point in $D:$ 
\begin{Proposition} \label{Freeness of 2k-D}
   If $\operatorname{sys(X)}\ge 20\ln\bi(5n+(8\pi)^4\bi),$ then for every $x\in X$ there exists $s\in H^0(\Xb, 2K_{\Xb}-D)$ such that $s$ does not vanish at $x.$
\end{Proposition}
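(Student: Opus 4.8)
The plan is to mimic the proof strategy of Theorem \ref{Freeness}, but applied to the line bundle $L = 2K_{\Xb} - D$ instead of $2K_{\Xb}$, invoking the version of Angehrn--Siu (Theorem \ref{Siu Angehrn}) in the form $K_{\Xb} + (K_{\Xb} - D)$. First I would fix $x \in X$ (so $x \notin D$) and observe that, since $\operatorname{sys}(X) \ge 20\ln\bigl(5n+(8\pi)^4\bigr)$, the bound \eqref{depth Bounds 2} gives that the uniform depth of cusps $d$ exceeds $4\pi$, whence Theorem \ref{Tsi} shows that $K_{\Xb} - D$ is ample (take $\lambda$ close to $1$ in the allowed range $(0, d(n+1)/4\pi)$, which contains $1$ once $d > 4\pi/(n+1)$). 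So we are in a position to apply Theorem \ref{Siu Angehrn} with $Y = \Xb$ and ample line bundle $L = K_{\Xb} - D$: it will produce a section of $K_{\Xb} + (K_{\Xb} - D) = 2K_{\Xb} - D$ not vanishing at $x$, provided the numerical hypothesis $(K_{\Xb}-D)^m \cdot V > \bigl(\tfrac{n(n+1)}{2}\bigr)^m$ holds for every $m$-dimensional subvariety $V$ through $x$.

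The heart of the argument is therefore verifying this intersection inequality for $L = K_{\Xb} - D$. The idea is to compare $(K_{\Xb} - D)^m \cdot V$ with $K_{\Xb}^m \cdot V$, which is controlled from below by Theorem \ref{LargeIntersection}, namely $K_{\Xb}^m \cdot V > (\tfrac{n}{4\pi})^m e^{m\operatorname{sys}(X)/16}$ (note $\operatorname{sys}(X) \ge 4\ln(5n+(8\pi)^4) \ge 4\ln(5n+(4\pi)^4)$, so the hypothesis of that theorem is met). Since $x \in X \cap V$, $V$ is not contained in $D$; if moreover $V \cap D = \varnothing$ then $(K_{\Xb}-D)^m \cdot V = K_{\Xb}^m \cdot V$ and we are immediately done. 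If $V \cap D \neq \varnothing$, I would expand $(K_{\Xb}-D)^m \cdot V = \sum_{i=0}^m \binom{m}{i}(-1)^i K_{\Xb}^{m-i}\cdot D^i \cdot V$ and use, exactly as in the proof of Theorem \ref{LargeIntersection}, that each mixed term with $i \ge 1$ satisfies $K_{\Xb}^{m-i}\cdot(-D)^i\cdot V = -(K_{\Xb|D})^{m-i}\cdot(-D_{|D})^{i-1}\cdot V_{|D} < 0$ by ampleness of $K_{\Xb|D}$ and of the conormal bundle $-D_{|D}$. This is where a little care is needed with signs: the terms $\binom{m}{i}(-1)^i K_{\Xb}^{m-i}D^i\cdot V$ equal $\binom{m}{i}K_{\Xb}^{m-i}(-D)^i\cdot V$, which are negative for $i\ge 1$, so dropping them only decreases the sum — hence $(K_{\Xb}-D)^m\cdot V \le K_{\Xb}^m\cdot V$, the wrong direction. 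The fix, as in Theorem \ref{LargeIntersection}, is to instead use $\bigl(K_{\Xb} - ((n+1)d/4\pi - 1)D\bigr)^m\cdot V \ge 0$ (Theorem \ref{Tsi}) together with $d > 4\pi$ to get $K_{\Xb}^m\cdot V \ge ((n+1)d/4\pi - 1)^m \ge (nd/4\pi)^m$, and one also checks directly that $(K_{\Xb}-D)^m\cdot V \ge (nd/4\pi - 1)^m(-D_{|D})^m\cdot \cdots$, i.e. that the same lower bound $(\tfrac{n}{4\pi})^m e^{m\operatorname{sys}(X)/16}$ holds for $(K_{\Xb}-D)^m\cdot V$; the constant $(8\pi)^4$ rather than $(4\pi)^4$ in the hypothesis is precisely what absorbs the loss from replacing $K_{\Xb}$ by $K_{\Xb}-D$.

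Finally, I would close the loop: with $\operatorname{sys}(X) \ge 20\ln\bigl(5n+(8\pi)^4\bigr)$ we get
\[
(K_{\Xb}-D)^m\cdot V \;>\; \Bigl(\frac{n}{4\pi}\Bigr)^m e^{m\operatorname{sys}(X)/16} \;\ge\; n^m\bigl(5n+(8\pi)^4\bigr)^m \;>\; \Bigl(\frac{n(n+1)}{2}\Bigr)^m,
\]
so the hypothesis of Theorem \ref{Siu Angehrn} is satisfied for every subvariety $V$ through $x$, and the theorem yields the desired section $s \in H^0(\Xb, 2K_{\Xb}-D)$ with $s(x) \neq 0$. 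The main obstacle, as flagged above, is getting the intersection-number inequality to point the right way in the case $V \cap D \neq \varnothing$: one must route through the ampleness statement of Theorem \ref{Tsi} for $K_{\Xb} - \lambda D$ with $\lambda$ close to its maximal allowed value rather than naively expanding $(K_{\Xb}-D)^m$, and keep track of the numerical constants carefully so that the factor $e^{m\operatorname{sys}(X)/16}$ still dominates $\bigl(\tfrac{n(n+1)}{2}\bigr)^m$ after the coefficient in the logarithm is bumped from $4\pi$ to $8\pi$.
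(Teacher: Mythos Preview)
Your proposal is correct and follows essentially the same route as the paper: write $2K_{\Xb}-D = K_{\Xb} + (K_{\Xb}-D)$, verify that $K_{\Xb}-D$ is ample via Theorem~\ref{Tsi}, and then check the Angehrn--Siu degree hypothesis by expanding against the nef limit $K_{\Xb}-\bigl(\tfrac{(n+1)d}{4\pi}-1\bigr)D$ and using that the mixed terms restrict to negatives of powers of the ample conormal bundle $-D_{|D}$. The paper carries out this expansion with $K_{\Xb}-2D$ in place of $K_{\Xb}-D$ (so that $(K_{\Xb}-2D)_{|D}=-3D_{|D}$, giving the $3^{\,i}$ factors in \eqref{Neg Intermediate}), but the mechanism is identical and your sketch of the $V\cap D\neq\varnothing$ case---route through the maximal $\lambda$, drop the negative cross terms, land on a bound of the shape $\bigl(\tfrac{n}{4\pi}\bigr)^m e^{m\operatorname{sys}(X)/16}$---is exactly what is needed.
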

\begin{proof}
    Since $\operatorname{sys(X)}\ge 20\ln\bi(5n+(8\pi)^4\bi),$ the uniform depth of cusps is larger than $4\pi$(see Corollary  \ref{depth Bounds 2}) and therefore by Bakker-Tsimerman's result,Theorem \ref{Tsi}, it follows that $K_{\Xb}+(1-\lambda)D$ is ample for $\lambda \in (0, \frac{(n+1)d}{4\pi}).$ On the other hand, as $K_{\Xb|D}\cong -D_{D}$ and $-D_{|D}$ is ample, for every subvariety $V$ of dimension $m$ and every $1\le i\le m$ we have
    \begin{align} \nonumber
        (K_{\Xb}-2D)^{m-i} (-D)^i \cdot V_{|D} 
        &=
        - (K_{\Xb}-2D)_{|D}^i (-D_{|D})^{j-1} \cdot V\\ \nonumber
        &=
        - 3^i (-D_{|D})^{n-1} \cdot V_{|D}\\\label{Neg Intermediate}
        &\le 0.
    \end{align}
   Expanding $(K_{\Xb}-(1-\frac{(n+1)d}{4\pi})D)^m \cdot V \ge 0$ and using \eqref{Neg Intermediate} we get: 
    \begin{align*}
        (K_{\Xb}-2 D)^m \cdot V 
        &\ge
        (\frac{(n+1)d}{4\pi}+1)^{m}(-D_{|D})^{m-1} \cdot V_{|D} \\
        &\ge
        (\frac{(n+1)d}{4\pi})^m \mbox{(by the ampleness of $-D_{|D}$)}\\
 &\ge \bi(\frac{n+1}{4\pi}\bi)^m
 \cdot
e^{m\sy/16 } \mbox{ \ (by Corollary \ref{depth Bounds 2})} \\
&>
(n+1)^m n^m \mbox{ \ (by the bound on $\sy$)}
    \end{align*}
Hence, Theorem \ref{Siu Angehrn} gives that $2K_{\Xb}-D$ has a global section which does not vanish at $x.$ 
  
\end{proof}
We recall the result of Ein-Lazarsfeld-Nakamaye on the pointwise separation of jets:
\begin{theorem} (\cite[Theorem 4.4]{ein1996zero})\label{Laz}
    Let $Y$ be a smooth projective variety of dimension $n$ and let $L$ be an
ample line bundle on $Y$ satisfying 
$L^n > (n + s)^n.$ Let $b$ be a non-negative number
such that $K_Y+ bL$ is nef. Suppose that $m_0$ is a positive integer such that $m_0 L$ is
free. Then, for any point $y\in Y$ either

(a) $K_{Y}+L$ separates $s$-jets at $y$, or

(b) there exists a dimension $m$ subvariety $V$ containing $y$ and satisfying
\begin{align}
    \operatorname{deg}_{L}(V) 
    \le
    \Big(b+m_0 \cdot m+ \frac{n!}{(n-m)!}   \Big)^{n-m} (n+s)^{n} 
\end{align}
\end{theorem}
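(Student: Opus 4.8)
The plan is to deduce effective separation of $s$-jets at $y$ from Nadel's vanishing theorem, after producing a suitable effective $\Q$-divisor concentrated at $y$. Recall that $K_Y+L$ separates $s$-jets at $y$ precisely when $H^0(Y,K_Y+L)\to H^0\bigl(Y,(K_Y+L)\otimes\mathcal O_Y/I_y^{s+1}\bigr)$ is surjective, where $I_y$ is the ideal sheaf of $y$. So it is enough to build an effective $\Q$-divisor $A$ with $A\equiv_{\Q}(1-\delta)L$ for some $\delta\in(0,1)$ whose multiplier ideal $\mathcal J(Y,A)$ coincides with $I_y^{s+1}$ in a Zariski neighbourhood of $y$ and has $y$ as an isolated point of its cosupport. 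Indeed, tensoring $0\to\mathcal J(Y,A)\to\mathcal O_Y\to\mathcal O_Y/\mathcal J(Y,A)\to 0$ with $K_Y+L$ and using Nadel vanishing, $H^1\bigl(Y,(K_Y+L)\otimes\mathcal J(Y,A)\bigr)=0$ since $(K_Y+L)-A=K_Y+(L-A)$ with $L-A\equiv_{\Q}\delta L$ ample; this gives the required surjection onto the skyscraper at $y$. Everything therefore reduces to constructing $A$.

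For the construction one exploits the numerical hypothesis $L^n>(n+s)^n$. An asymptotic Riemann--Roch estimate gives $h^0(Y,kL)=\tfrac{L^n}{n!}k^n+O(k^{n-1})$, whereas imposing vanishing to order $\ge a$ at $y$ is $\binom{a+n-1}{n}=\tfrac{a^n}{n!}+O(a^{n-1})$ linear conditions; hence for $k\gg 0$ there is a nonzero $D_k\in|kL|$ with $\operatorname{mult}_yD_k>k(n+s)$, and $\Delta:=\tfrac1kD_k\equiv_{\Q}L$ satisfies $\operatorname{mult}_y\Delta>n+s$. Rescaling $\Delta$ by its log canonical threshold $c$ at $y$ --- which is $<\tfrac{n}{n+s}<1$ --- gives $\Delta':=c\Delta\equiv_{\Q}cL$ with $c<1$ for which $(Y,\Delta')$ is log canonical but not klt at $y$, and the excess multiplicity beyond $n+s$ is what lets one eventually trap the multiplier ideal inside $I_y^{s+1}$. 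One then runs the standard tie-breaking: perturb $\Delta'$ by a generic member of $|m_0L|$ --- available because $m_0L$ is free, and whose general member avoids any prescribed subvariety disjoint from $y$ --- to cut down the dimension of the non-klt locus while keeping $y$ in it. The freeness of $m_0L$ is exactly what makes this cutting-down possible, and the nefness of $K_Y+bL$ is what preserves the positivity needed for Nadel vanishing and for Kawamata subadjunction once one restricts to a non-klt centre. Iterating, one reaches either the case where $y$ is isolated in the non-klt locus --- after cleaning up the multiplier ideal to agree with $I_y^{s+1}$ near $y$, which is where the slack in $L^n>(n+s)^n$ is spent, this is alternative (a) --- or a positive-dimensional minimal log canonical centre $V\ni y$ of dimension $m$.

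In the latter case one extracts the degree bound. By Kawamata subadjunction, $(K_Y+\Delta')|_V=K_V+B_V$ with $B_V$ effective on $V$, so $K_V$ is dominated by $(cL)|_V$ up to an effective boundary; together with $m_0L|_V$ free and $K_Y+bL$ nef, one reruns inside $V$ the same volume / vanishing-order count that produced $\Delta$. Either this manufactures a smaller subvariety of $V$ through $y$, contradicting minimality of the centre, or $\deg_L V=L^m\cdot V$ is already too small for this to work. Tracking the cost of the process --- the term $m_0\cdot m$ from the members of $|m_0L|$ used to cut out $V$, the falling factorial $\tfrac{n!}{(n-m)!}$ from the adjunction/derivative corrections accumulated over the $n-m$ steps, and the shift $b$ from the nef twist --- the surviving inequality is exactly $\deg_L V\le\bigl(b+m_0m+\tfrac{n!}{(n-m)!}\bigr)^{n-m}(n+s)^n$, which is alternative (b).

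The main obstacle is the inductive book-keeping in the positive-dimensional case: one must transport $L^n>(n+s)^n$ faithfully through each restriction to a log canonical centre, pin the coefficients of the cutting divisors so the pair stays at its log canonical threshold exactly at $y$ and the rescaled divisor stays $\Q$-linearly below $L$ (here $K_Y+bL$ nef is essential), and verify that the accumulated corrections contribute precisely the stated constants rather than something larger. The inputs --- Nadel vanishing, the Riemann--Roch multiplicity estimate, and Kawamata subadjunction --- are standard and may be quoted.
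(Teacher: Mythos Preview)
The paper does not supply its own proof of this statement: Theorem~\ref{Laz} is quoted verbatim from \cite[Theorem~4.4]{ein1996zero} and used as a black box in the proof of Theorem~\ref{Seperation of Jets}. There is therefore nothing in the paper to compare your argument against.

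As a standalone sketch of the Ein--Lazarsfeld--Nakamaye argument, your outline has the right architecture --- produce a divisor of high multiplicity via Riemann--Roch, take the log canonical threshold, cut down the non-klt locus inductively using sections of $|m_0L|$, and invoke Nadel vanishing --- but a few points are imprecise. First, for separation of $s$-jets you need the multiplier ideal $\mathcal J(Y,A)$ to be \emph{contained in} $\mathfrak m_y^{s+1}$ near $y$ with $y$ isolated in the cosupport, not to \emph{coincide with} $I_y^{s+1}$; equality is neither needed nor typically achievable. Second, invoking Kawamata subadjunction is anachronistic for a 1996 paper and is not how the original argument proceeds; Ein--Lazarsfeld--Nakamaye work directly with restrictions and Ohsawa--Takegoshi-type extension, and the constant $b + m_0 m + \tfrac{n!}{(n-m)!}$ arises from explicit control of how much of $L$ is consumed at each inductive step rather than from a subadjunction formula. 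Third, the final paragraph asserts that the book-keeping yields exactly the displayed bound, but this is the entire content of the theorem and your sketch does not actually track the coefficients through the induction. If you intend to reproduce the proof rather than cite it, that accounting is the substantive work and cannot be waved through.
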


\begin{definition}\label{DefSesh}
    
Let $Y$ be a smooth projective
variety and let $L$ be a nef line bundle on $Y.$ Fix a point $y\in Y.$ The Seshadri constant of $L$ at $y$ is the real number
\begin{align*}
    \epsilon(L,y)
    =
    \operatorname{inf}\frac{L\cdot C}{\operatorname{mult}_{y}(C) },
\end{align*}
where the infimum is taken over all irreducible curves $C$ passing through $y.$
\end{definition}

Plugging in Theorem \ref{LargeIntersection} and Theorem \ref{Freeness} to the result of Ein-Lazarsfeld-Nakamaye allows us to separates $s$-jets of $2K_{\Xb}$ on $X$ if $\sy$ is sufficiently large with respect to $n$ and $s:$ 
\begin{theorem}
\label{Seperation of Jets} Let $s$ be a positive integer. Suppose that
$$\sy 
\ge 
20\operatorname{max}\{n\ln\bi((1+2n+n!)(n+s)\bi), \ln\bi(5 n+(8\pi)^4\bi) \}.$$
Then for every $x\in X,$ the line bundle $2K_{\Xb}$ separates $s$-jets at $x.$ In particular, for every $x$ we have  
$\epsilon(K_{\Xb},x)\ge s/2.$
\end{theorem}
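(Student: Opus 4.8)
The plan is to apply the Ein--Lazarsfeld--Nakamaye theorem (Theorem \ref{Laz}) with $Y=\Xb$, $L=K_{\Xb}$, and $s$ the given jet order, and to rule out its alternative (b) using the degree lower bound of Theorem \ref{LargeIntersection} together with the boundary analysis of Lemmas \ref{bndry}, \ref{Very Ampleness on Boundray} and Proposition \ref{Freeness of 2k-D}. First I would record that the hypothesis on $\sy$ forces $\sy \ge 20\ln\bi(5n+(8\pi)^4\bi)$, so by \eqref{depth Bounds 2} the uniform depth of cusps exceeds $4\pi$, hence by Theorem \ref{Tsi} the bundle $K_{\Xb}$ is ample; this lets us take $b=0$ in Theorem \ref{Laz} (since $K_{\Xb}+0\cdot L = K_{\Xb}$ is nef, indeed ample). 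By Theorem \ref{Freeness}, $2K_{\Xb}$ is globally generated, so we may take $m_0=2$. The top self-intersection bound $K_{\Xb}^n > (n+s)^n$ follows from Theorem \ref{LargeIntersection} applied to $V=\Xb$ together with the lower bound on $\sy$: one checks $(\tfrac{n}{4\pi})^n e^{n\sy/16} > (n+s)^n$ reduces, after taking $n$-th roots, to $e^{\sy/16} > 4\pi(n+s)$, which holds since $\sy \ge 20 n\ln\bi((1+2n+n!)(n+s)\bi) \ge 16\ln\bi(4\pi(n+s)\bi)$.

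Next I would fix $x \in X$ and suppose, for contradiction, that alternative (a) of Theorem \ref{Laz} fails, i.e. $K_{\Xb}+L = 2K_{\Xb}$ does not separate $s$-jets at $x$. Then there is a subvariety $V \ni x$ of dimension $m$ (with $1 \le m \le n$) satisfying
\begin{align*}
    \operatorname{deg}_{\Xb}(V) \le \Big(0 + 2m + \frac{n!}{(n-m)!}\Big)^{n-m}(n+s)^n \le \bi((1+2n+n!)(n+s)\bi)^n.
\end{align*}
Here I used $2m + \tfrac{n!}{(n-m)!} \le 2n + n!$ and $(n+s)^n$ crudely, absorbing everything into the cleaner constant $(1+2n+n!)(n+s)$ raised to the $n$. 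Since $x \in X$ lies in the interior, $V$ is not contained in $D$, so $X \cap V \ne \varnothing$ and Theorem \ref{LargeIntersection} applies to give
\begin{align*}
    \operatorname{deg}_{\Xb}(V) = K_{\Xb}^m \cdot V > \Big(\frac{n}{4\pi}\Big)^m e^{m\sy/16}.
\end{align*}
Comparing the two displays, a contradiction follows once $(\tfrac{n}{4\pi})^m e^{m\sy/16} \ge \bi((1+2n+n!)(n+s)\bi)^{mn/m}$... more carefully, since the ELN bound has exponent $n$ rather than $m$ I would argue: $\operatorname{deg}_{\Xb}(V) > (\tfrac{n}{4\pi})^m e^{m\sy/16} \ge \bi((1+2n+n!)(n+s)\bi)^{n}$ provided $(\tfrac{n}{4\pi})e^{\sy/16} \ge \bi((1+2n+n!)(n+s)\bi)^{n/m} $ for all $1\le m\le n$, i.e. provided $e^{\sy/16} \ge \tfrac{4\pi}{n}\bi((1+2n+n!)(n+s)\bi)^{n}$, which is exactly what $\sy \ge 20n\ln\bi((1+2n+n!)(n+s)\bi)$ secures (with room to spare, since $20n \ge 16n$ and $\tfrac{4\pi}{n} \le $ an explicit constant absorbed by the slack). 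This contradiction shows alternative (a) holds: $2K_{\Xb}$ separates $s$-jets at every $x \in X$.

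Finally, the Seshadri bound. For any curve $C$ through $x \in X$, separation of $1$-jets (a fortiori $s$-jets) by $2K_{\Xb}$ combined with the standard Seshadri criterion gives $\varepsilon(2K_{\Xb},x) \ge s$, hence $\varepsilon(K_{\Xb},x) \ge s/2$ by homogeneity of the Seshadri constant under scaling the line bundle. I expect the main obstacle to be purely bookkeeping: matching the exponent $n$ appearing in the Ein--Lazarsfeld--Nakamaye bound against the exponent $m$ in Theorem \ref{LargeIntersection}, and verifying that the stated numerical threshold on $\sy$ (with its factor $20$ and the composite constant $(1+2n+n!)(n+s)$) really dominates all the elementary inequalities $e^{\sy/16} > 4\pi(n+s)$, $e^{\sy/16} > \tfrac{4\pi}{n}\bi((1+2n+n!)(n+s)\bi)^n$, and the depth bound $\sy \ge 4\ln\bi(5n+(8\pi)^4\bi)$ simultaneously; the geometric content (ampleness of $K_{\Xb}$, global generation of $2K_{\Xb}$, the degree lower bound) is all already in place.
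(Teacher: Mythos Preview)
Your proposal is correct and follows essentially the same approach as the paper: apply Theorem \ref{Laz} with $L=K_{\Xb}$ and $m_0=2$ (from Theorem \ref{Freeness}), then rule out alternative (b) using the degree lower bound of Theorem \ref{LargeIntersection}, and deduce the Seshadri bound by homogeneity. The only cosmetic differences are that the paper takes $b=1$ rather than your $b=0$ (either works, since $(1+b)K_{\Xb}$ is nef for both), and that the boundary lemmas you list at the outset are not actually invoked---nor need they be, since the statement concerns only points $x\in X$.
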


\begin{proof}
Since $\sy >20 \ln\bi(5n+(8\pi)^4\bi),$
Theorem \ref{Freeness} implies that $2K_{\Xb}$ is globally generated.
Also, as $\sy \ge 20 \ln(n+s),$ Theorem \ref{LargeIntersection}
implies that 
\begin{align*}
    K_{\Xb}^n 
    >
    (n+s)^n.
\end{align*}
Note that substituting the lower bounds on $\sy$ from Theorem \ref{LargeIntersection} shows that for a subvariety $V$ of dimension $m$ not contained in $D$, the following inequality holds:
$$K_{\Xb}^m \cdot V 
     \ge 
     n^m (1+2n+n!)^n (n+s)^n
     \ge
     (b+2 m+\frac{n!}{(n-m)!})^{n-m} (n+s)^n. 
     $$    
Now, applying Theorem \ref{Laz} to $L=K_{\Xb}, m_0=2$ and $b=1$ gives that $2K_{\Xb}$ separates $s$-jets at every $x\in X 
 .$

 Combining the separation of jets with \cite[Proposition 2.2.5 ]{bauer2009primer} gives that 
    $
    \epsilon(2K_{\Xb},x)
    \ge
    s.
    $
    Since $\epsilon(2K_{\Xb},x)=2\epsilon(K_{\Xb},x),$ we get the desired inequality. 
\end{proof}
    
We recall a result of Kollar which tells us that a line bundle can separate two points if the degree of every subvariety passing through either of the points with respect to the line bundle is sufficiently large relative to the dimension of the ambient space:   
\begin{theorem}\label{Kollar} (\cite[Theorem 5.9]{kollar1997singularities})
    Let $L$ be a nef and big divisor on a smooth projective variety $Y$. Let
$x_1, x_2$ be closed points and assume that there are positive numbers $c(k)$ with
the following properties:
\begin{enumerate}
    \item If $V\subset Y$ is an irreducible $m$-dimensional subvariety which contains $x_1$ or $x_2$ then
$$L^m\cdot V > c(m)^m.$$
\item The numbers $c(k)$ satisfy the inequality
$$
\dis \sum_{k=1}^{\operatorname{dim}(x)} \sqrt[k]{2}\frac{k}{c(k)}
\le
1.
$$
\end{enumerate}
Then, $K_Y+L$ separates $x_1$ and $x_2.$ 
\end{theorem}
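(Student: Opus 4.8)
The plan is to prove this by the Angehrn--Siu method, in the streamlined multiplier-ideal form due to Kollár: one produces an effective $\Q$-divisor $\Delta$ with $\Delta \equiv_{\Q}(1-\epsilon)L$ for some $\epsilon>0$ whose multiplier ideal $\mathcal{J}(Y,\Delta)$ defines the reduced zero-dimensional scheme $\{x_1,x_2\}$, and then one applies Nadel vanishing. First I would fix a large integer $p$ and use that $L$ is nef and big together with asymptotic Riemann--Roch: since $h^0\bigl(Y,\mathcal{O}_Y(pL)\bigr)\sim p^n(L^n)/n!$, while requiring multiplicity $\mu$ at a smooth point imposes $\sim\mu^n/n!$ conditions, hypothesis~(1) in dimension $n$ (together with the $k=n$ term of hypothesis~(2)) lets me choose $\Delta_1\in|pL|$ with $\operatorname{mult}_{x_1}\Delta_1$ and $\operatorname{mult}_{x_2}\Delta_1$ both essentially $p\,c(n)/\sqrt[n]{2}$ --- the factor $\sqrt[n]{2}$ appearing because the conditions at $x_1$ and $x_2$ are imposed at the same time, which is exactly the source of the $\sqrt[k]{2}$ in hypothesis~(2).

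The heart of the argument, and the step I expect to be the main obstacle, is the inductive tie-breaking. After rescaling so that $x_1$ lies on the non-klt locus of $\bigl(Y,\tfrac1p\Delta_1\bigr)$, let $V_1\ni x_1$ be a minimal non-klt center; if $m_1:=\dim V_1>0$, I would run the same dimension count for $L|_{V_1}$, which is big on $V_1$ because $L$ is nef with $(L|_{V_1})^{m_1}=L^{m_1}\cdot V_1>0$, using $L^{m_1}\cdot V_1>c(m_1)^{m_1}$ to produce a divisor on $V_1$ that is highly singular at $x_1$, lift it to $Y$, and perturb to force $x_1$ onto a non-klt center of strictly smaller dimension. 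Iterating brings the center down to the reduced point $\{x_1\}$, and symmetrically to $\{x_2\}$. The delicate part is the bookkeeping: each descent through a subvariety of dimension $k$ consumes a fraction $\sqrt[k]{2}\,k/c(k)$ of the positivity of $L$, and the inequality $\sum_{k=1}^{\dim Y}\sqrt[k]{2}\,k/c(k)\le 1$ is precisely what ensures that the boundary divisor accumulated over dimensions $1,\dots,\dim Y$ stays $\Q$-linearly equivalent to $(1-\epsilon)L$ while keeping $x_1$ and $x_2$ isolated in the non-klt locus. Making the successive restrictions and lifts rigorous needs resolution of singularities and the standard restriction and subadditivity properties of multiplier ideals, together with care that the local constructions at $x_1$ and $x_2$ assemble into a single global $\Delta$ with $\mathcal{J}(Y,\Delta)$ equal to the ideal of $\{x_1,x_2\}$ near those points and trivial elsewhere.

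Finally I would conclude by Nadel vanishing. Since $L-\Delta\equiv_{\Q}\epsilon L$ is nef and big, the Kawamata--Viehweg form of Nadel vanishing gives $H^1\bigl(Y,\mathcal{O}_Y(K_Y+L)\otimes\mathcal{J}(Y,\Delta)\bigr)=0$, so the restriction
\[
H^0\bigl(Y,\mathcal{O}_Y(K_Y+L)\bigr)\longrightarrow H^0\bigl(Y,\mathcal{O}_Y(K_Y+L)\otimes\mathcal{O}_Y/\mathcal{J}(Y,\Delta)\bigr)=\kappa(x_1)\oplus\kappa(x_2)
\]
is surjective, which is exactly the statement that $K_Y+L$ separates $x_1$ and $x_2$. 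The one point needing attention is that $L$ is only nef and big rather than ample, but this is harmless: the vanishing theorem above holds in that generality, and if one prefers one may write $L\equiv_{\Q}A+E$ with $A$ ample and $E$ effective and absorb the extra slack into the construction of $\Delta$.
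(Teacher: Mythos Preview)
The paper does not prove this statement at all: it is quoted verbatim as a black-box input from Koll\'ar \cite[Theorem~5.9]{kollar1997singularities} and then applied in the proof of Theorem~\ref{Very Ample bicanonical}. So there is no ``paper's own proof'' to compare against.

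That said, your sketch is a faithful outline of Koll\'ar's original argument (which in turn refines the Angehrn--Siu method): produce a highly singular divisor at the two points using the top-dimensional hypothesis, then inductively cut down the non-klt centers at each point using the intersection-number hypotheses in lower dimensions, keeping track that the total ``cost'' of the construction is governed by $\sum_k \sqrt[k]{2}\,k/c(k)\le 1$, and conclude by Nadel/Kawamata--Viehweg vanishing. The one place to be careful is the step where you lift a singular divisor from a non-klt center $V_1$ back to $Y$: $V_1$ need not be smooth, so one cannot directly invoke multiplier-ideal machinery on $V_1$; Koll\'ar handles this via an Ohsawa--Takegoshi-type extension argument (or equivalently a careful use of inversion of adjunction), and this is where the factor $k$ in $k/c(k)$, rather than just $1/c(k)$, enters. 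Your phrase ``lift it to $Y$, and perturb'' hides this genuine technical point.
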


 \begin{definition}(\cite{takayama1993ample})
     Let $L$ be a line bundle on a smooth projective variety $Y$ and let $D$ be a divisor on $Y.$ The line bundle $L$ is said to be very ample modulo $D$ if the rational map $\Phi_{L}: Y \dashrightarrow \Proj(H^0(Y, O_{Y}(L)) $ is an embedding of $Y\setminus D.$
 \end{definition}
 Note that Theorem \ref{Freeness} says that the rational map 
 $\Phi_{2K_{\Xb}}: Y \dashrightarrow \Proj(H^0(Y, O_{Y}(2K_{\Xb}))$ is globally defined map on $Y.$ Moreover, the following theorem gives that this map is in particular injective on $X$ and can separate any two tangent directions at whole $\Xb:$
 \begin{theorem} \label{Very Ample bicanonical}
     Suppose that
     $$\sy 
\ge 
20\operatorname{max}\{n\ln\bi((1+2n+n!)(n+1)\bi), \ln\bi(5 n+(8\pi)^4\bi) \}.$$
     Then the map 
     $\Phi_{2K_{\Xb}}: \Xb \to \Proj\bi(H^0(\Xb,2K_{\Xb}) \bi)$
     satisfies the following properties:
     \begin{enumerate}

         \item If $\phi_{2K_{\Xb}}(x_1)=\phi_{2K_{\Xb}}(x_2)$  for some $x_1,x_2\in \Xb,$  then $x_1,x_2\in D_i,$ where $D_i$ is some connected component of $D.$ 
         \item  $\Phi_{2K_{\Xb}}$ separates tangent directions at every $x\in X.$
     \end{enumerate}

 \end{theorem}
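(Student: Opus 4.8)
The plan is to prove the two assertions separately, using the separation-of-jets statement (Theorem \ref{Seperation of Jets} with $s=1$) to handle what happens inside $X$, and the boundary analysis of the previous subsection to handle points on $D$. For the first assertion, suppose $\Phi_{2K_{\Xb}}(x_1)=\Phi_{2K_{\Xb}}(x_2)$ with $x_1\neq x_2$. If both $x_1,x_2\in X$, then since $2K_{\Xb}$ separates $1$-jets at every point of $X$ (Theorem \ref{Seperation of Jets}, applicable because the hypothesis on $\sy$ here with $s=1$ is exactly the one required there), $2K_{\Xb}$ in particular separates the two distinct points $x_1,x_2$, a contradiction — so at least one of them lies on $D$. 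If $x_1\in X$ and $x_2\in D$, I would invoke Proposition \ref{Freeness of 2k-D}: there is $s\in H^0(\Xb,2K_{\Xb}-D)$ not vanishing at $x_1$; viewing $s$ as a section of $2K_{\Xb}$ that vanishes identically on $D$ (hence at $x_2$) but not at $x_1$, this separates $x_1$ from $x_2$, again a contradiction. So both $x_1,x_2\in D$. Finally, if $x_1\in D_i$ and $x_2\in D_j$ with $i\neq j$, Lemma \ref{bndry}(2) produces a global section of $2K_{\Xb}$ separating them. Hence $x_1,x_2$ lie in a single component $D_i$.

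For the second assertion, fix $x\in X$ and suppose $\Phi_{2K_{\Xb}}$ does not separate some tangent direction at $x$; that means every section of $2K_{\Xb}$ vanishing at $x$ also vanishes to order $\ge 2$ there, i.e. $2K_{\Xb}$ does not separate $1$-jets at $x$. This contradicts Theorem \ref{Seperation of Jets} with $s=1$, which applies since $\sy \ge 20\,n\ln\bi((1+2n+n!)(n+1)\bi)\ge 20\ln\bi((1+2n+n!)(n+1)\bi)$ and $\sy\ge 20\ln\bi(5n+(8\pi)^4\bi)$, matching its hypotheses. Therefore $\Phi_{2K_{\Xb}}$ separates tangent directions at every $x\in X$.

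I would organize the writeup as: first note that the bound on $\sy$ implies the hypotheses of Theorem \ref{Freeness} (so $\Phi_{2K_{\Xb}}$ is a morphism), of Theorem \ref{Seperation of Jets} with $s=1$, of Proposition \ref{Freeness of 2k-D}, and of Lemma \ref{bndry} (the uniform depth exceeds $4\pi$ by \eqref{depth Bounds 2}); then run the four-case dichotomy for (1) and the one-line argument for (2). The only genuinely delicate point is bookkeeping the numerical hypotheses — checking that the single inequality assumed here simultaneously dominates all the thresholds invoked ($20\ln(5n+(8\pi)^4)$ for Freeness/Proposition \ref{Freeness of 2k-D}, $20\,n\ln((1+2n+n!)(n+1))$ for separation of $1$-jets, and $4\pi$ for the depth). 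Everything else is a direct assembly of the cited results, so no essential obstacle remains beyond this routine verification.
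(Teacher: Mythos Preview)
Your argument has a genuine gap in the case $x_1,x_2\in X$. Separating $1$-jets at a point $x$ is a purely local statement: it means the restriction map $H^0(\Xb,2K_{\Xb})\to H^0(\Xb,2K_{\Xb}\otimes\mathcal{O}_{\Xb}/\mathfrak{m}_x^{2})$ is surjective, i.e.\ the differential of $\Phi_{2K_{\Xb}}$ at $x$ is injective. Knowing this for every $x\in X$ tells you that $\Phi_{2K_{\Xb}}$ is an immersion on $X$, but it does \emph{not} tell you that $\Phi_{2K_{\Xb}}$ is injective on $X$ --- an immersion need not separate distinct points. So the sentence ``$2K_{\Xb}$ in particular separates the two distinct points $x_1,x_2$'' does not follow from Theorem~\ref{Seperation of Jets} with $s=1$.

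The paper handles this case differently: it invokes Koll\'ar's point-separation criterion (Theorem~\ref{Kollar}), feeding in the uniform degree lower bound $K_{\Xb}^m\cdot V\ge c^m$ with $c=ne^{\sy/20}$ from Theorem~\ref{LargeIntersection} for every subvariety $V$ through $x_1$ or $x_2$. That criterion is designed precisely to conclude that $K_{\Xb}+L$ (here $L=K_{\Xb}$) separates two given points, which is what is needed. The remaining cases of your dichotomy (one point in $X$ and one in $D$ via Proposition~\ref{Freeness of 2k-D}; both in $D$ via Lemma~\ref{bndry}(2); tangent directions via Theorem~\ref{Seperation of Jets} with $s=1$) match the paper and are fine.
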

 \begin{proof}
     Separation of points: Note that by Lemma \ref{bndry} if $\phi_{2K_{\Xb}}(x_1)=\phi_{2K_{\Xb}}(x_2)$ and $x_1,x_2\in D,$ then they both lie on the same component of $D.$ Hence, we only need to deal with the following two cases:
     \begin{enumerate}
         \item $x_1,x_2 \in X:$ Let $V\subset \Xb$ be a subvariety of dimension $m$ which passes through either $x_1$ or $x_2.$ Fix $c=ne^{\sy/20}.$ By Theorem \ref{LargeIntersection} we have that
     \begin{align*}
         K_{\Xb}^m \cdot V
         \ge
         c^m. 
     \end{align*}
     Therefore, by Kollar's Theorem, Theorem \ref{Kollar}, we can separate any two points $x_1,x_2 \in X.$  

     \item $x_1\in X, x_2\in D:$ By Proposition \ref{Freeness of 2k-D}, there is a section $s\in H^0(\Xb, 2K_{\Xb}-D)$ which does not vanish at $x_1.$  Therefore, as $2K_{\Xb}-D$ is a subbundle of $2K_{\Xb},$ we get a section of $2K_{\Xb}$ which does not vanish at $x_1,$ but vanishes on $D$ and in particular at $x_2.$   
        \end{enumerate}

     Separation of tangent directions: For $x\in X,$ the separation of tangent direction follows from Theorem \ref{Seperation of Jets} when $s=1.$
     
 \end{proof}
In particular, Theorem \ref{Very Ample bicanonical} implies that $2K_{\Xb}$ is very ample modulo $D.$

\begin{theorem}\label{Very amplness of 3k}
With the same assumption on $\sy$ as Theorem \ref{Very Ample bicanonical}, $3K_{\Xb}$ is very ample.
 \end{theorem}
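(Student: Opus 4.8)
The plan is to deduce very ampleness of $3K_{\Xb}$ from what has already been proven for $2K_{\Xb}$ together with the results on the boundary $D$, by the standard strategy: write $3K_{\Xb} = 2K_{\Xb} + K_{\Xb}$, use that $2K_{\Xb}$ is already very ample modulo $D$ (Theorem \ref{Very Ample bicanonical}) and globally generated (Theorem \ref{Freeness}), and handle separation of points and tangent directions in the three regimes — two points in $X$, one point in $X$ and one in $D$, and both points in $D$ — separately. First I would recall that under the hypothesis on $\sy$ the uniform depth of cusps exceeds $4\pi$ by \eqref{depth Bounds 2}, so $K_{\Xb}$ is ample by Theorem \ref{Tsi}; this makes all the Kodaira-vanishing-based lifting arguments available.

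For separation of points and tangents \emph{on} $X$: since $2K_{\Xb}$ already separates points of $X$ and separates tangent directions at every $x\in X$ (Theorem \ref{Very Ample bicanonical}), and $3K_{\Xb} = 2K_{\Xb} + K_{\Xb}$ with $K_{\Xb}$ globally generated (indeed ample), the sections of $3K_{\Xb}$ a fortiori do the same — one tensors a separating section of $2K_{\Xb}$ with a nonzero section of $K_{\Xb}$. For a point $x\in X$ versus a point $y\in D$: by Proposition \ref{Freeness of 2k-D} there is a section of $2K_{\Xb} - D$ not vanishing at $x$; multiplying by any section of $K_{\Xb} + D$ (which is ample when $d>4\pi$, again by Theorem \ref{Tsi}) that is nonzero at $x$ gives a section of $3K_{\Xb}$ separating $x$ from all of $D$, hence from $y$. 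The genuinely new work is \textbf{on the boundary} $D$: here I would invoke Lemma \ref{Very Ampleness on Boundray}, which already gives that $3K_{\Xb}$ separates any two points and any tangent direction on each connected component $D_i$, provided we can lift sections from $D$ to $\Xb$, i.e. provided $H^1(\Xb, 3K_{\Xb} - D) = 0$; since $3K_{\Xb} - D = K_{\Xb} + (2K_{\Xb} - D)$ and $2K_{\Xb} - D$ is ample when $d > 2\pi$ (Theorem \ref{Tsi}), Kodaira vanishing closes this. One also needs to rule out two points lying on \emph{different} components $D_i$, $D_j$ of $D$: this is exactly Lemma \ref{bndry}(2), which produces a section of $2K_{\Xb}$ separating them, and tensoring with a nonzero section of $K_{\Xb}$ promotes it to $3K_{\Xb}$.

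Assembling these pieces: I would organize the proof as a case analysis on a pair of (possibly infinitely near) points of $\Xb$, in each case exhibiting an explicit section of $3K_{\Xb}$ — obtained either directly from the boundary results via a vanishing statement, or by multiplying a section of a smaller twist ($2K_{\Xb}$, $2K_{\Xb}-D$, or $2K_{\Xb}-D_i$) that already separates the points by a suitable nonzero section of the complementary ample twist ($K_{\Xb}$, $K_{\Xb}+D$, or $K_{\Xb}+D_i$). The key inputs are: Theorem \ref{Tsi} (to get ampleness of $K_{\Xb}$, $K_{\Xb}-D$, $K_{\Xb}+D$, $2K_{\Xb}-D$ under the depth bound), Kodaira vanishing for the lifting exact sequences $0 \to L - D \to L \to L|_D \to 0$, Lemma \ref{Very Ampleness on Boundray} and Lemma \ref{bndry} for the boundary behavior, and Theorems \ref{Freeness}, \ref{Very Ample bicanonical}, and Proposition \ref{Freeness of 2k-D} for the interior and mixed cases.

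The step I expect to be the main obstacle is the bookkeeping needed to separate tangent directions at a boundary point $x \in D_i$ \emph{as a tangent direction of $\Xb$}, not merely as a tangent direction along $D_i$: Lemma \ref{Very Ampleness on Boundray} separates tangent directions within $D_i$, but one must additionally separate the normal direction to $D_i$, which requires producing a section of $3K_{\Xb}$ vanishing on $D_i$ to first order but not to second order — equivalently controlling the surjection $H^0(\Xb, 3K_{\Xb} - D_i) \to H^0(D_i, (3K_{\Xb} - D_i)|_{D_i})$ together with the fact that $(3K_{\Xb} - D_i)|_{D_i} = 4(-D_i|_{D_i})$ is globally generated (even very ample) on the abelian variety $D_i$, so it has a section nonzero at $x$; the lifting again reduces to a Kodaira-type vanishing $H^1(\Xb, 3K_{\Xb} - 2D_i) = 0$, which should follow from Theorem \ref{Tsi} since $D_i$ is a connected component of $D$ and one can argue componentwise (the relevant twist restricted to $D_i$ is ample and restricted to the other components is trivial, so Kodaira vanishing applies after splitting). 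Making this componentwise vanishing precise — and checking the hypothesis on $\sy$ is strong enough for all the ampleness statements invoked — is where the care is needed; everything else is a formal consequence of the already-established $2K_{\Xb}$ results.
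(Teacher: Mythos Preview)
Your case-by-case strategy mirrors the paper's, but the mechanism you use to pass from $2K_{\Xb}$ to $3K_{\Xb}$ has a real gap. In the interior and mixed cases you propose to tensor a separating section of $2K_{\Xb}$ (or $2K_{\Xb}-D$, $2K_{\Xb}-D_i$) by a section of the complementary bundle $K_{\Xb}$ (resp.\ $K_{\Xb}+D$, $K_{\Xb}+D_i$) that does not vanish at the relevant point, justifying existence by ``ampleness''. But ampleness does not imply global generation, and the paper only establishes that $2K_{\Xb}$ is free (Theorem~\ref{Freeness}); nothing shows $K_{\Xb}$ itself is base-point free. Worse, your claim that $K_{\Xb}+D$ is ample is false: by adjunction $(K_{\Xb}+D)|_{D_i}\cong K_{D_i}\cong\mathcal{O}_{D_i}$ is trivial on each abelian component, so $K_{\Xb}+D$ is only nef and big, and Theorem~\ref{Tsi} concerns $\lambda$ in the \emph{open} interval, not $\lambda=0$. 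So the tensoring shortcut does not go through as written.

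The paper's one-sentence proof avoids this by reading Theorem~\ref{Very Ample bicanonical} not as a black box to be tensored up but as a \emph{method}: every separation statement there was obtained from Angehrn--Siu, Koll\'ar, or Ein--Lazarsfeld--Nakamaye applied with $L=K_{\Xb}$, and substituting $L=2K_{\Xb}$ only strengthens the numerical hypotheses (since $(2K_{\Xb})^m\cdot V=2^m\,K_{\Xb}^m\cdot V$), yielding the same conclusions for $K_{\Xb}+L=3K_{\Xb}$ directly, with no need for $K_{\Xb}$ to be free. What then remains is separation of two points, or of a tangent direction, on a single component $D_i$, which the paper dispatches by Lemma~\ref{Very Ampleness on Boundray}. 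Your observation that lifting sections from $D_i$ only handles $T_xD_i\subset T_x\Xb$, so that the \emph{normal} direction at $x\in D_i$ requires a separate argument, is well taken --- the paper is indeed terse here --- and your proposed fix via the vanishing of $H^1(\Xb,3K_{\Xb}-2D_i)$ together with global generation of $(3K_{\Xb}-D_i)|_{D_i}\cong 4(-D_i|_{D_i})$ on the abelian variety $D_i$ is correct and compatible with the available depth bound.
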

\begin{proof}
    By Theorem \ref{Very Ample bicanonical}, it follows that we only need to show that $3K_{\Xb}$ can separate any two points and any tangent direction on any connected component of $D,$ which follows from Lemma \ref{Very Ampleness on Boundray}. 
\end{proof}
Putting all of these together, we get the following:
\begin{Corollary}\label{Effective 1}Suppose that
     $$\sy 
\ge 
20\operatorname{max}\{n\ln\bi((1+2n+n!)(n+1)\bi), \ln\bi(5 n+(8\pi)^4\bi) \}.$$ Then, the following hold \begin{enumerate} 
    \item $2K_{\Xb}$ is globally generated and very ample modulo $D.$
    \item $3K_{\Xb}$ is very ample.
\end{enumerate}
\end{Corollary}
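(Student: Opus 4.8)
The plan is to assemble the corollary directly from the theorems proved earlier in this section; no genuinely new argument is needed, only a careful check that the hypothesis on $\sy$ is strong enough to feed each ingredient. For part (1), the global generation of $2K_{\Xb}$ is exactly Theorem \ref{Freeness}, whose hypothesis is $\sy \ge 20\ln\bi(5n+(4\pi)^4\bi)$; since the bound assumed here dominates this (because $8\pi > 4\pi$ and $(1+2n+n!)(n+1)\ge 5n+1$ for the relevant ranges, so the stated maximum is at least $\ln\bi(5n+(8\pi)^4\bi)\ge \ln\bi(5n+(4\pi)^4\bi)$), the hypothesis of Theorem \ref{Freeness} is satisfied. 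The ``very ample modulo $D$'' half of (1) is precisely the content of Theorem \ref{Very Ample bicanonical} together with the remark following it: that theorem shows $\Phi_{2K_{\Xb}}$ is a globally defined morphism which is injective on $X$, separates points of $X$ from points of $D$, and separates tangent directions at every point of $X$, which is exactly the definition of very ample modulo $D$. Its hypothesis on $\sy$ is the very same one stated in the corollary, so there is nothing to check.

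For part (2), the very ampleness of $3K_{\Xb}$ is Theorem \ref{Very amplness of 3k}, again proved under the identical hypothesis on $\sy$. So the proof is simply: ``Part (1) follows from Theorem \ref{Freeness} (for global generation, after observing that the assumed lower bound on $\sy$ exceeds $20\ln\bi(5n+(4\pi)^4\bi)$) and from Theorem \ref{Very Ample bicanonical} together with the subsequent remark (for very ampleness modulo $D$). Part (2) is Theorem \ref{Very amplness of 3k}.''

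The only place requiring a line of justification is the monotonicity comparison of the numerical thresholds: one must confirm that
$$
20\operatorname{max}\{n\ln\bi((1+2n+n!)(n+1)\bi),\ \ln\bi(5n+(8\pi)^4\bi)\}
\ \ge\
20\ln\bi(5n+(4\pi)^4\bi),
$$
which is immediate since $(8\pi)^4 > (4\pi)^4$, so already the second term of the maximum dominates the right-hand side. This is the ``main obstacle'' only in the sense that it is the sole nontrivial bookkeeping step; conceptually there is no obstacle at all, as the corollary is a pure repackaging of Theorems \ref{Freeness}, \ref{Very Ample bicanonical} and \ref{Very amplness of 3k}, each of which has already been established under a hypothesis no stronger than the one assumed here.
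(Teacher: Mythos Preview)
Your proposal is correct and follows exactly the paper's own proof: it simply cites Theorem \ref{Freeness} for global generation, Theorem \ref{Very Ample bicanonical} (and the remark after it) for very ampleness modulo $D$, and Theorem \ref{Very amplness of 3k} for the very ampleness of $3K_{\Xb}$. The extra monotonicity check you include is fine but the paper does not even bother to spell it out.
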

\begin{proof}
The global generation of $2K_{\Xb}$ follows from 
    Theorem \ref{Freeness}. The very ampleness modulo $D$ follows from Theorem \ref{Very Ample bicanonical}. The very ampleness of $3K_{\Xb}$ follows from Theorem \ref{Very amplness of 3k}.
\end{proof}
\section{Seshadri Constant}

The goal of this section is to study the relation between the Seshadri constants and the systole of $X$ and in particular we prove Corollary \ref{full Very amplness Int} in this section.

In addition to the result of Theorem \ref{Seperation of Jets} on the Seshadri constant $\epsilon(2K_{\Xb},x)$ for $x\in X,$ we obtain the following result, which holds under a smaller bound on $\sy:$

\begin{Corollary}
    
\label{Seshadri cons locus}
Suppose that $\sy  \ge 20\ln\bi(5n+(8\pi)^4\bi).$ Let 
$$E:=\{x\in X| \epsilon(K_{\Xb},x)<e^{\sy/20} \}.$$
Then, $E$ satisfies the following properties:
\begin{enumerate}
    \item $E\cap X_{\mathrm{thick}}=\varnothing.$
    \item $E$ does not contain any positive-dimensional subvariety.
    \item $E$ is contained in a Zariski closed proper subset of $X.$
     
\end{enumerate}

\end{Corollary}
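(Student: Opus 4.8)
The plan is to relate the Seshadri constant $\epsilon(K_{\Xb},x)$ at a point $x\in X$ to the systole via the volume (intersection) bounds already established, and then transfer the resulting lower bound to an upper bound on the size of the exceptional locus $E$. Recall the general principle (essentially \cite{ein1996zero} or the submultiplicativity of Seshadri constants): if $\epsilon(K_{\Xb},x)<c$, then there is an irreducible subvariety $V\ni x$ of some dimension $m>0$ with $K_{\Xb}^m\cdot V\le c^m\cdot \operatorname{mult}_x(V)$, or more precisely $K_{\Xb}^m\cdot V< c^m$ when one works with the Seshadri-exceptional subvariety and takes $c$ slightly larger than $\epsilon$. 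So my first step is to fix $c=e^{\sy/20}$ and observe: for $x\in E$, there exists a positive-dimensional irreducible $V\subset \Xb$ through $x$ with $K_{\Xb}^{m}\cdot V< c^{m}$ where $m=\dim V$.

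Second, I would split on whether $V\subset D$ or not. If $V\not\subset D$, then by Theorem \ref{LargeIntersection} (applicable since $\sy\ge 20\ln(5n+(8\pi)^4)\ge 4\ln(5n+(4\pi)^4)$) we have
\begin{align*}
K_{\Xb}^m\cdot V> \Bi(\frac{n}{4\pi}\Bi)^m e^{m\sy/16}> e^{m\sy/20}=c^m,
\end{align*}
a contradiction, provided $(\tfrac{n}{4\pi})^m e^{m\sy/16}\ge e^{m\sy/20}$, i.e. $e^{m\sy(1/16-1/20)}\ge (4\pi/n)^m$, which holds comfortably under the stated bound on $\sy$ (here $1/16-1/20=1/80$, so one needs $\sy/80\ge \ln(4\pi)$, and $20\ln(5n+(8\pi)^4)$ is far larger). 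If $V\subset D$, then since $\sy\ge 20\ln(5n+(8\pi)^4)$ forces the uniform depth of cusps above $4\pi$ (by \eqref{depth Bounds 2}), Theorem \ref{Tsi} gives that $K_{\Xb}$ is ample, hence $K_{\Xb|D}=-D_{|D}$ is ample; but $x\in E\subset X$ and $V\ni x$ cannot lie in $D$, so this case is vacuous. This already proves (2): $E$ contains no positive-dimensional subvariety. Consequently also (1): by Theorem \ref{thick} every subvariety meeting $X$ meets $X_{thick}$ — but the argument I actually want here is different; rather, for $x\in X_{thick}$ one has $\operatorname{inj}_x(X)\ge \sy/2$, and by Theorem \ref{Hwanginequilty} together with the curvature-comparison bound $\epsilon(K_{\Xb},x)$ is bounded below by something of order $\sinh^2(\sy/4)$ times a constant, which exceeds $e^{\sy/20}$ under the hypothesis; hence $E\cap X_{thick}=\varnothing$. (Alternatively, (1) can be read off directly from the Seshadri-volume estimate on Bergman balls in the thick part.)

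Third, for (3) I would argue that $E$ is contained in a Zariski-closed proper subset. The standard fact is that the locus where $\epsilon(K_{\Xb},\cdot)$ drops below a fixed value $c$ is contained in the union of the (countably many) Seshadri-exceptional subvarieties; but since by (2) none of these is positive-dimensional over points of $E$, and since $K_{\Xb}$ is ample so that $\epsilon(K_{\Xb},x)$ is positive and lower semicontinuous in a suitable sense (it is lower semicontinuous on a Zariski-open set, by \cite{ein1996zero}), the set $E$ where it is $<c$ is contained in a proper closed subset. Concretely: there is a nonempty Zariski-open $U\subset\Xb$ on which $\epsilon(K_{\Xb},x)$ attains its maximal value $\epsilon_{\max}$; if $\epsilon_{\max}\ge c$ then $E\subset \Xb\setminus U$, a proper closed subset, and we are done; if $\epsilon_{\max}<c$ then by the very same dichotomy applied at a general point we would get a positive-dimensional $V\not\subset D$ with $K_{\Xb}^{\dim V}\cdot V<c^{\dim V}$, contradicting Theorem \ref{LargeIntersection} exactly as above. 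Hence $\epsilon_{\max}\ge c$ and $E\subset\Xb\setminus U$.

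The main obstacle I anticipate is making the first step — "$\epsilon(K_{\Xb},x)<c$ produces an irreducible $V\ni x$ with $K_{\Xb}^{\dim V}\cdot V< c^{\dim V}$" — fully precise with the correct constants; this is the Seshadri-exceptional-subvariety argument and one must be a little careful about whether the inequality is strict, whether $\operatorname{mult}_x(V)$ enters, and how the "slightly larger than $\epsilon$" fudge interacts with the exponential gap between $e^{\sy/16}$ and $e^{\sy/20}$. Fortunately the gap is generous (the exponents $1/16$ versus $1/20$, and the polynomial-in-$n$ slack in Theorem \ref{LargeIntersection}), so all the constant-chasing should go through without straining the hypothesis $\sy\ge 20\ln(5n+(8\pi)^4)$; the lower-semicontinuity input for part (3) is the other point to cite carefully from \cite{ein1996zero}.
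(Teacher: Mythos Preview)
Your alternative arguments for (1) and (3) are essentially the paper's: for (1) the paper uses Hwang--To on the thick part exactly as you sketch (with the ampleness of $K_{\Xb}-D$ to pass from $(K_{\Xb}+D)\cdot C$ to $2K_{\Xb}\cdot C$, and with $\sinh^2(\sy/2)$ rather than $\sinh^2(\sy/4)$); for (3) the paper cites the Ein--Lazarsfeld result that the degree bound of Theorem~\ref{LargeIntersection} forces $\epsilon(K_{\Xb},x)\ge \frac{1}{4\pi}e^{\sy/16}$ off countably many proper subvarieties, and then upgrades this to a Zariski-closed complement using ampleness of $K_{\Xb}$, which is your ``$\epsilon_{\max}\ge c$ on a nonempty open $U$'' argument in slightly different dress.

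The genuine gap is in your primary route to (2). The statement ``$\epsilon(K_{\Xb},x)<c$ produces an irreducible $V\ni x$ with $K_{\Xb}^{\dim V}\cdot V<c^{\dim V}$'' is false as written: what you actually get is a curve $C\ni x$ with $K_{\Xb}\cdot C<c\cdot\operatorname{mult}_x(C)$, and $\operatorname{mult}_x(C)$ is \emph{unbounded}. No slack between $e^{\sy/16}$ and $e^{\sy/20}$ absorbs an unbounded multiplicity, so Theorem~\ref{LargeIntersection} gives no contradiction and your argument, if valid, would have shown $E=\varnothing$ outright---which the paper does not claim. You flag the multiplicity issue yourself but then dismiss it as ``constant-chasing''; it is not.

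The paper's fix is the route you mention and then set aside: prove (1) first, so that $E\subset X\setminus X_{thick}$, and then invoke Theorem~\ref{thick}, which says every positive-dimensional subvariety of $\Xb$ not contained in $D$ meets $X_{thick}$. Any positive-dimensional $W\subset E$ would therefore meet $X_{thick}$, contradicting (1). That is the whole argument for (2); your Seshadri-exceptional detour is neither needed nor correct here.
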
 
\begin{proof}
\begin{enumerate}
    \item Fix $x\in X_{\mathrm{thick}}.$ Let $C\subset \Xb$ be a curve passing through $x.$ Since $x\in X_{\mathrm{thick}}$ we have $\operatorname{inj}_{x}(X)\ge \sy/2.$
On the other hand, since $\sy  \ge 20\ln\bi(5n+(8\pi)^4\bi)$, Corollary \ref{depth Bounds 2} gives that $d> 8\pi,$ therefore by Theorem \ref{Tsi} $K_{\Xb}-D$ is ample. We can write: 
\begin{align*}
    2K_{\Xb} \cdot C
    &\ge  
    (K_{\Xb}+D)\cdot C \mbox{\ (by ampleness of $K_{\Xb}-D$)\ }\\
    &\ge 
    \frac{n+1}{4\pi } \operatorname{vol}_{X}(C) \mbox{\ (by \eqref{ChernForm})}\\
    &\ge 
       (n+1)\sinh^{2}\bi(\sy /2\bi) \cdot \operatorname{mult}_{x}(C) \mbox{\ (by Theorem \ref{Hwanginequilty})}. 
\end{align*}
Therefore, 
$$
\epsilon(x,K_{\Xb}) 
\ge
\frac{n+1}{2 }\sinh^{2}\bi(\sy /2\bi)
>
e^{\sy/20},
$$
and this gives the first property.

\item Combining (i) with Theorem \ref{thick} we conclude that $E$ does not have any positive-dimensional subvariety.  

\item Note that Theorem \ref{LargeIntersection} implies that for every $m$-dimensional subvariety $V\not \subset D,$ we have 
$$
(K_{\Xb}^m\cdot V)^{\frac{1}{m}}
\ge 
 \frac{n+1}{4\pi } e^{\sy/16}.
$$
Putting this in \cite[Theorem 3.1]{ein1995local} gives
\begin{align} \label{seshadri bound}
 \epsilon(K_{\Xb},x)
\ge 
 \frac{1}{4\pi } e^{\sy/16}
  >
e^{\sy/20}.
\end{align}
for all $x\in \Xb$ off the union of countably many proper subvarieties
of $\Xb.$ On the other hand as $K_{\Xb}$ is ample by using \cite[Lemma 1.4]{ein1995local} we can conclude that inequality \eqref{seshadri bound} holds on Zarisiky open set, i.e, $E$ is contained in a proper subvariety
of $\Xb.$    

\end{enumerate}
\end{proof}

Consider the decomposition of the boundary divisor $D$ to the connected components $D=\sqcup_{i=1}^{k}D_i.$ Due to \cite{MOK}, we know that each $D_{i}$ is an abelian variety with ample conormal bundle $O_{D_i}(-D_i).$ The adjunction formula gives that $K_{\Xb|D_i}$ is isomorphic to the conormal bundle $O_{D_i}(-D_i).$ Suppose that $D_i=\Lambda_i \backslash W_i,$ where $W_{i}\cong \C^{n-1}$ is a complex vector space of dimension $n-1,$ and $\Lambda_i\cong \Z^{n-1}$ is a lattice in $W_{i}.$  It is classical that every ample line bundle on $D_i$ determines a positive definite Hermitian form on $W_{i}.$ 
 Suppose $H_{i}$ is the positive definite Hermitian form determined by $K_{\Xb|D_i}$ on $W_{i}.$ The real part $B_{i}=\operatorname{Re}(H_i)$
defines a Euclidean inner product on $W_i$ (see \cite[sec 5.3.A]{lazarsfeld1} for more details). Let $l_i$ be the length of a shortest vector of $\Lambda_i$ with respect to $B_{i}.$ We define the systole of the boundary as 
$$
\operatorname{sys}(D):=\operatorname{min}_{i=1}^{k} l_{i}.
$$

The following lemma gives a lower bound for the Seshadri constant of $K_{\Xb|D}$ in terms of the systole of the boundary:

\begin{Lemma} \label{SeshOnD} Let $x $ be a point on a connected component of the boundary, $ D_i.$ Then,
$$\epsilon(K_{\Xb|D_i},x) \ge \frac{\pi}{4} \cdot \operatorname{sys}(D)^{2}.$$
    
\end{Lemma}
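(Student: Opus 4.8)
The plan is to reduce the statement to a well-known computation of the Seshadri constant of an ample line bundle on an abelian variety in terms of the associated flat metric. The key point is that $D_i = \Lambda_i \backslash W_i$ is an abelian variety, $K_{\Xb|D_i} \cong O_{D_i}(-D_i)$ is ample, and the positive definite Hermitian form $H_i$ it determines (with real part the Euclidean inner product $B_i$ on $W_i\cong\C^{n-1}$) is precisely the geometric data governing its Seshadri constant. So first I would recall the general principle: for a polarization on an abelian variety $A=\Lambda\backslash W$ with associated flat metric, the Seshadri constant at any point is computed by minimizing the degree-to-multiplicity ratio over curves, and curves through a point on an abelian variety can be translated so that the estimate becomes uniform; in particular $\epsilon(L,x)$ does not depend on $x$ and is bounded below by a multiple of the squared length of the shortest period vector.

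\textbf{Key steps.} First I would observe that, since $D_i$ is an abelian variety, $\epsilon(K_{\Xb|D_i},x)$ is independent of the point $x\in D_i$ (translation by elements of $D_i$ is an automorphism preserving the polarization), so it suffices to bound $\epsilon(K_{\Xb|D_i},0)$. Second, I would estimate the intersection number $K_{\Xb|D_i}\cdot C$ from below for any curve $C\subset D_i$: passing to $W_i$, the pullback of the curvature form of the metric coming from $H_i$ is (a multiple of) the standard Kähler form attached to $B_i$, and integrating this over $C$ (equivalently, over a fundamental-domain-sized piece of the lift) produces a term proportional to the area of $C$ in the flat metric; this area is in turn bounded below by $\operatorname{mult}_0(C)$ times (essentially) the area of the smallest flat disk that can be embedded, which is $\pi \cdot (\operatorname{sys}(D)/2)^2$-type in the squared systole because any nonconstant holomorphic curve through a period-class direction must "wrap around" a loop of length at least $l_i$. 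Concretely, the monotonicity formula for minimal (complex) subvarieties in a Euclidean space, applied to the lift of $C$ to $W_i$, gives $\operatorname{area}(C) \ge \pi r^2 \operatorname{mult}_0(C)$ for $r$ up to the injectivity radius $l_i/2$ of the flat torus, and combining with the normalization $c_1(K_{\Xb|D_i}) = [\omega_{B_i}]/\pi$ (or whatever constant falls out of the convention linking $H_i$ to $B_i$) yields $K_{\Xb|D_i}\cdot C \ge \tfrac{\pi}{4}\operatorname{sys}(D)^2 \operatorname{mult}_0(C)$. Dividing by $\operatorname{mult}_0(C)$ and taking the infimum over $C$ gives the claimed bound.

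\textbf{Main obstacle.} The technical heart is the lower bound $\operatorname{area}_{B_i}(C) \ge \tfrac14\operatorname{sys}(D)^2 \operatorname{mult}_0(C)$ for an arbitrary (possibly singular, possibly with complicated winding) algebraic curve $C$ on the flat torus $D_i$, passing through $0$ with multiplicity $\operatorname{mult}_0(C)$. The clean way is the monotonically-increasing-density argument: lift $C$ near $0$ to $W_i$, where it is a complex-analytic curve, and apply the monotonicity formula for the mass ratio of a positive-dimensional analytic set, which is valid as long as the Euclidean ball of radius $r$ about the lift of $0$ embeds into the torus, i.e. for $r < l_i/2$. One must be a little careful that the constant matches the normalization $c_1(K_{\Xb|D_i})$ relative to $B_i$ coming from the paper's conventions (the factor of $\pi$ and the factor of $4$ in the statement come from this bookkeeping), and that branches of $C$ through $0$ are correctly accounted for by $\operatorname{mult}_0(C)$; I expect the matching of constants to be the only genuinely delicate part, the rest being a standard application of Wirtinger's inequality and the monotonicity formula.
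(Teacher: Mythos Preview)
Your proposal is correct and amounts to unpacking the paper's one-line proof, which simply cites \cite[Theorem 5.3.6]{lazarsfeld1}: that theorem is exactly the bound $\epsilon(L,x)\ge \tfrac{\pi}{4}l^2$ for an ample line bundle on an abelian variety with shortest period $l$, and its standard proof is precisely the Lelong/monotonicity argument on the universal cover that you sketch. So you have not taken a different route but rather reproduced the argument behind the citation, including the correct constant.
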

\begin{proof}
    This follows from \cite[Theorem 5.3.6]{lazarsfeld1}.
\end{proof}

Combining this lemma with the previous results gives that if the systole of $\Xb$ and $\operatorname{sys}(D)$ are sufficiently large, then the Seshadri constant $\epsilon(K_{\Xb},x)$ is large and in particular $2K_{\Xb}$ is very ample:

\begin{Corollary}\label{full Very amplness}Suppose that $\operatorname{sys}(D)>2\sqrt{2n/\pi}$ and that $$\sy 
\ge 
20\operatorname{max}\{n\ln\bi(5n(1+2n+n!)\bi), \ln\bi(5 n+(8\pi)^4\bi) \}.$$  
 Then, for every $x\in \Xb$ we have
 $
    \epsilon(K_{\Xb},x)
    \ge 
   2n,
 $
      and in particular $2K_{\Xb}$ is very ample.  
\end{Corollary}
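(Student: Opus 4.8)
The plan is to combine a Seshadri constant estimate on the interior $X$ with one on the boundary divisor $D$, and then to use the criterion that a line bundle with Seshadri constant at least $2\dim$ at every point is very ample. First I would bound $\epsilon(K_{\Xb},x)$ for $x\in X$. The hypothesis on $\sy$ contains the bound $\sy\ge 20\ln\bi(5n+(8\pi)^4\bi)$, so Corollary \ref{Seshadri cons locus} applies and (tracking its proof, in particular inequality \ref{seshadri bound}) gives $\epsilon(K_{\Xb},x)\ge \frac{1}{4\pi}e^{\sy/16}$ for $x$ off a proper Zariski-closed subset, while on the thick part one even has $\epsilon(K_{\Xb},x)\ge \frac{n+1}{8\pi}\sinh^2(\sy/2)$. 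Using $\sy\ge 20 n\ln\bi(5n(1+2n+n!)\bi)$ one checks $\frac{1}{4\pi}e^{\sy/16}\ge 2n$, so in fact $\epsilon(K_{\Xb},x)\ge 2n$ for every $x\in X$ once one upgrades the countable-union statement to a Zariski-open statement via \cite[Lemma 1.4]{ein1995local}, exactly as in the third part of Corollary \ref{Seshadri cons locus}. Actually, to get the bound \emph{everywhere} on $X$ (not just off a proper subset) I would argue pointwise: for any curve $C\subset\Xb$ through $x\in X$, either $C\cap X_{thick}\ne\varnothing$ and the thick-part estimate applies, or — more carefully — one uses Theorem \ref{LargeIntersection} on the subvarieties through $x$ together with Kollar-type local positivity; this is the same mechanism used to prove Theorem \ref{Seperation of Jets} with $s=2n$, which already gives $\epsilon(2K_{\Xb},x)\ge 2n$, i.e. $\epsilon(K_{\Xb},x)\ge n$ — so I would instead take $s=4n$ (or adjust constants) so that Theorem \ref{Seperation of Jets} directly yields $\epsilon(K_{\Xb},x)\ge 2n$ for $x\in X$; the stated bound on $\sy$ is chosen to make this work.

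Next I would handle the boundary: for $x$ on a component $D_i$, the adjunction formula gives $K_{\Xb|D_i}\cong O_{D_i}(-D_i)$, which is ample, and Lemma \ref{SeshOnD} gives $\epsilon(K_{\Xb|D_i},x)\ge \frac{\pi}{4}\operatorname{sys}(D)^2$. The hypothesis $\operatorname{sys}(D)>2\sqrt{2n/\pi}$ is precisely $\frac{\pi}{4}\operatorname{sys}(D)^2>2n$, so $\epsilon(K_{\Xb|D_i},x)\ge 2n$. To convert this into a bound for $\epsilon(K_{\Xb},x)$ itself (rather than the restriction), I would invoke the general comparison $\epsilon(L,x)\ge \min\{\epsilon(L|_Z,x),\ \text{something involving }L\text{ transverse to }Z\}$ — more precisely, since $K_{\Xb}$ is ample (Theorem \ref{Tsi}, using $d>4\pi$ from \ref{depth Bounds 2}) and $K_{\Xb|D_i}$ is ample, and since curves through $x\in D_i$ either lie in $D_i$ (handled by Lemma \ref{SeshOnD}) or meet $D_i$ properly, I would bound $\frac{K_{\Xb}\cdot C}{\operatorname{mult}_x C}$ from below in the second case using Theorem \ref{LargeIntersection} applied to $C$, getting a number far exceeding $2n$. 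Combining the two cases gives $\epsilon(K_{\Xb},x)\ge 2n$ for $x\in D$ as well.

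Finally, with $\epsilon(K_{\Xb},x)\ge 2n = 2\dim\Xb$ at every point $x\in\Xb$, I would apply the standard very-ampleness criterion (e.g. \cite[Proposition 2.2.5 and the discussion of very ampleness via Seshadri constants]{bauer2009primer}, or Demailly's jet-separation argument): if $\epsilon(L,x)>2n$ for all $x$ then $K_{\Xb}+L$ — here $2K_{\Xb}$ — separates $1$-jets at every point and separates points, hence is very ample. One subtlety is that the criterion typically needs strict inequality $\epsilon>2n$ at every point or separation of $2$-jets; I would absorb this by noting the interior bound $\frac{1}{4\pi}e^{\sy/16}$ is strictly larger than $2n$ under the stated hypothesis, and on $D$ the hypothesis $\operatorname{sys}(D)>2\sqrt{2n/\pi}$ is strict, so all inequalities are strict.

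The main obstacle I anticipate is the boundary-to-ambient comparison of Seshadri constants in the second paragraph: controlling $\epsilon(K_{\Xb},x)$ at a point $x\in D$ from the restriction $\epsilon(K_{\Xb|D_i},x)$ requires handling curves through $x$ that are not contained in $D_i$, and for these one must use the global degree bound of Theorem \ref{LargeIntersection} together with a bound on $\operatorname{mult}_x$; making this uniform and clean — rather than ad hoc — is the delicate point. The interior estimate, by contrast, is essentially a repackaging of Theorem \ref{Seperation of Jets} / Corollary \ref{Seshadri cons locus} with the constants in the hypothesis on $\sy$ tuned so that the threshold $2n$ is reached.
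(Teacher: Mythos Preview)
Your overall architecture matches the paper's: split into the cases $x\in X$; $x\in D$ with $C\subset D$; and $x\in D$ with $C\not\subset D$, then invoke Demailly's criterion. For $x\in X$ you meander through Corollary~\ref{Seshadri cons locus} before landing on the right tool; the paper goes straight to Theorem~\ref{Seperation of Jets} with $s$ chosen so that $n+s=5n$ (matching the hypothesis $5n(1+2n+n!)$), which gives $\epsilon(K_{\Xb},x)\ge s/2=2n$ pointwise on $X$ with no Zariski-open caveat. For $x\in D$ and $C\subset D_i$ your use of Lemma~\ref{SeshOnD} is exactly what the paper does.

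The genuine gap is the case you yourself flag: $x\in D$, $C\not\subset D$. Your proposal to use Theorem~\ref{LargeIntersection} gives a large lower bound for $K_{\Xb}\cdot C$, but no control whatsoever on $\operatorname{mult}_x(C)$, so the ratio $\frac{K_{\Xb}\cdot C}{\operatorname{mult}_x(C)}$ is not bounded by this alone, and there is no auxiliary multiplicity bound available from the results in the paper. The paper closes this gap by a different and much simpler mechanism: since $C$ meets $D$ properly at $x$, one has $D\cdot C\ge \operatorname{mult}_x(C)$ by local intersection theory, and Theorem~\ref{Tsi} (with $d\ge 8\pi$ from \eqref{depth Bounds 2}) says $K_{\Xb}+(1-\lambda)D$ is ample for $\lambda$ up to $(n+1)d/4\pi$, hence
\[
K_{\Xb}\cdot C \;\ge\; \Bigl(\tfrac{(n+1)d}{4\pi}-1\Bigr)\, D\cdot C \;\ge\; \tfrac{nd}{4\pi}\,\operatorname{mult}_x(C)\;\ge\; 2n\,\operatorname{mult}_x(C).
\]
So the missing idea is not a degree bound on $C$ but rather comparing $K_{\Xb}\cdot C$ to $D\cdot C$ via Bakker--Tsimerman, and then using that $D$ itself detects the multiplicity at $x$.
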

\begin{proof}
    Let $C\subset \Xb$ be a connected curve passing through a point $x\in \Xb.$ We consider three cases:
\begin{enumerate}
   
    \item $x\in D$ and $C$ fully contained in a $D:$ Let $D_i$ be the connected component of $D$ which contains $x.$ 
      Lemma \ref{SeshOnD} implies that 
    $$K_{\Xb} \cdot C
    =
    K_{\Xb|D_i} \cdot C  
    \ge
    \frac{\pi}{4} \operatorname{sys}(D)^2 \cdot \operatorname{mult}_{x}(C)
    \ge 
    2n\cdot \operatorname{mult}_{x}(C)
    .$$
    
    \item $x\in D$ and $C$ is not contained in $D:$ Plugging in the bound on the systole  in Theorem \ref{systolDepth} gives that the uniform depth of cusps $d$ is at least $8\pi.$ By the theorem of Bakker-Tsimerman, Theorem \ref{Tsi},
    the line bundle $K_{\Xb}+(1-\lambda)D$ is ample for $\lambda \in (0, (n+1)d/4\pi)$. Hence, we can write
    \begin{align*}
        K_{\Xb} \cdot C 
        \ge
        (\frac{(n+1)d}{4\pi}-1) D \cdot C
        \ge 
         \frac{nd}{4\pi} \operatorname{mult}_{x}(C)
        \ge2n\operatorname{mult}_{x}(C),
    \end{align*}  
because $d\ge 8\pi$ by Corollary \ref{depth Bounds 2}.

    \item $x\in X:$ For this case we will use Theorem \ref{Seperation of Jets}. Plugging in $s=2n$ to this theorem gives 
   $K_{\Xb} \cdot C \ge 2n \cdot \operatorname{mult}_{x}(C).$
    Hence, for every $x\in X$ we get that $\epsilon(K_{\Xb},x)\ge 2n.$ Combining this with Demailly's theorem \cite[Proposition 6.8).]{damailly1992singular} implies that $2K_{\Xb}$ is very ample.  
\end{enumerate}
\end{proof}
\section{Sparsity of Rational Points}
The goal of this section is to prove Corollary \ref{sparsity int}. The proof is based on Theorem \ref{Hyperbolic Vol Int}, Theorem \ref{LargeIntersection Int}, and the fundamental idea of Bombieri-Pila, known as the determinant method. 
Let $F$ be a number field with ring of integers $\mathcal{O}_F$ and set of places $M_F$.  
For each place $v\in M_F$ let $|\cdot|_v$ denote the
standard normalized absolute value on $F_v$, so that the product formula
\[
\prod_{v\in M_F} |a|_v = 1, \qquad \text{for all } a\in F^\times,
\]
holds.  
Concretely:
\begin{itemize}
  \item If $v$ is non-archimedean corresponding to a prime ideal $\mathfrak p\subset \mathcal{O}_F$, set
  \[
  |a|_v \;:=\; N(\mathfrak{p})^{-\operatorname{ord}_{\mathfrak{p}}(a)},
  \qquad a\in F^\times,
  \]
  where $N(\mathfrak{p}) = |\mathcal{O}_F/\mathfrak{p}|$ is the absolute norm of $\mathfrak{p}$.
  \item If $v$ is archimedean, arising from an embedding $\sigma:F\hookrightarrow\mathbb{R}$ or
  $\sigma:F\hookrightarrow\mathbb{C}$, set
  \[
  |a|_v := |\sigma(a)|, \qquad a\in F^\times.
  \]
  In the complex case, we include both $\sigma$ and its conjugate $\overline{\sigma}$ as distinct places.
\end{itemize}

The multiplicative projective height of a point $x=[x_0:\cdots:x_N]\in \mathbb{P}_F^N$ is defined as
\begin{equation}\label{classical multiplicative projective height}
    \operatorname{H}(x) \;=\; \prod_{v\in M_F} \max_{0\le i\le N} |x_i|_v.
\end{equation}
The product formula ensures that this definition is independent of the choice of homogeneous coordinates,
that is, scaling the representative vector $(x_0,\dots,x_N)$ by any $\lambda\in F^\times$ leaves the height unchanged.

We will use the following recent result of Maculan-Brunebarbe \cite{brunebarbe2022counting}, obtained by applying the determinant method inductively:
\begin{theorem}(\cite[Theorem 4.4]{brunebarbe2022counting})\label{counting}
    Let $Z$ be a closed subvariety of $\mathbb{P}_{F}^{N},$ let $\epsilon>0$ be a real number, let $n\ge 0$ and $e\ge 1$ be integers. 

Then, there is a real number $C=c(n,e, N, F,D,\epsilon)$ with the following property:
For an integral $n$-dimensional closed subvariety $Y$ of $\Proj^{N}$
 of degree $\le e$ such that each positive-dimensional integral closed subvariety in $Y$ not contained in $Z$ has degree $\ge d^{\operatorname{dim}(Y)}$ for some integer $d \ge 1$, and a real number $B > [F : \Q]\epsilon$, the following
inequality holds:
$$
\#\{
x\in Y(F)\setminus Z \mid \operatorname{H}(x)\le B 
\}
\le
C B^{(1+\epsilon)[F:\Q]n(n+3)/d}.$$
\end{theorem}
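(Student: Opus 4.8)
I would prove the estimate by the determinant method of Bombieri--Pila and Heath-Brown, in the projective ``global'' form of Salberger, run over the number field $F$ in the style of Broberg, and organised as an induction on $n=\dim Y$. One may assume $Y\not\subseteq D$, since otherwise $Y(F)\setminus D=\varnothing$ and there is nothing to prove. Applying the degree hypothesis to $Y$ itself (a positive-dimensional subvariety of $Y$ not contained in $D$, provided $n\ge 1$) then gives $d^{\,n}\le\deg Y\le e$, so that $d\le e^{1/n}$. This inequality is the crucial bookkeeping fact of the whole argument: it forces every auxiliary degree that appears below to be bounded in terms of $e$ alone, which is what keeps the final error constant $C$ free of any dependence on $d$. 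The base case $n=0$ is a single point and is absorbed into $C$; so assume $n\ge 1$ and that the statement holds in all dimensions $<n$.

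\textbf{Set-up of the count and the determinant step.} Rather than restricting scalars, I would work directly with the Weil height over $F$: an $F$-point of $\Proj^N$ of height $\le B$ is represented, up to units and bounded denominators, by a primitive vector of archimedean size $\ll B$ in the rank-$[F:\Q](N+1)$ lattice $\mathcal O_F^{N+1}\subset\R^{[F:\Q](N+1)}$; reduction modulo a prime $\mathfrak p$ of $F$ of norm $q$ sends the height-$\le B$ points of $Y$ into $\ll_{n,N}q^{[F:\Q]n}$ residue classes, and this is the source of the factor $[F:\Q]$ in the final exponent. The core assertion, for each fixed integer $\delta\ge 1$, is that $\{x\in Y(F)\setminus D:H(x)\le B\}$ lies in the union of the zero loci of at most
\[
c_1(n,N,[F:\Q],\delta,\epsilon)\;B^{\,(1+\epsilon)[F:\Q]\,c_n/\delta^{1/n}}
\]
hypersurfaces of degree $\delta$, none containing $Y$, where $c_n$ is the (known) numerical output of the global determinant method. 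The mechanism is the standard one: choose by a Siegel-lemma argument a family of primes of $F$ of controlled norm; for a set of points sharing all residue classes, the determinant of the matrix of all degree-$\delta$ monomials evaluated at a maximal independent subset is divisible by a large power of each modulus yet archimedean-small because the coordinates are $\ll B$, so the product formula forces it to vanish, producing a common relation of degree $\delta$ which one arranges to be nontrivial on $Y$ using $\dim H^0(Y,\mathcal O(\delta))\asymp\delta^{\,n}\deg Y$. The hypothesis $B>[F:\Q]\epsilon$ is exactly what guarantees $B$ is large enough to run this divisibility argument.

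\textbf{The dimension induction and the role of $d$.} Fix one such hypersurface $Z$ ($\deg Z=\delta$, $Z\not\supseteq Y$). Then $Y\cap Z$ has pure dimension $n-1$ and, by B\'ezout, total degree $\le e\delta$; by the degree hypothesis each irreducible component $W$ of $Y\cap Z$ not contained in $D$ has $\deg W\ge d^{\,n-1}$, so there are at most $e\delta/d^{\,n-1}$ such components, each integral of dimension $n-1$ and degree $\le e\delta$. Crucially, the degree hypothesis is inherited by every such $W$ with the \emph{same} $d$ (a positive-dimensional subvariety of $W$ not contained in $D$ is one of $Y$), so the inductive hypothesis applies to each $W$ with $e$ replaced by $e\delta$. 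Summing over the components $W$ and over the hypersurfaces $Z$, and discarding at every stage the points lying in $D$, one gets a bound in which the exponent of $B$ is
\[
(1+\epsilon)[F:\Q]\Bigl(\tfrac{c_n}{\delta^{1/n}}+\tfrac{(n-1)(n+2)}{d}\Bigr).
\]
Choosing $\delta$ to be a bounded power of $d$ (legitimate because $d\le e^{1/n}$, so $\delta\le c(n,e)$ and every accumulated constant depends only on $n,e,N,F,D,\epsilon$) makes the first term $\le 2(n+1)(1+\epsilon)[F:\Q]/d$; since $(n-1)(n+2)+2(n+1)=n(n+3)$, the total exponent becomes $(1+\epsilon)[F:\Q]\,n(n+3)/d$ after a harmless readjustment of $\epsilon$. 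Unwinding the $n$ levels of recursion and absorbing the finitely many $d$-free constants into $C$ completes the induction.

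\textbf{Main obstacle.} The genuinely delicate points are (i) making the determinant method uniform over an arbitrary number field with explicit enough control of the implied constants in $\delta$, $B$ and $F$, and in particular arranging that the auxiliary relations are nontrivial on $Y$ rather than vanishing identically on it; and (ii) the bookkeeping across the recursion that confirms both that the exponent is exactly $n(n+3)/d$ and that $C$ is independent of $d$ — the latter resting entirely on the observation $d\le e^{1/n}$, which bounds all auxiliary degrees in terms of $e$.
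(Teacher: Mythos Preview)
The paper does not prove this theorem at all: it is quoted verbatim as \cite[Theorem~3.4]{brunebarbe2022counting} and used as a black box in the proof of Corollary~\ref{sparsity}. There is therefore no ``paper's own proof'' to compare your proposal against.

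That said, your outline is the standard one for results of this type and is essentially the strategy of the cited source: the global determinant method over a number field (Broberg's extension of Heath-Brown and Salberger) to cover the bounded-height points by auxiliary hypersurfaces, followed by a dimension induction using B\'ezout and the inherited lower bound on the degree of subvarieties. Your observation that $d^{\,n}\le\deg Y\le e$, hence $d\le e^{1/n}$, is exactly the mechanism that keeps all auxiliary degrees and the constant $C$ independent of $d$, and your telescoping of the exponent via $(n-1)(n+2)+2(n+1)=n(n+3)$ is the correct bookkeeping. The genuinely nontrivial technical work you flag under point~(i) --- uniformity of the determinant method over $F$ and ensuring the auxiliary forms do not vanish identically on $Y$ --- is precisely where the substance of the Brunebarbe--Maculan paper lies, so a full proof would require either reproducing or citing that input.
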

\begin{remark}
    There is a typo in the statement of Theorem \ref{counting} in the original paper. However, from its applications and the surrounding statements in that paper, it is clear that the power of \(d\) should be the dimension of the subvariety $Y,$ not $Z.$
\end{remark}
On $\mathbb{P}^N_F$ we fix the standard adelic metric on $\mathcal{O}_{\mathbb{P}^N}(1)$, defined as follows:  
for a local section $s$ given by a homogeneous linear polynomial and a point $x=[x_0:\cdots:x_N]\in \mathbb{P}_{F}^N$,
\[
\|s(x)\|_{v} \;=\; \frac{|s(x)|_v}{\max_{0\le i\le N}|x_i|_v}, \qquad v\in M_F.
\]

Let $\overline{Y}$ be a smooth projective variety over $F$, and let $L$ be a line bundle on $\overline{Y}$.  
Suppose there exists $b\ge 1$ such that the complete linear system $|bL|$ is base-point free.  
Choosing a basis of $H^0(\overline{Y},bL)$ defines the morphism
\[
\varphi_{|bL|} : \overline{Y} \longrightarrow \mathbb{P}^N_F.
\]
Pulling back the adelic metric on $\mathcal{O}_{\mathbb{P}^N}(1)$ gives a metric on $bL$, and we obtain a metric on $L$ by taking the $b$-th root fiberwise:  
for a local section $t$ of $L$ and a point $x\in \overline{Y}$,
\[
\|t(x)\|_{L,v} \;:=\; 
\Big(\,\|\,t^{\otimes b}(x)\,\|_{\varphi_{|bL|}^*(\mathcal{O}(1)),\,v}\,\Big)^{1/b}.
\]
The multiplicative height on $\overline{Y}(F)$ is 
\begin{align}\label{Height def}
    \operatorname{H}_L(x) \;=\; \prod_{v\in M_F} \|t(x)\|_{L,v}^{-1}.
\end{align}

\begin{remark}
\leavevmode
\begin{enumerate}
\item\label{it:indep-section}
\textbf{Independence from the choice of local section.}
Let $x\in \overline{Y}(F)$ and let $s,t$ be two local sections of $L$ defined in a neighborhood of $x$
with $s(x),t(x)\neq 0$. Since $L$ has rank $1$, there exists a rational function $f\in F(\overline{Y})^\times$
defined near $x$ such that $t = f\cdot s$. For each place $v$ we have that
\[
\|t(x)\|_{L,v} \;=\; \|f(x)\cdot s(x)\|_{L,v} \;=\; |f(x)|_v \, \|s(x)\|_{L,v}.
\]
Therefore,
\[
\prod_{v\in M_F} \|t(x)\|_{L,v}^{-1}
\;=\; \Big(\prod_{v\in M_F} |f(x)|_v^{-1}\Big)\; \Big(\prod_{v\in M_F} \|s(x)\|_{L,v}^{-1}\Big).
\]
Since $f(x)\in F^\times$, the product formula gives $\prod_{v\in M_F} |f(x)|_v = 1$, hence
\[
\prod_{v\in M_F} \|t(x)\|_{L,v}^{-1} \;=\; \prod_{v\in M_F} \|s(x)\|_{L,v}^{-1}.
\]
Thus $\operatorname{H}_L(x)(x)$ is independent of the chosen local section.

\item\label{it:recovers-projective}
\textbf{Recovery of the usual height on $\mathcal{O}_{\mathbb{P}_F^N}(1)$.}
Let $\overline{Y}=\mathbb{P}_F^N$ and $L=\mathcal{O}_{\mathbb{P}^N}(1)$.  
For $x=[x_0:\cdots:x_N]\in \mathbb{P}_F^N$, choose an index $j$ with $x_j\neq 0$ and take the section $s = X_j$ (the $j$-th coordinate function), which does not vanish at $x$.
By definition of the metric at every place $v$,
\[
\|s(x)\|_{v} \;=\; \frac{|X_j(x)|_v}{\max_{i}|x_i|_v} \;=\; \frac{|x_j|_v}{\max_{i}|x_i|_v}.
\]
Hence
\[
\operatorname{H}_{L}(x)
\;=\; \prod_{v\in M_F} \|s(x)\|_{v}^{-1}
\;=\; \prod_{v\in M_F} \frac{\max_i |x_i|_v}{|x_j|_v}
\;=\; \Big(\prod_{v\in M_F} \max_i |x_i|_v\Big)\cdot
      \Big(\prod_{v\in M_F} |x_j|_v^{-1}\Big).
\]
By the product formula $\prod_{v} |x_j|_v = 1$, so
\[
\operatorname{H}_{L}(x) \;=\; \prod_{v\in M_F} \max_{0\le i\le N} |x_i|_v,
\]
which is exactly the classical multiplicative projective height \eqref{classical multiplicative projective height}.

\item \textbf{Tensor powers.}
If $a\ge 1$, then the induced metric on $aL$ is obtained by taking the $a$-th power of the norm, so for every $x\in \overline{Y}(F)$,
\[
\|t^{\otimes a}(x)\|_{aL,v} = \|t(x)\|_{L,v}^a,
\]
for every local section $t$ of $L$. Therefore
\[
\operatorname{H}_{aL}(x)=\operatorname{H}_{L}(x)^a.
\]
\end{enumerate}
\end{remark}

Now, combining our effective estimates (Corollary \ref{Deg of log} and Theorem \ref{LargeIntersection}) on the degree of the subvarieties with Theorem \ref{counting} we can conclude the following:
\begin{Corollary}\label{sparsity} Suppose $\Xb$ is defined on the number field $F.$ Let $\epsilon$ be a positive number and $B$ any number such that $B\ge \epsilon[F:\Q].$
\begin{enumerate} 

    \item Let $L_1=K_{\Xb}+D.$ Then, there exists a constant $c_1$ depending on $X, F$ and $\epsilon$ such that:
\begin{align*}
         \#\Big\{
x\in X(F) \mid \operatorname{H}_{L_1}(x)\le B\Big\} \le c_1 B^{\delta},
     \end{align*}
where $$\delta=\frac{[F:\Q] n(n+3)}{ \sinh^{2}\bi(\sy /2\bi) (n+1) }(1+\epsilon),$$ 
and $H_{L_1}$ is the multiplicative height. 

\item Let $L_2=K_{\Xb}$ and assume that $\sy \ge 4\ln\bi(5n+(4\pi)^4\bi).$ Then, there exists a constant $c_2$ depending on $X, F$ and $\epsilon$ such that
\begin{align*}
         \#\Big\{
x\in X(F) \mid \operatorname{H}_{L_2}(x)\le B\Big\} \le c_2 B^{\delta},
     \end{align*}
where $$\delta=\frac{4\pi [F:\Q] (n+3)}{ e^{\sy/16}}(1+\epsilon),$$ 
and $H_{L_2}$ is the multiplicative height. 

\end{enumerate}
        
\end{Corollary}
\begin{proof}
\begin{enumerate}
    \item By the main Theorem of \cite{MOK}, there exists $b$ such that $bL_1$ is base-point free on $\Xb$ and it embeds $X$ into some projective space $\Proj^N$ such that each connected component of $D$ collapses to an isolated point. Let $Z$ be the union of these isolated points in $\Proj^N$.  Applying Corollary \ref{Deg of log} implies that for every subvariety $V$ of $\Xb$ not contained in $D$ one has:
      $$
        ((bL_1)^m \cdot V) ^{1/m} \ge b (n+1)\sinh^{2}\bi(\sy /2\bi),
     $$
where $m$ is the dimension of $V.$ Hence, applying Theorem \ref{counting} gives us that:
     \begin{align*}
         \#\{
x\in X(F) \mid \operatorname{H}_{bL_1}(x)\le B\} \le C B^{ [F:\Q] n(n+3)(1+\epsilon)/(n+1)s},
     \end{align*}
     where $s= \sinh^2(\sy/2)$ and $C$ is constant depending on $X, F$ and $\epsilon$ (Note that $N, n$ and $e$ are fixed when we fixed $\Xb$ and $bL_1.$ Also, the toroidal compactification is unique for a ball quotient, therefore all of these data only depend on $X$). To conclude, note that $\operatorname{H}_{L_1}(x)\le B$ if and only if $\operatorname{H}_{bL_1}(x)\le B^b.$ Therefore replacing $B$ with $B^b$ implies the claim.

    \item  We will proceed similar to the previous part, the only difference is that we use the embedding with multiple of $L_2$ instead of $L_1.$ With the bound on the systole, Theorem \ref{LargeIntersection} tells us that $L$ is an ample bundle as it has positive intersection with all subvarieties. Let $b$ be an integer such that $bL_2$ is very ample. Now, we can embed $\Xb$ into some projective space $\Proj^N$ by $bL_2.$ Applying Theorem \ref{LargeIntersection} gives us that for every subvariety of $\Xb$ not contained in $D$ one has:
     $$
        ((bL_2)^m \cdot V) ^{1/m} \ge \bi(\frac{nb}{4\pi}\bi) e^{\sy/16}.
     $$

     Applying Theorem \ref{counting} gives us that:
     \begin{align*}
         \#\{
x\in X(F) \mid \operatorname{H}_{bL_2}(x)\le B\} \le C B^{4\pi [F:\Q] (n+3)(1+\epsilon)/bs},
     \end{align*}
where $s= e^{\sy/16},$ and $C$ is constant depending on $X, F$ and $\epsilon.$ Similar to the previous part we can conclude the desired inequality.

\end{enumerate}
\end{proof}

\bibliographystyle{alpha}
\bibliography{Ref}

\end{document}